\newcommand{\ord}{\operatorname{ord}}
\newcommand{\Q}{{\mathbb Q}}
\newtheorem{Proposition}{Proposition}[section]
\theoremstyle{definition}
\newtheorem{Remark}[Proposition]{Remark}
\newcommand{\z}{\mathbb{Z}}
\newcommand{\q}{\mathbb{Q}}
\newtheorem{theorem}{Theorem}[section]
\newtheorem{corollary}[theorem]{Corollary}
\newtheorem{lemma}[theorem]{Lemma}
\newtheorem{definition}[theorem]{Definition}
\newtheorem{proposition}[theorem]{Proposition}
\newtheorem{example}[theorem]{Example}
\newtheorem{question}[theorem]{Question}
\begin{document}

\title{Eventual Stability of pure polynomials over the rational field}
\author[M. O. Darwish]%
{Mohamed O. Darwish}
\address{Faculty of Engineering and Natural Sciences, Sabanc{\i} University, Tuzla, \.{I}stanbul, 34956 Turkey}
\email{mdarwish@sabanciuniv.edu}
\author[M. Sadek]%
{Mohammad~Sadek}
\email{mohammad.sadek@sabanciuniv.edu}

\maketitle

\let\thefootnote\relax\footnote{\textbf{Mathematics Subject Classification:} 37P05, 37P15, 37P20\\

\textbf{Keywords:} dynamically irreducible polynomials, eventually stable polynomials, pure polynomials}

\begin{abstract}
A polynomial with rational coefficients is said to be {\em pure} with respect to a rational prime $p$ if its Newton polygon has one slope. In this article, we prove that the number of irreducible factors of the $n$-th iterate of a pure polynomial over the rational field $\q$ is bounded independent of $n$. In other words, we show that pure polynomials are {\em eventually stable}. Consequently, several eventual stability results available in literature follow; including the eventual stability of the polynomial $x^d+c\in\q[x]$, where $c\ne 0,1$, is not a reciprocal of an integer. In addition, we establish the dynamical irreducibility, i.e., the irreducibility of all iterates, of a subfamily of pure polynomials, namely Dumas polynomials with respect to a rational prime $p$ under a mild condition on the degree. This provides iterative techniques to produce irreducible polynomials in $\q[x]$ by composing pure polynomials of different degrees. 
During the course of this work, we 
 characterize all polynomials whose degrees are large enough that are not pure, yet they possess pure iterates. This implies the existence of polynomials in $\z[x]$ whose shifts are all dynamically irreducible.    
\end{abstract}

\section{Introduction}
Let $S$ be a set of polynomials defined over a field $K$. An interesting question is whether one can construct an irreducible polynomial over $K$ using polynomials in $S$. For example, over the rationals, Hilbert's Irreducibility Theorem ensures that there exists infinitely many $c\in \q$ such that $f(x)+c$ is irreducible for any polynomial $f$ with rational coefficients. In fact, if $f$ and $g$ are polynomials in $K[x]$ with $\alpha$ being a root of $f$ in the algebraic closure of $K$, then Capelli's Lemma, \cite[Lemma 0.1]{Capelli}, asserts that
$f\circ g$ is irreducible over $K$ if and only if $f$ is irreducible over $K$ and
$g(x) - \alpha$ is irreducible over $K(\alpha)$.

A modern approach toward the question is to iteratively construct such irreducible polynomials. More precisely, we set $S$ to consist of one irreducible polynomial $f$ in $K[x]$, then we study the irreducibility of the polynomials $f\circ f, f\circ f\circ f,\ldots $. This justifies our interest in the systems introduced in the following definition.
\begin{definition}\cite{Benedetto2019}
  A (discrete) {\em dynamical system} $(A,\phi)$ is a set $A$ together with a self-map $\phi : A \rightarrow A$. The $n$th-iterate of the map $\phi$ is defined by
\begin{equation*}
\phi^n:=\underbrace{\phi \circ \phi\ldots \circ \phi}_\text{$n$-times}    
\end{equation*}
Conventionally, $\phi^0$ is the identity map on $A$.
\end{definition}
In this article, our main interest lies in polynomial maps defined over the rational field $\q$. In particular, we focus on the irreducibility of iterations of a polynomial map. For an irreducible polynomial $f\in \q[x]$, it is not always true that $f^n$ is irreducible for all $n\ge2$. Therefore, we introduce the following definition.
\begin{definition}\label{stable polynomials}\cite{ahmadi_luca_ostafe_shparlinski_2012}
Let $K$ be a field. The polynomial $f$ in $K[x]$ is  called {\em dynamically irreducible}, or stable, if all the iterates $f, f^2,\ldots, f^n,\ldots$ are irreducible over $K$.
\end{definition}
Odoni \cite{Odoni} was the first to establish the concept of dynamical irreducibility (the credit of the term \textit{stable} is attributed to him). In \cite[Lemma 2.2]{Odoni}, it was shown that for a prime ideal $P$ in an integral domain $R$, a $P$-Eisenstein polynomial in $R[x]$ is dynamically irreducible. In addition, he presented the first nontrivial example of a dynamically irreducible polynomial over $\q$, namely the polynomial $x^2-x+1$. The interested reader may consult \cite[Proposition 4.1]{odoni2} for a proof. Stoll \cite{Stoll1992} produced a dynamical irreducibility criterion for quadratic polynomials in $\q[x]$ of the form $f(x)=x^2+a$. This criterion is based on associating the sequence $c_1=-a$ and $c_{n+1}={c_n}^2+a=f^{n+1}(0)$, $n\geq 2$, to the iterations of the quadratic binomial. He proved that if this sequence contains no squares in $\z$, then $f$ is dynamically irreducible, see \cite[Corollary 1.3]{Stoll1992}. In addition, explicit families of dynamically irreducible polynomials of degree $2$ were exhibited in \cite{ahmadi_luca_ostafe_shparlinski_2012} and \cite{Jones}.
Jones \cite{Jones2011} generalized the aforementioned criterion of Stoll over any arbitrary field with Characteristic different from $2$ as follows.
\begin{proposition}\cite[Proposition 2.3]{Jones2011}\label{quadratic stable}
 Let $K$  be a field with characteristic not equal to $2$. Suppose the polynomial $f(x)=ax^2+bx+c \in  K[x]$ has a critical point $\gamma=\frac{-b}{2a}$. Then $f$ is dynamically irreducible over $K$ if $af^2(\gamma),af^3(\gamma),\ldots,af^n(\gamma),\ldots$ and $-af(\gamma)$ are all nonsquares in $K$.
\end{proposition}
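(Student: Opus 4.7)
The plan is to prove the proposition by induction on $n$, with the inductive step handled via Capelli's Lemma (recalled in the introduction) together with a norm computation that converts the hypothesis on $af^n(\gamma)$ into a nonsquare statement inside a tower of iterated quadratic extensions.

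For the base case $n=1$, I would write $f(x)=a(x-\gamma)^2+f(\gamma)$, so that the discriminant of $f$ equals $-4af(\gamma)$. Since $\Char(K)\neq 2$, the factor $4$ is a square, so $f$ is irreducible over $K$ if and only if $-af(\gamma)$ is a nonsquare in $K$; this is exactly the first hypothesis. For the inductive step, assume $f^{n-1}$ is irreducible for some $n\geq 2$. Writing $f^n=f^{n-1}\circ f$ and invoking Capelli's Lemma, it suffices to show that $f(x)-\beta$ is irreducible over $K(\beta)$, where $\beta$ is a root of $f^{n-1}$. The quadratic $f(x)-\beta$ has discriminant $4a(\beta-f(\gamma))$, so the task reduces to showing $a(\beta-f(\gamma))$ is not a square in $K(\beta)$.

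Here is the decisive step: since the norm of a square is a square, it is enough to show that $N_{K(\beta)/K}\bigl(a(\beta-f(\gamma))\bigr)$ is a nonsquare in $K$, and a direct computation gives
\[
N_{K(\beta)/K}\bigl(a(\beta-f(\gamma))\bigr)=a^{2^{n-1}}\prod_{i}(\beta_i-f(\gamma))=a^{2^{n-1}}\cdot\frac{f^{n-1}(f(\gamma))}{a^{2^{n-1}-1}}=a\,f^n(\gamma),
\]
where the $\beta_i$ are the conjugates of $\beta$, the leading coefficient of $f^{n-1}$ is $a^{2^{n-1}-1}$, and the sign $(-1)^{\deg f^{n-1}}$ is $+1$ because $2^{n-1}$ is even once $n\geq 2$. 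By hypothesis $af^n(\gamma)$ is a nonsquare in $K$, so $a(\beta-f(\gamma))$ cannot be a square in $K(\beta)$, closing the induction. The only place where care is needed is the bookkeeping of the leading coefficient $a^{2^{n-1}-1}$ of the iterate and the parity of its degree, but nothing here is a genuine obstacle beyond keeping track of powers of $a$.
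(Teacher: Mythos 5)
Your argument is correct and is essentially the proof given in the cited source (Jones's Proposition 2.3, following Stoll): the paper itself does not reprove this statement but only cites it, and your induction via Capelli's Lemma plus the norm computation $N_{K(\beta)/K}\bigl(a(\beta-f(\gamma))\bigr)=af^n(\gamma)$ is exactly the standard route. The bookkeeping of the leading coefficient $a^{2^{n-1}-1}$ and the sign $(-1)^{2^{n-1}}$ checks out, and separability of $K(\beta)/K$ (needed for the norm to be the product of conjugates) holds automatically since the degree is a power of $2$ and $\Char(K)\neq 2$.
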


In a different direction, Danielson and Fein \cite{Danielson2001} extended Stoll's result for any binomial of the form $x^n-b$ over some specific rings. They were able to deduce the dynamical irreducibility of such a polynomial from the irreducibility of the first iterate, see \cite[Corollary 5]{Danielson2001}. Moreover,  Ali \cite[Corollary 1]{NidalAli} proved that $p^r$-Eisenstein polynomials are dynamically irreducible over $\q$, where $p^r$-Eisenstein polynomials are defined as follows. 
\begin{definition}\cite[Definition 5]{NidalAli}\label{p^r-Eisenstein polynomials}
Let $f(x)=a_dx^d+\ldots+a_0\in \q[x]$. We say $f$ is {\em $p^r$-Eisenstein} if there is a prime $p$ and an integer $r\geq 1$ such that 
\begin{itemize}
    \item[i)] $\nu_p(a_d)=0$
    \item[ii)] $\nu_p(a_i)\geq r$ for all $1 \leq i\leq d-1$
    \item[iii)] $\nu_p(a_0)=r$
    \item[iv)] $\operatorname{gcd}(r,d)=1$
\end{itemize}
\end{definition}
In fact, $p^r$-Eisenstein polynomials constitute a subfamily of $p^r$-Dumas polynomials, see Definition \ref{def: dumas criterion}. A $p^r$-Dumas polynomial is an irreducible polynomial whose Newton polygon with respect to the prime $p$ consists of exactly one line segment. Following a careful analysis of the Newton polygon of such polynomials, we prove the following fact, see Corollary \ref{cor: dumas polynomials are dynamically irreducible}.
\begin{corollary}
Let $f$ and $g$ be $p^r$-Dumas polynomials in $\q[x]$, then $f\circ g$ is $p^r$-Dumas. In particular, a $p^r$-Dumas polynomial is dynamically irreducible over $\q$. 
\end{corollary}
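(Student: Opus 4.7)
My plan is to analyze the Newton polygon of $f\circ g$ by tracking the $p$-adic valuations of its roots in $\overline{\q_p}$. Recall that a $p^r$-Dumas polynomial of degree $d$ has Newton polygon equal to the single segment from $(0,r)$ to $(d,0)$, so every root in $\overline{\q_p}$ has valuation $r/d$; irreducibility via Dumas's theorem is secured by the condition $\gcd(r,d)=1$. Given $p^r$-Dumas polynomials $f$ of degree $d$ and $g(x)=\sum b_j x^j$ of degree $e$, I aim to pin down both endpoints and the slope of the Newton polygon of $f\circ g$, and then verify the gcd condition for Dumas's criterion.

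The heart of the argument is a Newton-polygon computation for $g(x)-\alpha$, where $\alpha$ is any root of $f$ in $\overline{\q_p}$. Since $\nu_p(\alpha)=r/d<r=\nu_p(b_0)$ when $d\ge 2$, the constant term $b_0-\alpha$ has valuation exactly $r/d$; the leading term still has valuation $0$; and for each interior $b_j$ the Newton polygon of $g$ forces $\nu_p(b_j)\ge r(e-j)/e$, which is itself bounded below by $r(e-j)/(de)$, namely the line joining $(0,r/d)$ to $(e,0)$. Hence the Newton polygon of $g(x)-\alpha$ is a single segment of slope $-r/(de)$, so every $\beta$ with $g(\beta)=\alpha$ satisfies $\nu_p(\beta)=r/(de)$. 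Letting $\alpha$ range over the $d$ roots of $f$ accounts for all $de$ roots of $f\circ g$, each with valuation exactly $r/(de)$.

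It follows that the Newton polygon of $f\circ g$ is itself a single segment: its leading coefficient $a_d b_e^d$ has valuation $0$, and its constant term $f(b_0)=a_d\prod_j(b_0-\alpha_j)$ has valuation $d\cdot(r/d)=r$, so the segment runs from $(0,r)$ to $(de,0)$. Since $\gcd(r,d)=\gcd(r,e)=1$ forces $\gcd(r,de)=1$, Dumas's criterion yields irreducibility of $f\circ g$ over $\q_p$, and hence over $\q$; thus $f\circ g$ is $p^r$-Dumas. The dynamical irreducibility statement then follows by a straightforward induction on $n$ using $f^{n+1}=f\circ f^n$. The step I expect to require the most care is the Newton-polygon comparison for $g(x)-\alpha$, where one must verify that the polygon of $g$, pushed upward at the constant coefficient to height $r/d$, still lies on or above the steeper line through $(0,r/d)$ and $(e,0)$; the potential edge case $d=1$ (where $\nu_p(\alpha)=\nu_p(b_0)$ allows cancellation in $b_0-\alpha$) is harmless since linear polynomials are trivially dynamically irreducible.
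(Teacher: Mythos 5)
Your argument is correct, but it follows a genuinely different route from the paper's. The paper obtains the corollary as an immediate consequence of Theorem~\ref{thm: the_composition_of _two_pure_polynomials_is_pure}, whose proof (via Lemma~\ref{lem: f(c)_where_f_is_pure} and the Purity Lemma~\ref{lem: purity_lemma}) is an entirely coefficient-level computation: every monomial occurring in the expansion of $a_i\left(b_ex^e+\ldots+b_0\right)^i$ is checked to satisfy condition iii) of Definition~\ref{def: pure polynomials}. That argument never leaves $\q$ and proves the stronger closure statement for $p^r$-\emph{pure} (possibly reducible) polynomials, which is what the eventual-stability results later require. You instead pass to $\overline{\q_p}$, factor $f(g(x))=a_d\prod_j\bigl(g(x)-\alpha_j\bigr)$, and read off root valuations from Newton polygons; this is shorter and more conceptual, and since irreducibility enters only at the final appeal to Dumas's criterion, the same computation would in fact also reprove the purity of $f\circ g$ for pure $f,g$. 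One caveat, which you partially anticipate: your key step $\nu_p(b_0-\alpha)=r/d$ uses $r/d<r$, i.e.\ $\deg f\ge 2$; this matches the hypothesis $\deg(f)>1$ in Theorem~\ref{thm: the_composition_of _two_pure_polynomials_is_pure}, and the restriction is genuinely needed for the composition claim, since for $f(x)=x+p$ and $g(x)=x^2-p$ (both $p$-Dumas) one gets $f\circ g=x^2$. As you observe, this does not affect the dynamical irreducibility of a single $p^r$-Dumas polynomial.
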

This provides a variety of examples of dynamically irreducible polynomials different from the quadratic and binomial dynamically irreducible polynomials available in literature.  Inspired by Odoni's observation in \cite[Lemma 1.2]{Odoni} that a polynomial with a dynamically irreducible iterate is itself dynamically irreducible, we fully characterize polynomials $f$ that possess a $p^r$-Dumas iterate in the following corollary, see Corollary \ref{cor: Characterization_of_eventually_Dumas} for the proof.
\begin{corollary}
Let $r\ge 1$ be an integer and $p$ be a rational prime. 
Let $f(x)=a_dx^d+\ldots+a_0 \in \q[x]$, $d>r$, be such that $f$ is not $p^r$-Dumas. There is an integer $n\ge 2$ such that $f^n$ is $p^r$-Dumas if and only if the following conditions hold
\begin{itemize}
    \item[i)] $d=p^m$, for some $m\ge 1$,
    \item[ii)] $f(x)\equiv a_dx^{d}+a_0 \pmod p$ with $\nu_p(a_d)=\nu_p(a_0)=0$,
    \item[iii)] $f(x+c)$ is $p^r$-Dumas for some $c\in \q$.
\end{itemize}
\end{corollary}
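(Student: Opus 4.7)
The plan is to prove both implications by computing the $p$-adic valuations of the roots of $f^n$ in $\overline{\q_p}$, so that the Newton polygon of $f^n$ can be read off the root data.

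\emph{Necessity --- conditions (i) and (ii).} Assume $f^n$ is $p^r$-Dumas. The easy facts $\nu_p(a_d)=0$ and $\gcd(r,d)=\gcd(r,d^n)=1$ are immediate; since $p^r$-Dumas polynomials are irreducible over $\q_p$, Capelli's Lemma gives that $f$ is also irreducible over $\q_p$. Reducing modulo $p$, the Newton-polygon condition forces every coefficient of $x^j$ with $0\le j<d^n$ in $f^n$ to have $p$-adic valuation at least $1$, so $\overline{f^n}(x)=\bar c\,x^{d^n}$ is a monomial; in particular $\mathrm{ord}_0(\overline{f^n})=d^n$. Writing $\beta_i:=\bar f^i(0)\in\mathbb{F}_p$ and $m_i:=\mathrm{ord}_{\beta_i}(\bar f(y)-\beta_{i+1})\le d$, the chain rule gives $\mathrm{ord}_0(\overline{f^n})=\prod_{i=0}^{n-1}m_i$, so equality forces $m_i=d$ for every $i$, i.e.\ the identity $\bar f(y)=\bar a_d(y-\beta_i)^d+\beta_{i+1}$. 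Setting $i=0$ yields $\bar f(y)=\bar a_dy^d+\bar a_0$; if $\bar a_0\ne 0$, comparing with the $i=1$ identity forces $\binom{d}{k}\equiv 0\pmod p$ for $1\le k\le d-1$, equivalent to $d=p^m$ by the standard characterization of vanishing binomial coefficients modulo $p$. The alternative $\bar a_0=0$ is ruled out because then $\bar f(y)=\bar a_dy^d$, so $\nu_p(a_i)\ge 1$ for $0\le i<d$ and iterating shows $\nu_p(f^k(0))=\nu_p(a_0)$ for every $k\ge 1$ (the constant term dominates every step). Hence $\nu_p(a_0)=r$, and together with the single-slope Newton polygon of the irreducible $f$ and $\gcd(r,d)=1$, $f$ itself would be $p^r$-Dumas, contradicting the hypothesis.

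\emph{Necessity --- condition (iii).} Let $c\in\mathbb{Z}$ be any lift of the unique root $-\bar a_0/\bar a_d$ of $\bar f$ in $\mathbb{F}_p$. By Galois conjugacy over $\q_p$, $\nu_p(\alpha-c)$ takes the same value $t$ for every root $\alpha$ of $f$. To determine $t$, take a root $\beta$ of $f^n$, so $\nu_p(\beta)=r/d^n$, and set $\alpha:=f^{n-1}(\beta)$, a root of $f$. Writing $\tilde h(x):=f^{n-1}(x)-c=Lx^{d^{n-1}}+R(x)$, the binomial shape of $\overline{f^{n-1}}$ modulo $p$ together with $f^{n-1}(0)\equiv c\pmod p$ gives $\nu_p(L)=0$ and $\nu_p(R_j)\ge 1$ for every coefficient of $R$. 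Because $r<d$, the leading term dominates at $\beta$, so $\nu_p(\alpha-c)=\nu_p(\tilde h(\beta))=d^{n-1}\cdot r/d^n=r/d$. The Newton polygon of $f(x+c)$ is therefore the single segment from $(0,r)$ to $(d,0)$, and $\gcd(r,d)=1$ makes it $p^r$-Dumas.

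\emph{Sufficiency.} Assume (i)--(iii) and let $g(x):=f(x+c)$. Because $d=p^m$ and $y^{p^m}=y$ on $\mathbb{F}_p$, the reduction $\bar f$ acts on $\mathbb{F}_p$ as the affine map $y\mapsto\bar a_d y+\bar a_0$, so $\bar f^n(0)=\bar a_0(1+\bar a_d+\cdots+\bar a_d^{n-1})$ vanishes exactly when $n$ is a multiple of the multiplicative order of $\bar a_d$ in $\mathbb{F}_p^\times$ (or $p\mid n$ if $\bar a_d=1$). Pick such an $n\ge 2$; then $f^{n-1}(0)\equiv c\pmod p$ and the same $\tilde h$ as above satisfies $\nu_p(L)=0$ and $\nu_p(R_j)\ge 1$. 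Writing $f^n(x)=g(\tilde h(x))$, each root $\beta$ of $f^n$ satisfies $\tilde h(\beta)=\delta$ for some root $\delta$ of $g$ with $\nu_p(\delta)=r/d$; since $r<d$, the leading-term dominance forces $\nu_p(\beta)=r/d^n$ for every root. All $d^n$ roots then share this common valuation, making the Newton polygon of $f^n$ the single segment from $(0,r)$ to $(d^n,0)$, and $\gcd(r,d^n)=1$ gives $f^n$ $p^r$-Dumas. The main obstacle is the chain-rule step of the necessity direction, where one must leverage $\prod m_i=d^n$ against $m_i\le d$ to extract the rigid algebraic identities for $\bar f$, and separately verify the $\bar a_0=0$ case through the irreducibility argument.
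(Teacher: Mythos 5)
Your route is genuinely different from the paper's: you read everything off root valuations in $\overline{\q_p}$, using that a $p^r$-Dumas polynomial is irreducible over $\q_p$, Capelli to descend irreducibility to $f$, the multiplicity chain rule for $\bar f^n$ at $0$ to force $\bar f(y)=\bar a_d(y-\beta_i)^d+\beta_{i+1}$, and Galois conjugacy to pin down the common valuation $r/d$ of the roots of $f(x+c)$. The paper instead works entirely at the level of coefficients: it first characterizes eventually $p$-type polynomials (Theorem \ref{thm: characterization_of_eventually_p-type_polynomials}), proves that $f\circ g$ pure with $g$ $p$-type forces $f(x+g(0))$ pure (Proposition \ref{fg is pure then f(x+c)is pure}), and assembles these into Theorem \ref{thm: characterization_of_eventually_pure}, of which the Dumas statement is the $\gcd(r,d)=1$ special case. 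Your argument is a legitimate alternative for the Dumas case (though, unlike the paper's, it would not extend to the reducible pure case, where Galois conjugacy of the roots is unavailable), and the sufficiency direction in particular is clean and correct.

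There is, however, one genuine gap in your necessity direction: you reduce $f$ modulo $p$ and apply the chain rule $\mathrm{ord}_0(\overline{f^n})=\prod_i m_i$ with $\overline{f^n}=\bar f^{\,n}$, but nothing in the hypotheses guarantees that $f$ has $p$-integral coefficients. The corollary allows arbitrary $f\in\q[x]$, and if some $\nu_p(a_i)<0$ the reduction $\bar f$ is undefined and your entire multiplicity computation, as well as the identity $\overline{f^n}=\bar f^{\,n}$, collapses. This is not a vacuous worry: ruling it out is precisely the first step of the paper's proofs of Theorem \ref{thm: characterization_of_eventually_p-type_polynomials} and Proposition \ref{fg is pure then f(x+c)is pure}, where $\nu_p(f)=0$ is deduced from the factorization $f\mid f^n-f^n(0)$ together with $\nu_p\big(f^n-f^n(0)\big)=0$ and the multiplicativity of the Gaussian valuation. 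You need to insert such an argument (or an equivalent one via root valuations) before the sentence ``Writing $\beta_i:=\bar f^i(0)$.'' Two smaller points to tighten: the claim $f^{n-1}(0)\equiv c\pmod p$ in the necessity of (iii) is asserted rather than derived (it follows because $\bar f\big(\overline{f^{n-1}(0)}\big)=\overline{f^n(0)}=0$ and $\bar c$ is the unique root of $\bar f$), and in the sufficiency step ``leading-term dominance'' should be spelled out as: all non-leading terms of $\tilde h$ evaluated at $\beta$ have valuation at least $1>r/d$ once $\nu_p(\beta)>0$, while $\nu_p(\beta)\le 0$ is excluded separately. With the $p$-integrality step supplied, the proof is correct.
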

 One may see easily that if a polynomial $f\in\z[x]$ is dynamically irreducible, it is not necessarily true that all its shifts $f(x+c)$, $c\in\z$, are dynamically irreducible. However, the aforementioned characterization gives rise to polynomials in $\z[x]$ for which all the shifts are dynamically irreducible. 

If $f,\ldots,f^{n-1}$ are irreducible but $f^n$ is reducible, we say that $f$ is {\em $n$-newly reducible}, see \cite{Illig2021} and \cite{katharine}. It is worth mentioning that the irreducibility of the first few iterates of a polynomial does not necessarily imply dynamical irreducibility. For example, if $f(x)=x^2+1\in \mathbb{F}_{43}[x]$, then $f,\ldots,f^5$ are irreducible, but $f^6(x)=g(x)h(x)$ where  
\begin{align*}
g(x)&=x^{32}+13x^{31}+36x^{30}+34x^{29}+7x^{28}+21x^{27}+8x^{26}+11x^{25}+31x^{24}+35x^{23}+11x^{22}\\
&+10x^{21}+9x^{20}+7x^{19}+26x^{18}+35x^{17}+23x^{16}+33x^{15}+4x^{14}+28x^{13}+38x^{12}+17x^{11}\\
&+40x^{10}+39x^9+25x^8+5x^7+42x^6+15x^5+10x^4+25x^3+31x^2+26x+37,\\
h(x)&=x^{32}+30x^{31}+36x^{30}+9x^{29}+7x^{28}+22x^{27}+8x^{26}+32x^{25}+31x^{24}+8x^{23}+11x^{22}\\
&+33x^{21}+9x^{20}+36x^{19}+26x^{18}+8x^{17}+23x^{16}+10x^{15}+4x^{14}+15x^{13}+38x^{12}+26x^{11}\\
&+40x^{10}+4x^9+25x^8+38x^7+42x^6+28x^5+10x^4+18x^3+31x^2+17x+37.    
\end{align*}
In other words, $x^2+1$ is $6$-newly reducible over $\mathbb{F}_{43}$. Even if a polynomial is reducible or newly reducible, one may still construct a tower of irreducible polynomials. For instance, one can find another polynomial $g$ such that $g\circ f^n$ is irreducible for all $n\geq 1$. In the latter case, $g$ is said to be {\em $f$-stable}. In the following corollary, we introduce polynomials $f\in\q[x]$ such that all the iterates of any $p^r$-Dumas polynomials are $f$-stable, see Corollary \ref{cor: composition of dumas and p-type} for a proof.
\begin{corollary}
Let $g$ be a $p^r$-Dumas polynomial of degree $d$ and $f(x)= ax^e+ p^s h(x)\in\q[x]$ be such that $h(x)\in\q[x]$ with $\nu_p(a)=0$, $\deg(h)<e$, $\nu_p(h)\ge 0$, and $s>\frac{r}{d}$. If $\gcd(r,e)=1$, then $g^n\circ f^m$ is irreducible for all $n,m\geq 1$. In particular, $g^n$ is $f$-stable for any $n\geq 1$. 
\end{corollary}
 In addition, we display polynomials $f\in\q[x]$ for which one can find a $p^r$-Dumas polynomial $g\in\q[x]$ with $g\circ f$ being reducible, yet there exists $N\ge 2$ such that $g^n\circ f^m$ is irreducible for all $m\ge 1$ and all $n\ge N$ in the following corollary, for a proof, see Corollary \ref{cor: the iterate of dumas when composed with p-type is dumas}. 
 
 \begin{corollary}
Let $g$ be a $p^r$-Dumas polynomial of degree $d$ and $f(x)= ax^e+ p^s h(x)\in\q[x]$ be such that $h(x)\in\q[x]$ with $\nu_p(a)=0$, $\deg(h)<e$, $\nu_p(h)\ge 0$, and $s>\frac{r}{d}$. If $\gcd(r,e)=1$, then $g^n\circ f^m$ is irreducible for all $n,m\geq 1$. In particular, $g^n$ is $f$-stable for any $n\geq 1$. 
\end{corollary}

In this work, we also give due attention to eventually stable polynomials defined as follows. 
\begin{definition}\cite[Definition 1.1]{Demark}
Let $K$ be a field, $f$ be a polynomial in $ K[x]$, and $\alpha \in K$. We say $(f, \alpha)$ is {\em eventually stable}
over $K$ if there exists a constant $C(f, \alpha)$ such that the number of irreducible factors over $K$ of
$f^n(x)-\alpha$ is at most $C(f, \alpha)$ for all $n \geq 1$. In particular, we say that $f$ is {\em eventually stable} over $K$ if $(f, 0)$ is eventually stable.
\end{definition}
Equivalently, $f$ is eventually stable if there exists an iteration $N\geq 1$ such that the number of irreducible factors does not change in all the succeeding iterations. In fact, finding eventually stable polynomials equips us with an alternative way to construct $g$-stable polynomials for $g\in K[x]$. More precisely, if $f^N=g_1\cdots g_t$ where $g_1,\ldots,g_t$ are irreducible  and the number of irreducible factors of any iterate is at most $t$, then $f^{N+n}=(g_1\circ f^n)\cdots (g_t\circ f^n)$ where $g_1\circ f^n,\ldots,g_t\circ f^n$ are irreducible for all $n\geq 1$. In conclusion, $g_1,\ldots,g_t$ are all $f$-stable. 

In \cite[Corollary 6]{hamblen}, it was proven that binomials of the form $x^d+c \in \q[x]$ are eventually stable over $\q$ whenever $c$ is nonzero and not a reciprocal of an integer. For an overview of eventual stability of quadratic polynomials, we refer the reader to \cite{Demark}. In this work, given a prime 
$p$, we study the dynamical behavior of the iterations of $p^r$-pure polynomials, $r\ge1$, see Definition \ref{def: pure polynomials}. We remark that a $p^r$-Dumas polynomial is an irreducible $p^r$-pure polynomial. Given an upper bound on the number of irreducible factors of iterations of $p^r$-pure polynomials, we establish the eventual stability of $p^r$-pure polynomials in  the following theorem, see Theorem \ref{pure polynomials are eventually stable} for a proof.
\begin{theorem}
Suppose that $f\in\q[x]$ is a $p^r$-pure polynomial of degree $d$. Then for any $n\geq 1$, the iterate $f^n$ has at most $\gcd(d^n,r)$ irreducible factors over $\q$ and each irreducible factor has degree at least $\frac{d^n}{\gcd(d^n,r)}$. Moreover, $f$ is eventually stable over $\q$. 
\end{theorem}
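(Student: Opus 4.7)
The plan is to control the $p$-adic valuations of all roots of $f^n$ in $\overline{\q_p}$ and then convert this into a lower bound on the degrees of the irreducible factors of $f^n$ over $\q$ via ramification.

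First I would show by induction on $n$ that every root of $f^n$ in $\overline{\q_p}$ has $p$-adic valuation exactly $r/d^n$. The base case $n=1$ follows directly from the hypothesis: since $f$ is $p^r$-pure of degree $d$, its Newton polygon is the single segment joining $(0,r)$ and $(d,0)$, so the standard Newton polygon theory forces each of the $d$ roots to have valuation $r/d$. For the inductive step, fix a root $\alpha$ of $f^{n-1}$ with $\nu_p(\alpha)=r/d^{n-1}$ and analyze $f(x)-\alpha\in\overline{\q_p}[x]$, whose roots are precisely the preimages of $\alpha$ under $f$, i.e.\ the roots of $f^n$ above $\alpha$. The constant coefficient is $a_0-\alpha$ with $\nu_p(a_0)=r>r/d^{n-1}$, hence $\nu_p(a_0-\alpha)=r/d^{n-1}$. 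Purity of $f$ gives $\nu_p(a_i)\ge r(d-i)/d$ for $1\le i\le d-1$, and since $d\le d^n$ the inequality $r(d-i)/d\ge r(d-i)/d^n$ places each such point on or above the segment from $(0,r/d^{n-1})$ to $(d,0)$. Thus the Newton polygon of $f(x)-\alpha$ is a single segment of slope $-r/d^n$, and every root $\beta$ of $f^n$ above $\alpha$ satisfies $\nu_p(\beta)=r/d^n$.

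Next I would translate this local information into a degree bound. Let $g\in\q[x]$ be an irreducible factor of $f^n$ and $\beta$ a root of $g$ in $\overline{\q}$. Fixing any embedding $\overline{\q}\hookrightarrow\overline{\q_p}$, the previous step yields $\nu_p(\beta)=r/d^n$. Writing this in lowest terms forces the ramification index $e$ of $\q_p(\beta)/\q_p$ to be divisible by $s:=d^n/\gcd(d^n,r)$, so
\[
\deg g \;=\; [\q(\beta):\q] \;\ge\; [\q_p(\beta):\q_p] \;\ge\; e \;\ge\; \frac{d^n}{\gcd(d^n,r)}.
\]
Since $\deg f^n=d^n$, counting degrees bounds the number of irreducible factors of $f^n$ over $\q$ by $d^n/s=\gcd(d^n,r)$. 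Eventual stability then follows at once, because $\gcd(d^n,r)\le r$ uniformly in $n$, so the number of irreducible factors of $f^n$ never exceeds the constant $r$.

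The main technical point is the inductive Newton polygon analysis of $f(x)-\alpha$: one must verify that the polygon remains a single segment at every stage, despite the slope flattening as $n$ grows, and this reduces to the elementary comparison $r(d-i)/d\ge r(d-i)/d^n$ for all $0\le i\le d$. A minor subtlety is the use of the standard inequality $[\q(\beta):\q]\ge[\q_p(\beta):\q_p]$ comparing global and local degrees, which is what allows the local ramification bound to be promoted to a statement about factorization over $\q$.
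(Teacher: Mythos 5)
Your proof is correct, but it takes a genuinely different route from the paper. The paper's argument is a two-step reduction: it first shows that $f^n$ is itself $p^r$-pure for every $n$ (Corollary \ref{cor: iterates of a pure polynomial are pure}, which rests on the closure of $p^r$-pure polynomials under composition via the Purity Lemma), and then applies the cited Proposition \ref{anuj} to the degree-$d^n$ pure polynomial $f^n$ to get the factor count and the degree lower bound; eventual stability follows since $\gcd(d^n,r)\le r$. You instead work fiber by fiber over $\overline{\Q_p}$: an induction on the Newton polygon of $f(x)-\alpha$ shows every root of $f^n$ has valuation exactly $r/d^n$ (the key inequality $\nu_p(a_i)\ge \frac{r}{d}(d-i)\ge \frac{r}{d^n}(d-i)$ keeps the polygon a single segment, and $\nu_p(a_0-\alpha)=r/d^{n-1}$ since $r/d^{n-1}<r$), after which the denominator of $r/d^n$ forces the ramification index, hence the local and then the global degree of each irreducible factor, to be at least $d^n/\gcd(d^n,r)$. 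This effectively inlines a proof of Proposition \ref{anuj} and replaces the composition-of-pure-polynomials machinery with a direct valuation computation, so your argument is self-contained where the paper's relies on an external result. What you lose is the stronger structural conclusion that $f^n$ is $p^r$-pure, which the paper reuses elsewhere (e.g.\ for the Dumas and dynamical irreducibility corollaries); what you gain is a transparent explanation of \emph{why} the bound propagates to all iterates, namely that the root valuations $r/d^n$ always have denominator dividing $d^n$ with numerator controlled by $r$. The only cosmetic issues are the orientation convention of the Newton polygon (your segment from $(0,r)$ to $(d,0)$ versus the paper's from $(0,0)$ to $(d,r)$, which changes nothing) and the degenerate case $d=1$, which is trivial and worth a one-line remark.
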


 Consequently, we show that the aforementioned result in \cite[Corollary 6]{hamblen} follows directly from our results.
 In addition, we fully characterize polynomials $f$ that are not $p^r$-pure yet they possess $p^r$-pure iterates, hence they are eventually stable, when $\deg f>r$ in the following theorem, see Theorem \ref{thm: characterization_of_eventually_pure} for a proof.
 \begin{theorem}%\label{thm: characterization_of_eventually_pure}
Let $r$ be a positive integer and $p$ be a prime. Suppose $f(x)=a_dx^d+\ldots+a_0 \in \q[x]$ is not $p^r$-pure and $d>r$. Then $f(x)$ is eventually $p^r$-pure if and only if the following conditions hold
\begin{itemize}
    \item[i)] $d=p^m$ for some $m\geq 1$,
    \item[ii)] $f(x)\equiv a_dx^d+a_0 \pmod p$ such that $\nu_p(a_d)=\nu_p(a_0)=0$,
    \item[iii)] $f(x+c)$ is $p^r$-pure for some $c\in \q$.
\end{itemize}
Moreover, the least integer $n>1$ such that $f^n$ is $p^r$-pure is given by
$n=p$ if $a_d\equiv 1 \pmod p$; or $n=\operatorname{ord}_p(a_d)$ otherwise.
\end{theorem}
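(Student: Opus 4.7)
The plan is to prove both directions of the equivalence; the formula for the least $n$ drops out of the minimality analysis inside the sufficiency argument.

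\emph{Necessity.} Suppose $f^n$ is $p^r$-pure. The leading coefficient of $f^n$ equals $a_d^{(d^n-1)/(d-1)}$, so purity forces $\nu_p(a_d)=0$, and single-segment-purity with slope $-r/d^n$ forces every intermediate coefficient of $f^n$ to have positive valuation. Hence in $\Ff_p[x]$ we read off $\bar f^n(x)=\bar a_d^{(d^n-1)/(d-1)}\,x^{d^n}$, so $0$ is the only root of $\bar f^n$; taking $(n{-}1)$ iterated preimages, $\bar f$ has the unique root $\alpha:=\bar f^{n-1}(0)\in\Ff_p$, whence $\bar f(x)=\bar a_d(x-\alpha)^d$. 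I rule out $\alpha=0$ separately: if $\bar f(x)=\bar a_d x^d$, the recurrence $c_{k+1}=f(c_k)$, $c_0=0$ forces $\nu_p(c_k)=\nu_p(a_0)$ for every $k\ge1$ (the term $a_0$ dominates all others), so $\nu_p(a_0)=r$; then either the Newton polygon of $f$ is a single segment of slope $-r/d$ and $f$ itself is $p^r$-pure (contradicting the hypothesis), or it has a breakpoint which produces roots of several distinct valuations that propagate through $f^{n-1}$ to $f^n$, contradicting its purity. Hence $\alpha\ne 0$. Expanding $\bar a_d(x-\alpha)^d$, the vanishing of the coefficients of $x^k$ for $0<k<d$ forces $\binom{d}{k}\equiv0\pmod p$, and by Lucas's theorem this is equivalent to $d=p^m$, proving (i). Since $(x-\alpha)^{p^m}=x^{p^m}-\alpha$ in $\Ff_p$, we obtain $\bar f(x)=\bar a_d x^d-\bar a_d\alpha$, i.e.\ (ii).

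For (iii), set $c:=f^{n-1}(0)\in\Qq$ and factor
$f^n(x)=a_d\prod_j\bigl(f^{n-1}(x)-\beta_j\bigr)$, where $\beta_1,\ldots,\beta_d$ are the roots of $f$ in $\overline{\Qq_p}$. The $p^r$-purity of $f^n$ forces each of its $d^n$ roots to have valuation $r/d^n$; grouping by the factor index $j$, the polynomial $f^{n-1}(x)-\beta_j$ has all $d^{n-1}$ of its roots of valuation $r/d^n$, so its constant term $c-\beta_j$ has valuation $r/d$. Therefore $f(x+c)=a_d\prod_j\bigl(x-(\beta_j-c)\bigr)$ has every root of valuation $r/d$, and so is $p^r$-pure, proving (iii).

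\emph{Sufficiency and minimality.} Assume (i)-(iii). Since $y^{p^m}=y$ on $\Ff_p$, the map $\bar f$ acts as $y\mapsto a_d y+a_0$, so $\bar f^k(0)=\frac{a_d^k-1}{a_d-1}a_0$ when $a_d\not\equiv1\pmod p$ and $\bar f^k(0)=ka_0$ when $a_d\equiv1\pmod p$; the least $k\ge1$ with $\bar f^k(0)=0$ is exactly $n_0=\ord_p(a_d)$ (respectively $n_0=p$). For $1\le k<n_0$ we have $\nu_p(f^k(0))=0\ne r$, so no such $f^k$ is $p^r$-pure, which gives the minimality half of the ``moreover'' statement. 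To prove $f^{n_0}$ is $p^r$-pure, put $\tilde c:=f^{n_0-1}(0)$, so $\tilde c\equiv\alpha\pmod p$; letting $c_0$ be the $c$ from (iii), one has $c_0\equiv\alpha\pmod p$ and $\nu_p(c_0-\tilde c)\ge1>r/d$, so the identity $\beta_j-\tilde c=(\beta_j-c_0)+(c_0-\tilde c)$ yields $\nu_p(\beta_j-\tilde c)=r/d$ for every root $\beta_j$ of $f$, which means $g(y):=f(y+\tilde c)$ is again $p^r$-pure. A short induction using $(u+v)^{p^m}=u^{p^m}+v^{p^m}$ in characteristic $p$ gives $\bar f^k(x)=\bar a_d^k\,x^{d^k}+\bar c_k$ in $\Ff_p[x]$ for all $k\ge1$, so every intermediate coefficient of $f^{n_0-1}$ has $\nu_p\ge1$. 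Applied to the factorization $f^{n_0}(x)=a_d\prod_j(f^{n_0-1}(x)-\beta_j)$, each factor has constant term of valuation $r/d$, leading coefficient of valuation $0$, and intermediate coefficients of valuation $\ge1>r(d^{n_0-1}-k)/d^{n_0}$ (the inequality uses $d>r$); its Newton polygon is therefore a single segment of slope $-r/d^{n_0}$, so all its $d^{n_0-1}$ roots have valuation $r/d^{n_0}$. Taking the product over $j$, all $d^{n_0}$ roots of $f^{n_0}$ have valuation $r/d^{n_0}$, so the Newton polygon of $f^{n_0}$ is the single segment from $(0,r)$ to $(d^{n_0},0)$, proving that $f^{n_0}$ is $p^r$-pure.

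The main technical obstacle is ruling out the case $\alpha=0$ in the necessity proof: one must show that no such $f$ can have a $p^r$-pure iterate without itself being $p^r$-pure, which requires combining the recurrence for $\nu_p(f^k(0))$ with a careful analysis of how the breakpoints of the Newton polygon of $f$ survive under iteration and split the valuations of the roots of $f^n$. The hypothesis $d>r$ is used crucially throughout because it ensures $r/d<1$, so that integer valuations on $\Qq$ dominate the fractional valuation $r/d$ in every shift-and-subtract argument.
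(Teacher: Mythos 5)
Your proof is correct in its essentials but follows a genuinely different route from the paper's. The paper works at the level of coefficients and composition: necessity is obtained by quoting its characterization of eventually $p$-type polynomials (Theorem 2.8, which rests on Lemma 2.6 and Kummer's theorem) together with Proposition 5.3 applied to $f\circ f^{n-1}$; sufficiency shifts by the $c$ of (iii), uses Lemma 5.6 to see that $f^{n-1}(x)-c$ is $p$-type, and then invokes Corollary 3.9 on the composition of a pure polynomial with a $p$-type one. You instead pass to the roots in $\overline{\mathbb{Q}}_p$ and exploit that $p^r$-purity of a degree-$D$ polynomial with unit leading coefficient is exactly the statement that all roots have valuation $r/D$, reading everything off the factorization $f^n(x)=a_d\prod_j\bigl(f^{n-1}(x)-\beta_j\bigr)$ and the Newton polygons of the individual factors. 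This is more self-contained and arguably more transparent; it also forces you to confront the case where $0$ is the only root of $\bar f$ (i.e.\ $f$ is $p$-type but not $p^r$-pure), a case the paper's necessity argument passes over silently. Your disposal of it is right in spirit, but the ``propagate through $f^{n-1}$'' step should be written out: since $\nu_p(f^{n-1}(0))=\nu_p(a_0)=r$ while purity of $f^n$ forces $\nu_p\bigl(f^{n-1}(0)-\beta\bigr)=r/d<r$ for every root $\beta$ of $f$, the ultrametric inequality gives $\nu_p(\beta)=r/d$ for all $\beta$, so the Newton polygon of $f$ has a single slope after all, contradicting the hypothesis.

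The one genuine omission is at the very start of your necessity argument: before you may speak of $\bar f$, of $\overline{f^{\,n}}=\bar f^{\,n}$, or of iterated preimages of $0$ under $\bar f$, you must know that every coefficient of $f$ is $p$-integral; purity of $f^n$ a priori constrains only the coefficients of $f^n$. The paper supplies this with a short argument you should reproduce or replace: $f$ divides $f^n-f^n(0)$, whose Gaussian valuation is $0$; the cofactor has leading coefficient of valuation $0$ and hence Gaussian valuation at most $0$; and the multiplicativity $\nu_p(f)+\nu_p\bigl((f^n-f^n(0))/f\bigr)=0$ then forces $\nu_p(f)\ge 0$. With that inserted (and with condition (ii) read as asserting $p$-integrality of all coefficients in the sufficiency direction, which is how the paper uses it), your necessity, sufficiency, and minimality arguments all go through.
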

 
 It is worth noting that the eventual stability of the latter family of polynomials may also follow from the criteria given in Theorem \ref{thm: characterization_of_eventually_pure} by observing that these polynomials $f$ have good reduction at $p$ with degree a power of $p$, see \cite[Theorem 1.3]{JonesLevy}. However, we give two examples to show how our results can produce stronger bounds for the number of irreducible factors of the iterates. 
 \begin{example}
Let $k$ be an odd positive integer. Consider the following polynomials:
\begin{align*}
    f(x)&={(x+1)}^{2^m}+2^{6k} \text{ where } 2^m>6k\\
    g(x)&={(x+1)}^{2^m}+2^k  \text{ where } 2^m>k\\
\end{align*}
By \cite[Corollary 4.9]{Jones1150281}, both $f$ and $g$ are eventually stable such that $f^n$ has at most $6k$ irreducible factors while $g^n$ has at most $2^k$. It is easy to see that 
\begin{equation*}
    f(x)=\big({(x+1)}^{2^{m-1}}-2^{\frac{3k+1}{2}}(x+1)^{2^{m-2}}+2^{3k}\big)\big({(x+1)}^{2^{m-1}}+2^{\frac{3k+1}{2}}(x+1)^{2^{m-2}}+2^{3k}\big)
\end{equation*}
Using Theorem \ref{thm: characterization_of_eventually_pure}, $f^n$ possesses at most $\operatorname{gcd}(6k,2^m)=2$ irreducible factors, so any irreducible factor of any iterate of $f$ is $f$-stable. In fact, using Theorem \ref{cor: Characterization_of_eventually_Dumas}, $f$ is dynamically irreducible.
 \end{example}

In \S \ref{sec2}, we study the iterations of $p$-type polynomials and polynomials that are not $p$-type yet one of the iterates is $p$-type, i.e., eventually $p$-type polynomials. We fully characterize eventually $p$-type polynomials in Theorem \ref{thm: characterization_of_eventually_p-type_polynomials} and identify the least $p$-type iterate in Proposition \ref{prop: which iteration of an eventually p-type is p-type}. In \S \ref{sec3}, we discuss the properties of iterations of $p^r$-pure polynomials and discuss the conditions under which the composition of a $p^r$-pure polynomial and a $p$-type polynomial is $p^r$-pure. In \S \ref{sec4}, we use the results from \S \ref{sec3} together with a result from \cite{anuj} to conclude the eventual stability of $p^r$-pure polynomials. Moreover, we obtain some iterative techniques to produce irreducible polynomials from $p^r$-Dumas polynomials. Finally, in \S \ref{sec5}, we utilize the results in \S \ref{sec2} on eventually $p$-type polynomials to fully characterize a family of eventually 
$p^r$-pure polynomials.       

\subsection*{Acknowledgments} The authors would love to express their gratitude to Wade Hindes for reading an earlier draft of the manuscript and for several suggestions that helped the authors improve the manuscript. This work is supported by The Scientific and Technological Research Council of Turkey, T\"{U}B\.{I}TAK; research grant: ARDEB 1001/120F308. M. Sadek is supported by BAGEP Award of the Science Academy, Turkey.

\section{$p$-Type and Eventually $p$-Type Polynomials}
\label{sec2}
Throughout this article, we assume that $p$ is a rational prime. Moreover, all polynomials will be assumed to be in $\q[x]$ unless otherwise explicitly stated. 

In this section,
we introduce $p$-type and eventually $p$-type polynomials together with some of the properties of these polynomials.
For this purpose, we recall the definition of Gaussian valuations.
\begin{definition}\label{def: Gaussian Valuation}
Let $f(x)=a_dx^d+\ldots+a_0 \in \q[x]$ and $p$ be a prime. The Gaussian valuation of $f$ with respect to $p$ is defined by
\begin{equation*}
\displaystyle \nu_p(f):= \min_{0\leq i\leq d}\nu_p(a_i),
\end{equation*}
where $\nu_p(a_i)$ denotes the $p$-adic valuation of $a_i$.
\end{definition}
The abuse of notation may be justified by the fact that an element in $\q$ can be considered as a constant polynomial in $\q[x]$, hence the restriction of the Gaussian valuation with respect to $p$ over $\q$ is the $p$-adic valuation.

One sees easily that $\nu_p(f\cdot g)=\nu_p(f)+\nu_p(g)$ and $\nu_p(f+g)\ge \min(\nu_p(f),\nu_p(g))$ for $f,g\in\q[x]$.
\begin{definition}\label{def: p-type}
A polynomial $f(x)=a_dx^d+\ldots+a_0 \in \q[x]$ is said to be {\em $p$-type} if $\nu_p(a_d)=0$ and $f(x)\equiv a_dx^d \pmod p$. In other words, $\nu_p(a_0),\ldots, \nu_p(a_{d-1})\geq 1$.
\end{definition}
For example, a $p$-Eisenstein polynomial is $p$-type. 
\begin{definition}\label{def: eventually p-type}
Let $f\in \q[x]$ and $p$ be a prime. We say $f$ is {\em eventually $p$-type} if an iterate $f^n$ is $p$-type for some $n\geq 1$. 
\end{definition}
It is clear that a $p$-type polynomial is also eventually $p$-type. We are more interested in a polynomial which is not $p$-type but is eventually $p$-type. In other words, $f$ is not $p$-type but $f^n$ is $p$-type for some $n>1$. Consider the following example.
\begin{example}
The polynomial $f(x)=x^8+1$ is not $p$-type for any prime $p$, but
\begin{equation*}
f^2(x)=x^{64}+8 x^{56}+28 x^{48}+56 x^{40}+70 x^{32}+56 x^{24}+28 x^{16}+8 x^8+ 2  
\end{equation*}
is $2$-type. So, $f(x)$ is eventually $2$-type.
\end{example}
 In light of the previous example, it is valid to ask the following question.
 \begin{question}
  If a polynomial $f$ is not $p$-type yet it is eventually $p$-type, is there any restriction on the degree of $f$? Is there an exhaustive classification of such polynomials?
\end{question}
This question is answered in Theorem \ref{thm: characterization_of_eventually_p-type_polynomials}. We first need the following lemma.
\begin{lemma}\label{lemma: If f composed with g is p-type then f(x+g(0)) and g(x)-g(0) is p-type}
Suppose $f$ and $g$ are polynomials in $\q[x]$ such that $\nu_p(g)=\nu_p(f)=0$. If $f\circ g$ is $p$-type, \begin{comment}
{\color{red} Assume moreover that the leading coefficients of both $f$ and $g$ are integers.}
\end{comment} 
then both $f\big(x+g(0)\big)$ and $g(x)-g(0)$ are $p$-type.  
\end{lemma}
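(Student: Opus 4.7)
The plan is to reduce the composition modulo $p$ and exploit the resulting monomial identity in $\mathbb{F}_p[x]$. Write $f=a_dx^d+\ldots+a_0$ and $g=b_ex^e+\ldots+b_0$. Since $f\circ g$ is $p$-type, its leading coefficient $a_db_e^d$ has $\nu_p=0$, forcing $\nu_p(a_d)=\nu_p(b_e)=0$; together with $\nu_p(f)=\nu_p(g)=0$ this places every coefficient of $f$ and $g$ in $\mathbb{Z}_{(p)}$. The reductions $\bar f,\bar g\in\mathbb{F}_p[x]$ therefore have degrees $d$ and $e$, and the $p$-type hypothesis on $f\circ g$ becomes
\[
\bar f(\bar g(x))=\bar a_d\bar b_e^d\,x^{de}.
\]

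The core step is a root-counting argument over $\overline{\mathbb{F}}_p$. Factor $\bar f(y)=\bar a_d\prod_{i=1}^{s}(y-\gamma_i)^{m_i}$ with the $\gamma_i$ distinct. For each $i$, any root $\alpha$ of $\bar g(x)-\gamma_i$ satisfies $\bar f(\bar g(\alpha))=0$; but $\bar f(\bar g(x))=\bar a_d\bar b_e^d x^{de}$ has $0$ as its only root, so $\alpha=0$. Being a polynomial of degree $e$ whose unique root is $0$, the shift $\bar g(x)-\gamma_i$ must equal $\beta_i x^e$ for some $\beta_i\in\overline{\mathbb{F}}_p$. As $\bar g$ is a single fixed polynomial, the values $\gamma_i$ and $\beta_i$ are forced to coincide across $i$, and evaluation at $x=0$ identifies the common values as $\bar g(0)$ and $\bar b_e$. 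Hence $s=1$, $m_1=d$, and
\[
\bar f(y)=\bar a_d\bigl(y-\bar g(0)\bigr)^{d},\qquad \bar g(x)=\bar g(0)+\bar b_e\,x^e.
\]

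Translating back to $\mathbb{Q}[x]$ via the commutativity of mod-$p$ reduction with the substitution $y\mapsto y+g(0)$, the first identity gives $f(x+g(0))\equiv a_d x^d\pmod p$ and the second gives $g(x)-g(0)\equiv b_e x^e\pmod p$. Combined with $\nu_p(a_d)=\nu_p(b_e)=0$, these are exactly the defining properties of $p$-type polynomials, which completes the argument. The only delicate point is the root-counting step; everything else is bookkeeping, and no hypothesis on $g(0)\in\mathbb{Q}$ beyond $\nu_p(g(0))\ge 0$ (which is automatic from $\nu_p(g)=0$) is needed to make the shift $f(x+g(0))$ reduce well modulo $p$.
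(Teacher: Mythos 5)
Your proof is correct, but it takes a genuinely different route from the paper's. The paper argues directly with valuations: writing $F=f(x+g(0))$ and $G=g(x)-g(0)$, it picks the least indices $i$ and $j$ with $\nu_p(a_i)=\nu_p(b_j)=0$ and observes that the coefficient of $x^{ij}$ in $F\circ G$ is congruent to $a_ib_j^i$ modulo $p$, since every higher term $a_k(\cdots)^k$ with $k>i$ only contributes in degrees $\ge kj>ij$; the $p$-type hypothesis on the composition then forces $i=d$, and a second pass forces $j=e$. You instead reduce everything to $\mathbb{F}_p$ (legitimate, since $\nu_p(f)=\nu_p(g)=0$ together with $\nu_p(a_d)=\nu_p(b_e)=0$ make the reductions well defined and degree-preserving) and run a root-counting argument over $\overline{\mathbb{F}}_p$: each root $\gamma_i$ of $\bar f$ forces $\bar g-\gamma_i$ to vanish only at $0$, hence $\bar g-\gamma_i=\bar b_e x^e$, hence $\gamma_i=\bar g(0)$ is the unique root and $\bar f(y)=\bar a_d\bigl(y-\bar g(0)\bigr)^d$. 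Your version is more conceptual and actually yields slightly more than the lemma asserts, namely the explicit shape $f(x)\equiv a_d\bigl(x-g(0)\bigr)^d\pmod p$, a fact the paper only extracts later (via Kummer's theorem applied to $\binom{d}{k}$) in the proof of Theorem \ref{thm: characterization_of_eventually_p-type_polynomials}; the cost is passing to the algebraic closure and a factorization argument. The paper's version stays entirely at the level of $p$-adic valuations of coefficients, in keeping with its Newton-polygon viewpoint. Both arguments silently assume $d,e\ge 1$, which is harmless in context.
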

\begin{proof}
We define the following polynomials
\begin{eqnarray*}
F(x)&:=& f\big(x+g(0)\big)=a_dx^d+ a_{d-1}x^{d-1}+\ldots+a_0, \\   
 G(x)&:=& g(x)-g(0)=b_ex^e+b_{e-1}x^{e-1}+\ldots+b_1x.   
\end{eqnarray*}
 It is clear that $\nu_p(a_d)=\nu_p(b_e)=0$, since otherwise $f\circ g$ would not be $p$-type. Let $i,j$, $0\le i\leq d$, $1\leq j\leq e$, be the least nonnegative integers such that $\nu_p(a_i)=\nu_p(b_j)=0$. If $i=d$, then $F$ is $p$-type, so we assume otherwise. 
 
 One has 
 \begin{equation*}
     F\left(G(x)\right)\equiv a_d{(b_ex^e+\ldots+b_jx^j)}^{d}+\ldots+a_i{(b_ex^e+\ldots+b_jx^j)}^{i}\pmod p.
 \end{equation*}
Note that in the expansion of $F(G(x))$ the monomial $a_ib_j^i{x^{ij}}$ is the monomial of the least degree whose coefficient has zero $p$-adic valuation. Since $F\circ G$ is $p$-type, it follows that there has to be another monomial in the expansion of $F(G(x))$ with coefficient of zero $p$-adic valuation and whose degree is still $ij$. However, any monomial in the expansion of $a_k{(b_ex^e+\ldots+b_jx^j)}^{k}$ where $k$ is such that $i< k\le d$ must be of degree at least $kj>ij$. Therefore, $i=d$ and $F$ is $p$-type. 

Based on the argument above, one has
%, the monomial of minimum degree is $a_db^d_jx^{jd}$, but, $jd\geq ij$, then, $d=i$ and $F$ is a $p$-type polynomial. In this case,
\begin{equation*}
F\left(G(x)\right)\equiv a_d{(b_ex^e+\ldots+b_jx^j)}^{d}\pmod p.  
\end{equation*}
Since $F\circ G$ is $p$-type, this must yield that $j=e$, hence $G$ is $p$-type.   
 \end{proof}
 We are now in a place to prove the main result of this section.
\begin{theorem}\label{thm: characterization_of_eventually_p-type_polynomials}
 If $f\in \q[x]$ is not $p$-type but eventually $p$-type, then $f(x)= ax^{p^m}+ph(x)+b \in\q[x]$ for some $a,b\in\q$, $h\in\q[x]$ with $\nu_p(a)=\nu_p(b)=0$, $\deg(h)<p^m$ and $\nu_p(h)\ge 0$.
\end{theorem}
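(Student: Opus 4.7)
The plan is to extract constraints on $f$ modulo $p$ by applying Lemma~\ref{lemma: If f composed with g is p-type then f(x+g(0)) and g(x)-g(0) is p-type} to the two natural decompositions of $f^n$, and then to force the degree $d$ of $f$ to be a power of $p$ through a congruence argument built on Lucas' theorem.

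First, I will verify the hypotheses needed to apply the lemma. The leading coefficient of $f^n$ is $a_d^{(d^n-1)/(d-1)}$, and since $f^n$ is $p$-type this has $p$-adic valuation zero, forcing $\nu_p(a_d)=0$. A short preliminary argument (for instance, by tracking how the Gaussian valuation behaves under composition, or by examining the Newton polygon of $f^n$) establishes in addition that $\nu_p(f)=0$; once this is known, $\nu_p(f^{n-1})=0$ follows at once, since composing polynomials in $\mathbb{Z}_{(p)}[x]$ whose leading coefficient is a unit modulo $p$ again yields a polynomial of the same kind.

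With the hypotheses of Lemma~\ref{lemma: If f composed with g is p-type then f(x+g(0)) and g(x)-g(0) is p-type} verified for both decompositions $f^n=f^{n-1}\circ f$ and $f^n=f\circ f^{n-1}$, I extract two reductions modulo $p$. From $f^n=f^{n-1}\circ f$ the lemma produces that $f(x)-f(0)$ is $p$-type, so
\[ f(x)\equiv a_d x^d + f(0)\pmod{p}. \]
Since $f$ is assumed not to be $p$-type, $\nu_p(f(0))=0$. From $f^n=f\circ f^{n-1}$, setting $c:=f^{n-1}(0)$, the lemma produces that $f(x+c)$ is $p$-type, so
\[ f(x)\equiv a_d(x-c)^d\pmod{p}. \]
The residue $\bar c:=c\bmod p$ cannot vanish, for otherwise $f(x+c)\equiv f(x)\pmod{p}$ and $f$ itself would be $p$-type.

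Equating the two congruences in $\mathbb{F}_p[x]$ gives $a_d(x-\bar c)^d\equiv a_d x^d + f(0)\pmod{p}$. Comparing the coefficient of $x^i$ for each $0<i<d$ yields $\binom{d}{i}\bar c^{\,d-i}\equiv 0\pmod{p}$, and since $\bar c\neq 0$ this forces $\binom{d}{i}\equiv 0\pmod{p}$ for every $0<i<d$. By Lucas' theorem this occurs if and only if $d=p^m$ for some $m\geq 1$. With $d=p^m$, Frobenius in $\mathbb{F}_p[x]$ gives $(x-\bar c)^{p^m}=x^{p^m}-\bar c^{p^m}$, hence $f(x)\equiv a_d x^{p^m}-a_d\bar c^{p^m}\pmod{p}$. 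Setting $a:=a_d$ and $b:=f(0)$, the polynomial $f(x)-ax^{p^m}-b$ has all coefficients divisible by $p$ and its coefficient of $x^{p^m}$ vanishes, so one may write $f(x)=ax^{p^m}+ph(x)+b$ with $h\in\q[x]$, $\deg h<p^m$, $\nu_p(h)\geq 0$, and $\nu_p(a)=\nu_p(b)=0$, as asserted. The principal obstacle is the preliminary step of establishing $\nu_p(f)=0$, since it requires ruling out the possibility that coefficients of negative Gaussian valuation cancel across the iterate; once this is in hand, the remainder of the proof is a clean combinatorial consequence of the two applications of the lemma together with Lucas' theorem.
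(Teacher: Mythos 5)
Your argument follows the paper's proof almost step for step: the same Lemma~\ref{lemma: If f composed with g is p-type then f(x+g(0)) and g(x)-g(0) is p-type} applied to the two decompositions $f^n=f^{n-1}\circ f$ and $f^n=f\circ f^{n-1}$, the same comparison of the resulting congruences forcing $p\mid\binom{d}{k}$ for $0<k<d$, and Lucas' theorem where the paper cites Kummer's (an immaterial substitution). Your observation that $\bar c\neq 0$ is the same point the paper makes when it asserts $\nu_p(f^{n-1}(0))=0$, and the final bookkeeping giving $f(x)=ax^{p^m}+ph(x)+b$ is correct.

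The one genuine gap is exactly the step you flag as ``the principal obstacle'': you assert $\nu_p(f)=0$ without proving it, and neither of the two strategies you gesture at (tracking the Gaussian valuation under composition, or inspecting the Newton polygon of $f^n$) is carried out or obviously sufficient, precisely because coefficients of negative valuation could in principle cancel inside the iterate. The paper closes this with a short divisibility argument that you should adopt: since $f^n$ is $p$-type, the polynomial $f^n-f^n(0)$ has Gaussian valuation $0$; it is divisible in $\q[x]$ by $f-f(0)$ (as $f^{n-1}(y)-f^{n-1}(y_0)$ is divisible by $y-y_0$), and the quotient has leading coefficient a $p$-adic unit, hence Gaussian valuation at most $0$. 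Multiplicativity of the Gaussian valuation (Gauss's lemma) then forces $\nu_p\bigl(f-f(0)\bigr)\ge 0$, i.e.\ all nonconstant coefficients of $f$ are $p$-integral; a separate one-line check (if $\nu_p(a_0)<0$ then the constant term of $f^n$ has valuation $d^{n-1}\nu_p(a_0)<0$, contradicting $p$-typeness of $f^n$) handles the constant term. With that inserted, your proof is complete and coincides with the paper's.
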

\begin{proof}
Assume $f(x)=a_dx^d+\ldots+a_0$ so that $f^n$ is $p$-type for some $n>1$. We will show that $\nu_p(f)=0$. It is obvious that $\nu_p(a_d)=0$, hence $\nu_p(f)\le 0$. We assume that $\nu_p(f)<0$. 

The polynomial $f^n-f^n(0)$ is clearly $p$-type. Given that $f$ divides $f^n-f^n(0)$ and $\nu_p\left(\frac{f^n-f^n(0)}{f}\right)\leq 0$, as the $p$-adic valuation of the leading coefficient of $f^n-f^n(0)$ is $0$, we obtain that 
$
  0= \nu_p\left(f^n-f^n(0)\right)=\nu_p(f)+ \nu_p\left(\frac{f^n-f^n(0)}{f}\right)<0
$, hence a contradiction. Thus, $\nu_p(f)=0$.

Since $f^n$ is $p$-type, we must have $\nu_p(f^{n-1})=0$. By Lemma~\ref{lemma: If f composed with g is p-type then f(x+g(0)) and g(x)-g(0) is p-type}, the polynomials $f\left(x+f^{n-1}(0)\right)$ and $f^{n-1}-f^{n-1}(0)$ are $p$-type, moreover $f^{n-1}\left(x+f(0)\right)$ and $f-f(0)$ are $p$-type. It follows that $\nu_p(a_0)=0$, else $f$ is $p$-type. Therefore, $f(x)\equiv a_dx^d+a_0 \pmod p$. In a similar fashion, $\nu_p(f^{n-1}(0))=0$. Now, knowing that $f\left(x+f^{n-1}(0)\right)\equiv a_d{\left(x+f^{n-1}(0)\right)}^d+a_0\equiv a_dx^d \pmod p$, we must have
\begin{eqnarray*}
 \nu_p\left(a_d (f^{n-1}(0))^d+a_0\right)\ge 1, \quad\textrm{ and }\quad
\nu_p \left(\binom{d}{k}\right) \ge 1 \textrm{ for all } 0<k<d.    
\end{eqnarray*}
In view of Kummer's Theorem, \cite[Definition 1.2]{Casacubertan}, the latter condition implies that $d=p^m$ for some $m\geq 1$. In conclusion, if $f$ is not $p$-type but eventually $p$-type, then $f(x)\equiv a_dx^{p^m}+a_0 \pmod p$ where $\nu_p(a_0)=\nu_p(a_d)=0$ as desired.   
\end{proof}
If a polynomial $f$ is not $p$-type but is eventually $p$-type, one is interested in the least integer $n>1$ such that $f^n$ is $p$-type. For $a\in\q$ with $\nu_p(a)=0$, we set $\ord_p(a)$ to be the multiplicative order of $a$ modulo $p$. The following proposition identifies such a minimal iterate.
\begin{proposition}\label{prop: which iteration of an eventually p-type is p-type}
Suppose $f(x)= ax^{p^m}+ph(x)+b \in\q[x]$ is such that $\nu_p(a)=\nu_p(b)=0$, $\deg(h)<p^m$ and $\nu_p(h)\ge 0$. 
Assume that $f(x)$ is an eventually $p$-type polynomial. Then the least integer $n>1$ such that $f^n$ is $p$-type is determined as follows:
\begin{itemize}
    \item[a)] $n=p$ if $a\equiv 1 \pmod p$  
    \item[b)] $n=\operatorname{ord}_p(a)$ otherwise.
\end{itemize}
\end{proposition}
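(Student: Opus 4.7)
The plan is to reduce everything modulo $p$, since being $p$-type is a statement about the reduction $\pmod p$ of a polynomial with $p$-adic valuation zero. Under the given hypotheses, $\nu_p(a) = 0$ forces $\nu_p(f^n) = 0$ for all $n \ge 1$, so $f^n$ is $p$-type if and only if its reduction mod $p$ is a pure monomial of degree $p^{nm}$. My strategy is therefore to obtain a clean closed form for $f^n \bmod p$ and then translate the $p$-type condition into an elementary divisibility condition involving $a$ and $b$.

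The key step is to show by induction on $n$ that
\begin{equation*}
f^n(x) \;\equiv\; a^n \, x^{p^{nm}} \;+\; b\bigl(1 + a + a^2 + \cdots + a^{n-1}\bigr) \pmod{p}.
\end{equation*}
The base case $n=1$ is immediate from $f(x) \equiv a x^{p^m} + b \pmod p$. For the inductive step, I would use $f^{n+1}(x) \equiv a\bigl(f^n(x)\bigr)^{p^m} + b \pmod p$, then exploit the Frobenius identity $(u+v)^{p^m} = u^{p^m} + v^{p^m}$ in characteristic $p$ together with Fermat's little theorem in the form $c^{p^m} \equiv c \pmod p$ for any integer $c$. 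These together let me distribute the $p^m$-th power across the inductive hypothesis and collapse $(a^n)^{p^m}$ to $a^n$, producing the formula at level $n+1$ after a short line of algebra that absorbs the outer $+b$ into the geometric series in the constant term.

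Finally, since $\nu_p(b) = 0$, the condition that $f^n$ be $p$-type reduces to
\begin{equation*}
1 + a + a^2 + \cdots + a^{n-1} \;\equiv\; 0 \pmod{p},
\end{equation*}
and I split into two cases. If $a \equiv 1 \pmod p$, the sum collapses to $n \pmod p$, so the least $n > 1$ making it vanish is $n = p$. If $a \not\equiv 1 \pmod p$, the sum equals $(a^n - 1)/(a-1) \pmod p$, which vanishes precisely when $\operatorname{ord}_p(a) \mid n$; since $a \not\equiv 1$ forces $\operatorname{ord}_p(a) \ge 2$, the minimal admissible $n$ is $n = \operatorname{ord}_p(a)$.

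I do not anticipate a serious obstacle; the only care required is in the Frobenius-based induction, where one must remember both that $(u+v)^{p^m} = u^{p^m} + v^{p^m}$ over $\mathbb{F}_p$ and that the $p^m$-th power acts trivially on constants in $\mathbb{F}_p$, so that no extra middle terms appear in $f^n \bmod p$. Once the closed form is in hand, the case analysis yielding $n = p$ versus $n = \operatorname{ord}_p(a)$ is essentially a one-line invocation of the geometric sum identity.
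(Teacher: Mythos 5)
Your proposal is correct and follows essentially the same route as the paper: the paper isolates your closed form $f^n(x)\equiv a^nx^{p^{nm}}+b\sum_{i=0}^{n-1}a^i \pmod p$ as a separate lemma (proved by the same Frobenius-based induction) and then performs the identical two-case analysis on the geometric sum, distinguishing $a\equiv 1\pmod p$ from $a\not\equiv 1\pmod p$. No gaps.
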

We need the following lemma to prove Proposition \ref{prop: which iteration of an eventually p-type is p-type}. 

\begin{lemma}\label{lemma: iteration of a p-type mod p}
Suppose $f(x)= ax^{p^m}+ph(x)+b \in\q[x]$ is such that $\nu_p(a)=\nu_p(b)=0$, $\deg(h)<p^m$ and $\nu_p(h)\ge 0$. 
Then $f^n(x)\equiv a^nx^{p^{nm}}+b\sum_{i=0}^{n-1} a^i\pmod p$ for any $n\geq 1$. In particular, $f^n(0)\equiv b\sum_{i=0}^{n-1} a^i\pmod p$. 
\end{lemma}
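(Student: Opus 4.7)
The strategy is to proceed by induction on $n$. First I would observe that since $\nu_p(a)=\nu_p(b)=0$ and $\nu_p(h)\ge 0$, every coefficient of $f$ lies in the localization $\Z_{(p)}$, and this subring is closed under composition, so each iterate $f^n$ has $p$-integral coefficients and reduces to a well-defined element $\overline{f^n}\in\Ff_p[x]$. Since reduction modulo $p$ commutes with composition, $\overline{f^n}$ coincides with the $n$-th compositional iterate of $\overline{f}(x)=ax^{p^m}+b\in\Ff_p[x]$, where here $a,b$ denote the residues modulo $p$, which are nonzero by hypothesis. The task therefore reduces to proving the identity $\overline{f}^n(x)=a^nx^{p^{nm}}+b\sum_{i=0}^{n-1}a^i$ in $\Ff_p[x]$.

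The base case $n=1$ is immediate from the definition of $\overline{f}$. For the inductive step, assume the identity for $n$ and compute
\[
\overline{f}^{n+1}(x)=\overline{f}\bigl(\overline{f}^n(x)\bigr)=a\left(a^nx^{p^{nm}}+b\sum_{i=0}^{n-1}a^i\right)^{p^m}+b.
\]
The key ingredient is the Frobenius identity (``freshman's dream'') in characteristic $p$: for any elements $A,B$ of a commutative $\Ff_p$-algebra one has $(A+B)^{p^m}=A^{p^m}+B^{p^m}$. Applying this to the bracketed expression above, and then invoking Fermat's little theorem in the form $c^{p^m}=c$ for every $c\in\Ff_p$, the $p^m$-th power collapses to $a^nx^{p^{(n+1)m}}+b\sum_{i=0}^{n-1}a^i$. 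Substituting back and using the elementary identity $a\cdot b\sum_{i=0}^{n-1}a^i+b=b\sum_{i=0}^{n}a^i$ yields
\[
\overline{f}^{n+1}(x)=a^{n+1}x^{p^{(n+1)m}}+b\sum_{i=0}^{n}a^i,
\]
completing the induction; setting $x=0$ then gives the ``in particular'' statement. I expect no real obstacle here: the whole argument is a routine exercise in characteristic-$p$ arithmetic, and the only subtlety worth flagging is the legitimacy of reducing iterates modulo $p$, which is guaranteed by the hypothesis $\nu_p(h)\ge 0$ forcing $f\in\Z_{(p)}[x]$, a ring closed under composition.
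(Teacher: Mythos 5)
Your proof is correct and follows essentially the same route as the paper, which also argues by a one-line induction using the reduction $f(x)\equiv ax^{p^m}+b\pmod p$ and the Frobenius identity in characteristic $p$. Your additional remark that $\nu_p(h)\ge 0$ places all iterates in $\Z_{(p)}[x]$, so that reduction modulo $p$ commutes with composition, is a point the paper leaves implicit but does not change the argument.
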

\begin{proof}
A simple induction argument shows that $f^{n+1}(x)=f(f^n(x))\equiv a{(a^nx^{p^{nm}}+\sum_{i=0}^{n-1}a^ib)}^{p^m}+b\equiv a^{n+1}x^{p^{(n+1)m}}+b\sum_{i=0}^{n}a^i\pmod p$. 
\end{proof}

\begin{proof}[Proof of Proposition \ref{prop: which iteration of an eventually p-type is p-type}]
By Lemma~\ref{lemma: iteration of a p-type mod p}, if  $a\equiv 1\pmod p$, then $f^p(0)\equiv \sum_{i=0}^{p-1}b \equiv  pb\equiv 0\pmod p$; otherwise $f^k(0)\equiv b\sum_{i=0}^{k-1}a^i\equiv b(\frac{a^k-1}{a-1})\equiv 0\pmod p$. In either case, it is obvious that the specified $n$ is the smallest such integer.
\end{proof}
We end this section with an example.
\begin{example}
Consider 
\begin{equation*}
    f(x)=2x^5+\frac{5 x}{3}+7
\end{equation*}
Note that $\operatorname{ord}_5(2)=4$. One may see that
$f(x)\equiv 2x^5+2 \pmod 5$, $f^2(x)\equiv 4 x^{25}+1 \pmod 5$, $f^3(x)\equiv3 x^{125}+4 \pmod 5$
and $f^4(x) \equiv x^{625} \pmod 5$. Thus, although $f$ is not $5$-type, it is eventually $5$-type, and $f^4$ is $5$-type.
\end{example}

\section{$p^r$-Pure Polynomials}
\label{sec3}
In this section, we introduce $p^r$-pure polynomials, $r\ge1$. We show that the composition of two $p^r$-pure polynomials is also $p^r$-pure. Also, we prove that if $f$ is a $p^r$-pure polynomial and $g$ is $p$-type, then under certain conditions $f\circ g$ is $p^r$-pure. First, we define what a pure polynomial is.
\begin{definition}\label{def: pure polynomials}\cite{anuj}
A polynomial $f(x) = a_dx^d + a_{d-1}x^{d-1} + \ldots + a_0 \in \q[x]$ is said to be {\em $p^r$-pure} for some prime $p$ and some $r\geq 1$, if it satisfies all the following
\begin{itemize}
    \item[i)] $\nu_p(a_d)=0$,
    \item[ii)] $\nu_p(a_0)=r$,
    \item[iii)] $\frac{\nu_p(a_i)}{d-i}\geq \frac{r}{d}$ for all $1\leq i\leq d-1$.
\end{itemize}
\end{definition}
It is clear that a $p^r$-pure polynomial is $p$-type.

The previous definition can be interpreted using Newton polygons. We recall the definition of a Newton polygon. 
\begin{definition}\cite[Section 2.2.1]{prasolov_polynomials_2004}\label{def: newton polygon}
Let $f(x)=a_dx^d+\ldots+a_0 \in \q[x]$ with $a_da_0\neq 0$. For a prime $p$, suppose $\alpha_i=\nu_p(a_i)$. The Newton Polygon of $f$ with respect to $p$ is constructed as follows:
\begin{enumerate}
    \item[i)] Define  $S:=\{(0,\alpha_d),\ldots,(d-i,\alpha_i),\ldots, (d,\alpha_0)\}$. 
    \item[ii)] Consider the lower convex hull of $S$ to be $P_0=(0,\alpha_d),\ldots, P_r=(d,\alpha_0)$.
    \item[iii)] Construct a set of broken lines $P_0P_1,\ldots, P_{r-1}P_r$.
    \item[iv)] Mark the lattice points (points with integer coordinates) on the broken lines $P_0=Q_0,\ldots,P_r=Q_{r+s}$. They are called the \textit{vertices} of the Newton polygon. 
    \item[v)] The broken lines joining the vertices $Q_0Q_1,\ldots,Q_{r+s-1}Q_{r+s}$ are the \textit{sides} of the Newton polygon.
\end{enumerate} 
\end{definition}
Note that condition (iii) in Definition \ref{def: pure polynomials} is equivalent to saying that the Newton polygon of a $p^r$-pure polynomial consists of exactly one line segment joining $(0,0)$ and $(d,r)$. The family of $p^r$-Eisenstein polynomials provide an example of irreducible $p^r$-pure polynomials. However, a pure polynomial is not always irreducible; consider the following example.
\begin{example}\label{example: pure polynomials are not always irreducible}
The polynomial $f(x)=x^4+4=(x^2-2x+2)(x^2+2x+2)$ is $2^2$-pure but reducible over $\q$.
\begin{figure}[H]
    \centering
\begin{tikzpicture}
    \draw[step=1cm,gray,very thin] (0,0) grid (4,4);
	\draw (0,0) -- coordinate (x axis mid) (4,0);
    	\draw (0,0) -- coordinate (y axis mid) (0,4);
    	%ticks
    	\foreach \x in {0,...,4}
     		\draw (\x,1pt) -- (\x,-1pt)
			node[anchor=north] {\x};
    	\foreach \y in {0,...,4}
     		\draw (1pt,\y) -- (-4pt,\y) 
     			node[anchor=east] {\y}; 
    \draw[thick,-] (0,0) -- (2,1);
    \draw[thick,-] (2,1) -- (4,2);
  \foreach \Point in {(0,0), (2,1), (4,2)}{
    \node at \Point {\textbullet};}
    \draw[-] (0,0) --  (2,1) ;
    \draw[-] (2,1) --  (4,2);
\end{tikzpicture}
\captionsetup{justification=centering}
\caption{The polynomial $f(x)$ is $2^2$-pure but reducible over $\q$.}
    \label{fig:pure reducible}
\end{figure}
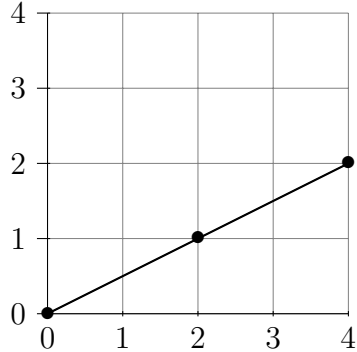
\end{example}
Nevertheless, if a $p^r$-pure polynomial satisfies Dumas criterion, then it must be irreducible over $\q$.
\begin{definition}\label{def: dumas criterion}\cite{Dumas}
A polynomial $f(x) = a_dx^d + a_{d-1}x^{d-1} + \ldots + a_0 \in \q[x]$ is called {\em $p^r$-Dumas} if there exists a prime $p$ and a positive integer $r$ such that
\begin{itemize}
    \item[i)] $\nu_p(a_d)=0$,
    \item[ii)] $\nu_p(a_0)=r$,
    \item[iii)] $\frac{\nu_p(a_i)}{d-i} \geq \frac{r}{d}$ for $1 \leq i \leq d-1$,
    \item[iv)] $\operatorname{gcd}(r,d)=1$.
\end{itemize}
 Moreover, a $p^r$-Dumas polynomial $f(x)$ is irreducible over $\q$.
 \end{definition}
 Observe that the $p$-Eisenstein criterion is a special case of the $p^r$-Dumas criterion with $r=1$. For $p$-Eisenstein polynomials, one can notice that all the iterates of a $p$-Eisenstein polynomial are $p$-Eisenstein. More generally, any $p$-Eisenstein polynomial enjoys the following property.
 \begin{proposition}\label{prop: If g is p-eisensten and f is p-type then fg is p-eisenstein}
Let $g,f \in \q[x]$. If $g$ is $p$-Eisenstein for some  prime $p$ and $f$ is $p$-type, then $g\circ f$ is $p$-Eisenstein. 
\end{proposition}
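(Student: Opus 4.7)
The plan is to verify the three defining conditions of a $p$-Eisenstein polynomial for $g \circ f$ directly, by combining the reduction mod $p$ coming from the $p$-type hypothesis on $f$ with a straightforward valuation estimate at the constant term. Write $g(x)=\sum_{i=0}^{e} b_i x^i$, so that $\nu_p(b_e)=0$, $\nu_p(b_i)\ge 1$ for $0\le i\le e-1$, and $\nu_p(b_0)=1$; and $f(x)=\sum_{j=0}^{d} a_j x^j$, so that $\nu_p(a_d)=0$ and $\nu_p(a_j)\ge 1$ for $0\le j\le d-1$.

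First, I would check the leading coefficient of $g\circ f$, which is simply $b_e a_d^e$, and hence has valuation $\nu_p(b_e)+e\,\nu_p(a_d)=0$. Next, for the remaining (non-constant) coefficients, I would exploit the $p$-type hypothesis via reduction modulo $p$: from $f(x)\equiv a_d x^d\pmod p$ we get $f(x)^i\equiv a_d^i x^{di}\pmod p$ for each $i\ge 0$, so that
\[
g(f(x))\;\equiv\;\sum_{i=0}^{e} b_i a_d^i x^{di}\;\equiv\; b_e a_d^e x^{de}\pmod p,
\]
where the last congruence uses $\nu_p(b_i)\ge 1$ for $i<e$. This shows at once that $g\circ f$ is $p$-type; in particular every coefficient of $g\circ f$ other than the leading one has $p$-adic valuation at least $1$, which is exactly condition (ii) of the $p$-Eisenstein definition.

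The only remaining task is to upgrade the constant term to valuation exactly $1$. Here I would evaluate
\[
g(f(0))\;=\;g(a_0)\;=\;b_0+b_1 a_0+b_2 a_0^2+\cdots+b_e a_0^e,
\]
and show that $b_0$ strictly dominates the other terms with respect to $\nu_p$. Indeed $\nu_p(b_0)=1$, while for $1\le i\le e-1$ one has $\nu_p(b_i a_0^i)\ge \nu_p(b_i)+i\,\nu_p(a_0)\ge 1+i\ge 2$, and for $i=e$ one has $\nu_p(b_e a_0^e)\ge 0+e\,\nu_p(a_0)\ge e\ge 2$. Since every term other than $b_0$ has valuation at least $2$, no cancellation can occur at level $1$ and $\nu_p(g(a_0))=1$, giving condition (iii).

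The proof is essentially bookkeeping; the main point of care is the last step, where strict (rather than weak) inequalities are needed to rule out accidental cancellation at the constant term. No deeper input beyond the definitions of $p$-type and $p$-Eisenstein and the elementary properties of the Gaussian valuation $\nu_p$ is required.
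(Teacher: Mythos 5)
Your proof is correct in substance, and it is worth noting that the paper never writes out a proof of this proposition at all: it states it as a known fact and later subsumes it in the general purity machinery (Theorem \ref{thm: Composition_of_pure_and_p-type} and Corollary \ref{cor: Composition of pure with p-type when when r/d<1} with $r=1$, $s=1>\tfrac{1}{\deg g}$), where the middle coefficients are controlled by a term-by-term expansion of $b_i\bigl(ax^d+ph(x)\bigr)^i$. Your route is more elementary and self-contained: you get conditions (i) and (ii) in one stroke from the ring-homomorphism property of reduction mod $p$ (legitimate here since both $f$ and $g$ have $p$-integral coefficients), and then isolate the constant term. The one point you should tighten is the step ``$\nu_p(b_e a_0^e)\ge e\ge 2$'': you never justify $e\ge 2$, and for $e=1$ the claim --- and indeed the proposition as literally stated --- fails: take $g(x)=x+p$ (which satisfies Definition \ref{p^r-Eisenstein polynomials} with $d=1$, $r=1$) and $f(x)=x^2+px-p$, so that $g(f(x))=x^2+px$ has constant term $0$. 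So either restrict to $\deg g\ge 2$ (the only case relevant to dynamical irreducibility, and surely the intended one) or flag the linear case explicitly; with that caveat your argument is complete, and for $e\ge 2$ every non-constant-index term in $g(a_0)$ indeed has valuation at least $2$, so no cancellation with $b_0$ can occur.
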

Next, we show that the property in Proposition \ref{prop: If g is p-eisensten and f is p-type then fg is p-eisenstein} holds for certain families of $p^r$-pure polynomials. Nevertheless, some preparation is needed.
\begin{lemma}\label{lem: f(c)_where_f_is_pure}
Suppose $f$ is a $p^r$-pure polynomial of degree $d$ and let $c\in \q$ be such that $\nu_p(c)>\frac{r}{d}$. Then $\nu_p(f(c))=r$.
\end{lemma}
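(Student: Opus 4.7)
The plan is a direct valuation computation. Writing $f(x) = a_d x^d + a_{d-1} x^{d-1} + \ldots + a_0$, I would compute $\nu_p\bigl(a_i c^i\bigr)$ for each $i$ and show that exactly one term, namely $a_0$, achieves the minimum value $r$, while every other term has valuation strictly greater than $r$. The non-Archimedean property of $\nu_p$ then forces $\nu_p(f(c)) = r$.

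More precisely, I would proceed in three cases. For $i = 0$, the hypothesis that $f$ is $p^r$-pure gives $\nu_p(a_0) = r$ directly. For $i = d$, since $\nu_p(a_d) = 0$ and $\nu_p(c) > r/d$, we get $\nu_p(a_d c^d) = d \nu_p(c) > r$. For $1 \leq i \leq d-1$, the purity condition yields $\nu_p(a_i) \geq r(d-i)/d$, so
\begin{equation*}
\nu_p(a_i c^i) \;=\; \nu_p(a_i) + i\,\nu_p(c) \;>\; \frac{r(d-i)}{d} + \frac{ir}{d} \;=\; r,
\end{equation*}
using the strict inequality $\nu_p(c) > r/d$.

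Combining these, the term $a_0$ has valuation exactly $r$ and every other term has valuation strictly larger than $r$, so the standard ultrametric inequality $\nu_p(x+y) \geq \min(\nu_p(x), \nu_p(y))$, with equality when the valuations differ, gives $\nu_p(f(c)) = r$. There is no real obstacle here; the only subtlety is to remember that the hypothesis $\nu_p(c) > r/d$ is \emph{strict}, which is what allows the $a_0$ term to win against the other contributions (including the leading term $a_d c^d$), rather than merely tying with them.
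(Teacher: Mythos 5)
Your proposal is correct and is essentially identical to the paper's own proof: the same term-by-term valuation computation splitting into the cases $i=0$, $1\le i\le d-1$, and $i=d$, followed by the ultrametric inequality. Nothing further is needed.
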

\begin{proof}
We set $s:=\nu_p(c)$. Suppose $f(x)=a_dx^d+\ldots+a_0$ is a $p^r$-pure polynomial. Assume that $\nu_p(c)=s>\frac{r}{d}$. We see that $\nu_p(a_dc^d)=ds>d(\frac{r}{d})=r$. Also, $\nu_p(a_ic^i)=\nu_p(a_i)+is$ where $0<i<d$. Since $f$ is $p^r$-pure, one has $\nu_p(a_i)+is>(d-i)\frac{r}{d}+i\frac{r}{d}=r$. Given that $\nu_p(a_0)=r$ and that $f(c)=a_d{c}^d+\ldots+a_i{c}^i+\ldots+a_0$, it follows that $\nu_p(f(c))=r$. 
\end{proof}
Next, we extend Proposition~\ref{prop: If g is p-eisensten and f is p-type then fg is p-eisenstein} to $p^r$-pure polynomials in the following theorem. 
\begin{theorem}\label{thm: Composition_of_pure_and_p-type}
Let $f\in\q[x]$ be a $p^r$-pure polynomial of degree $d>1$. Suppose that  $g(x)=bx^e+p^sh(x)\in\q[x]$, $e\ge1$, is such that $\nu_p(b)=0$, $\deg(h)\le e-1$, $\nu_p(h)\ge 0$ and $s>\frac{r}{d}$.
Then $f\circ g$ is $p^r$-pure.
\end{theorem}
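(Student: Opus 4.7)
The plan is to verify directly the three conditions of Definition \ref{def: pure polynomials} for the polynomial $f \circ g$, which has degree $de$. The leading coefficient is $a_d b^d$, which has $p$-adic valuation $0$ since $\nu_p(a_d) = \nu_p(b) = 0$. For the constant term, observe $g(0) = p^s h(0)$: if $h(0) = 0$ then $(f \circ g)(0) = f(0) = a_0$ has valuation exactly $r$; otherwise $\nu_p(g(0)) \ge s > r/d$, so Lemma \ref{lem: f(c)_where_f_is_pure} gives $\nu_p\bigl((f \circ g)(0)\bigr) = r$.

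The core of the proof is the bound on the intermediate coefficients. I will expand each $g(x)^i$ via the binomial theorem,
\[
g(x)^i \;=\; \sum_{k=0}^i \binom{i}{k} b^{\,i-k} p^{sk} x^{e(i-k)} h(x)^k,
\]
and track where a monomial $x^j$ can occur. A contribution of degree $j$ from the $(i,k)$-summand requires a monomial $x^m$ with $m = j - e(i-k)$ in $h(x)^k$; since $0 \le m \le k(e-1)$, this forces $k \ge \lceil (ei - j)/e \rceil$. Because $\nu_p(h) \ge 0$, $\nu_p(b) = 0$, and $p$-adic valuations of binomial coefficients are non-negative, the coefficient of $x^j$ in $g(x)^i$ has valuation at least $s \lceil (ei - j)/e \rceil \ge s(ei - j)/e$. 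Summing over $i$, the coefficient $c_j$ of $x^j$ in $f \circ g$ then satisfies
\[
\nu_p(c_j) \;\ge\; \min_{\substack{0 \le i \le d \\ ei \ge j}} \Bigl( \nu_p(a_i) + \tfrac{s(ei - j)}{e} \Bigr).
\]

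To conclude, I will check that each term in this minimum is at least $r(de - j)/(de)$, which is precisely the purity condition for $f \circ g$ at the coefficient of $x^j$. For $i = d$ we use $\nu_p(a_d) = 0$ together with $s > r/d$; for $1 \le i \le d - 1$ the purity of $f$ provides $\nu_p(a_i) \ge (d - i) r / d$. In either case, multiplying the target inequality
\[
\nu_p(a_i) + \frac{s(ei - j)}{e} \;\ge\; \frac{r(de - j)}{de}
\]
through by $de$ and collecting terms reduces it to $(ds - r)(ei - j) \ge 0$, which holds trivially since $s > r/d$ and $ei \ge j$. The main obstacle will be extracting the correct lower bound $s \lceil (ei - j)/e \rceil$ on the valuation of $[x^j] g^i$ from the binomial expansion; once this combinatorial bookkeeping is in place, the remaining estimates are elementary arithmetic and the purity of $f \circ g$ follows.
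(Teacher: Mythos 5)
Your proposal is correct and follows essentially the same route as the paper: both verify the three purity conditions directly, use Lemma \ref{lem: f(c)_where_f_is_pure} for the constant term, and bound the intermediate coefficients by expanding $g(x)^i$ with the binomial theorem and combining the estimate $\nu_p\ge \nu_p(a_i)+ks$ (with $k\ge (ei-j)/e$) with the purity of $f$. The only differences are cosmetic — you organize the bound by the target degree $j$ and verify the final inequality directly via $(ds-r)(ei-j)\ge 0$, whereas the paper bounds each monomial separately and closes with a short contradiction argument.
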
 
\begin{proof}
Let $f(x)=a_dx^d+\ldots+a_ix^i+\ldots+a_0$ be $p^r$-pure. To show that $f\circ g$ is $p^r$-pure, we need to prove that $f\circ g$ satisfies the conditions of Definition~\ref{def: pure polynomials}. Since the $p$-adic valuation of the leading coefficients of both $f$ and $g$ is zero, the leading coefficient of $f\circ g$ has zero $p$-adic valuation, hence i) of Definition~\ref{def: pure polynomials} is satisfied. In view of Lemma~\ref{lem: f(c)_where_f_is_pure}, as $\nu_p(g(0))\geq s>\frac{r}{d}$, one sees that  $\nu_p(f(g(0))=r$, hence ii) is satisfied. 

We are now left with showing that $f\circ g$ satisfies iii) of Definition~\ref{def: pure polynomials}.  We will show that every monomial in the expansion of $f(g(x))$ satisfies iii). In fact, a monomial in the expansion of $a_d{\left(bx^e+p^s(h(x)\right)}^d=a_d\sum_{k=0}^d\binom{d}{k}{(bx^e)}^{d-k}{\left(p^sh(x)\right)}^{k}$ is of the form $c_{de-ek+k\alpha}x^{de-ek+k\alpha}$, where
\begin{equation*}
c_{de-ek+k\alpha}=\binom{d}{k}a_d{b}^{d-k}p^{sk}h_{\alpha},\qquad \textrm{for some } h_{\alpha}\in \q\textrm{ with }\nu_p(h_{\alpha})\ge 0,
\end{equation*}
and $0\leq \alpha\leq e-1$.
One may see that $\nu_p(c_{de-ek+k\alpha})\geq ks>k\frac{r}{d}$. We claim that $\nu_p(c_{de-ek+k\alpha})\ge [de-(de-ek+k\alpha)]r/(de)$, since otherwise
$ [de-(de-ek+k\alpha)]r/(de)>kr/d$, i.e., $e-\alpha>e$ which contradicts the fact that $\alpha$ is a nonnegative integer.

In a similar fashion, a monomial in the expansion of $a_i{(bx^e+p^sh(x))}^i=a_i\sum_{k=0}^i \binom{i}{k}(bx^e)^{i-k}{\left(p^sh(x)\right)}^k$, $0<i<d$, is of the form $t_{ie-ek+k\alpha}x^{ie-ek+k\alpha}$, where
\begin{equation*}
t_{ie-ek+k\alpha}=\binom{i}{k}a_i{b}^{i-k}p^{sk}h_{\alpha},\qquad \textrm{for some } h_{\alpha}\in \q\textrm{ with }\nu_p(h_{\alpha})\ge 0,
\end{equation*}
and $0\leq \alpha\leq e-1$. One sees that $\nu_p(t_{ie-ek+k\alpha})\geq \nu_p(a_i)+ks>\nu_p(a_i)+k\frac{r}{d}$.
Again, we claim that $\nu_p(t_{ie-ek+k\alpha})\ge [de-(ie-ek+k\alpha)]r/(de)$, since otherwise we use the fact that $f$ is $p^r$-pure, in particular $\frac{\nu_p(a_i)}{d-i}\geq\frac{r}{d}$, to obtain that $[de-(ie-ek+k\alpha)]r/(de)>\nu_p(a_i)+kr/d>(d-i+k)r/d$. The latter leads to the contradiction $k\alpha<0$.

This concludes the proof as $f\circ g$ satisfies iii) of Definition~\ref{def: pure polynomials}
\end{proof}

In particular, if $d>r$ in the previous theorem, we get the following interesting corollary.
\begin{corollary}\label{cor: Composition of pure with p-type when when r/d<1}
Let $f$ be a $p^r$-pure polynomial of  degree $d>r$. If $g$ is a $p$-type polynomial, then $f\circ g$ is $p^r$-pure.
\end{corollary}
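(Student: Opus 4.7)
The plan is to recognize this corollary as an immediate specialization of Theorem \ref{thm: Composition_of_pure_and_p-type}, where the hypothesis $d>r$ plays exactly the role of the numerical condition $s>r/d$ in that theorem applied with $s=1$.

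First, I would unpack the assumption that $g$ is $p$-type. By Definition~\ref{def: p-type}, writing $g(x)=b_ex^e+b_{e-1}x^{e-1}+\ldots+b_0$ with $\nu_p(b_e)=0$, we have $\nu_p(b_i)\ge 1$ for every $0\le i\le e-1$. I would then rewrite $g$ in the form
\begin{equation*}
g(x)=b_ex^e+p\,h(x),\qquad h(x):=\frac{b_{e-1}}{p}x^{e-1}+\ldots+\frac{b_0}{p},
\end{equation*}
noting that $\deg(h)\le e-1$ and $\nu_p(h)\ge 0$ by construction.

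Next, I would check the numerical condition. With the above normalization $g$ matches the shape $bx^e+p^sh(x)$ from Theorem~\ref{thm: Composition_of_pure_and_p-type} with $s=1$. The theorem requires $s>\tfrac{r}{d}$, which in the present setting becomes $1>\tfrac{r}{d}$, i.e., $d>r$; this is exactly the hypothesis of the corollary.

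Finally, I would invoke Theorem~\ref{thm: Composition_of_pure_and_p-type} to conclude that $f\circ g$ is $p^r$-pure. There is no substantive obstacle: the only delicate point is that when $d=r$ one has $s=1=r/d$, which does not meet the strict inequality in Theorem~\ref{thm: Composition_of_pure_and_p-type}, so the hypothesis $d>r$ is genuinely needed rather than just $d\ge r$. Beyond that observation, the argument is a clean translation of hypotheses.
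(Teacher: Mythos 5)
Your proof is correct and matches the paper's own argument, which likewise derives the corollary as the special case of Theorem~\ref{thm: Composition_of_pure_and_p-type} with $s\geq 1>\frac{r}{d}$. Your additional unpacking of the $p$-type hypothesis into the form $b_ex^e+p\,h(x)$ and the remark on why $d>r$ (rather than $d\ge r$) is needed are accurate but not a different route.
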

\begin{proof}
This is a special case of Theorem \ref{thm: Composition_of_pure_and_p-type} with $s\geq 1>\frac{r}{d}$. 
\end{proof}

We now prove the following lemma. \begin{lemma}[Purity Lemma] \label{lem: purity_lemma}
Let $f$ be a $p^r$-pure polynomial of degree $d>1$ and $g(x)=bx^t$, $t\ge 1$, be a monomial in $\q[x]$. Then the following statements hold true.
\begin{itemize}
    \item[i)] If $\nu_p(b)=0$, then $f\circ g$ is $p^r$-pure and there exists some $c\in \q$ such that $g\circ f+c$ is  $p^r$-pure. 
    \item[ii)] If $\frac{\nu_p(b)}{e-t}\geq \frac{r}{e}$ for some $e>t$, then there exists some $c\in \q$ such that $x^{de}+g\circ f+c$ is $p^r$-pure.
\end{itemize}
\end{lemma}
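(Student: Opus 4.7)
My plan is to verify each claim directly from the Newton-polygon characterisation of $p^r$-purity: a polynomial of degree $D$ is $p^r$-pure precisely when its leading coefficient is a $p$-unit, its constant term has $\nu_p = r$, and every intermediate coefficient $c_k$ of $x^k$ satisfies $\nu_p(c_k) \ge r(D-k)/D$. Writing $f(x) = \sum_{i=0}^d a_i x^i$, the hypothesis on $f$ gives $\nu_p(a_j) \ge (d-j)r/d$ for all $0 \le j \le d$, with equality at $j=0$ and $j=d$. I will compute the coefficients of the three polynomials that appear in the lemma and check this inequality in each case.

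For the first claim of (i), the fact that $g$ is a monomial makes the composition transparent: $f \circ g(x) = \sum_{i=0}^d a_i b^i x^{ti}$ has degree $td$ and is supported only on the exponents $0, t, 2t, \ldots, td$. Since $\nu_p(b)=0$, the leading and constant coefficients have valuations $0$ and $r$, while for $1 \le i \le d-1$ the required inequality $\nu_p(a_i)/(td-ti) \ge r/(td)$ is identical to the pure condition $\nu_p(a_i)/(d-i) \ge r/d$ on $f$. Thus $f \circ g$ is already $p^r$-pure without any correction.

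For the two remaining claims I will expand $g \circ f(x) = b \, f(x)^t$ via the multinomial theorem. The coefficient of $x^k$ in $f(x)^t$ is a sum over tuples $(i_0,\ldots,i_d)$ with $\sum i_j = t$ and $\sum j i_j = k$, each term having $p$-adic valuation at least $\sum i_j \nu_p(a_j) \ge (r/d) \sum i_j (d-j) = (r/d)(td-k)$, the last equality being pure algebra using both constraints on the tuple. Multiplying by $b$ yields the uniform lower bound $\nu_p(\text{coeff of } x^k) \ge \nu_p(b) + r(td-k)/d$ for $1 \le k \le td$. In the setting of the second claim of (i), $\nu_p(b)=0$ and since $d \le td$ this already exceeds the purity target $r(td-k)/(td)$; the constant term of $g \circ f$ equals $b a_0^t$ of valuation $tr$, which I correct by choosing any $c \in \q$ with $\nu_p(c + b a_0^t) = r$ — for instance $c = p^r - b a_0^t$ — producing a $p^r$-pure polynomial of degree $td$.

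For (ii) the required inequality becomes $\nu_p(b) + r(td-k)/d \ge r(de-k)/(de)$ for $1 \le k \le td$, since the monomials of $x^{de} + g\circ f$ with $td < k < de$ are zero and the leading term $x^{de}$ is a unit. A short rearrangement rewrites this as $\nu_p(b) \ge r(1-t) + rk(e-1)/(de)$, and since $e > t \ge 1$ and $d > 1$, the right-hand side is increasing in $k$ and attains its maximum $r(e-t)/e$ precisely at $k = td$; this is exactly the hypothesis $\nu_p(b)/(e-t) \ge r/e$. Correcting the constant term by an appropriate $c \in \q$ as before finishes the argument. I expect the main obstacle to be this sharpness calculation in (ii): the hypothesis on $\nu_p(b)$ is engineered to be tight at the boundary monomial $x^{td}$ of $g \circ f$, and identifying this boundary correctly is the crux of the proof.
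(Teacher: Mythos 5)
Your proof is correct and follows essentially the same route as the paper's: a direct multinomial expansion of $b\,f(x)^t$, the valuation bound $\nu_p(b)+\tfrac{r}{d}(td-k)$ for the coefficient of $x^k$ obtained from the identity $\sum_j i_j(d-j)=td-k$, and the correction of the constant term by $c=p^r-bf(0)^t$. The only cosmetic difference is that the paper argues by contradiction term-by-term and treats the boundary monomial $ba_d^tx^{td}$ in part (ii) as a separate case, whereas you verify the same inequality directly and observe that the hypothesis $\nu_p(b)/(e-t)\ge r/e$ is exactly what is needed at $k=td$.
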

\begin{proof}
Let $f(x)=a_dx^d+\ldots+a_0$ and $\nu_p(b)=0$. Since
\begin{equation*}
    f(bx^t)=\sum_{i=0}^da_ib^ix^{it},
\end{equation*}
it is easily seen that the $p$-adic valuation of the leading coefficient is zero and the $p$-adic valuation of the constant coefficient is $\nu_p(a_0)=r$. Moreover, $f(bx^t)$ is a polynomial of degree $dt$. Indeed, for any $0<i< d$, one observes that $\nu_p(a_ib^i)=\nu_p(a_i)$. Since
$\frac{\nu_p(a_i)}{d-i}\geq \frac{r}{d}$, it follows that
 $\frac{\nu_p(a_i)}{dt-it}\geq \frac{r}{dt}$, hence the first part of i) is proved.

 We now write $g(f(x))= b{\left(a_dx^d+\ldots+a_0\right)}^t$. A monomial in the latter expansion is of the form  
\[ bh\prod_{i=0}^{d}{a_i}^{q_i}x^{iq_i},\quad\textrm{for some }h\in\q\textrm{ with }\nu_p(h)\ge0,\] 
where $\sum_i q_i=t$.
If $1\leq \sum_i iq_i< dt$, one observes that
\begin{equation*}
    \nu_p\left(\prod_{i=0}^{d}a_i^{q_i}\right)=\sum_{i=0}^{d}q_i\nu_p(a_i)\geq \sum_{i=0}^dq_i\frac{r}{d}(d-i).
\end{equation*}
In order to show that such a monomial satisfies condition iii) in Definition \ref{def: pure polynomials}, one must have
\begin{equation*}
  \frac{\nu_p\left(\prod_{i=0}^{d}a_i^{q_i}\right)}{dt-\sum_iiq_i}\geq \frac{r}{dt}.
\end{equation*}
We assume on the contrary that the latter inequality does not hold. In particular, one has
\begin{equation*}
\frac{\sum_{i}q_i\frac{r}{d}(d-i)}{dt-\sum_iiq_i}\leq \frac{\nu_p\left(\prod_{i}a_i^{q_i}\right)}{dt-\sum_iiq_i}<\frac{r}{dt}.
\end{equation*}
Thus, one obtains
\begin{equation*}
\frac{\sum_{i}q_i(d-i)}{dt-\sum_iiq_i}<\frac{1}{t}.
\end{equation*}
Given that $\sum_iq_i=t$, we get the following contradiction
\begin{equation*}
    1=\frac{dt-\sum_iiq_i}{dt-\sum_iiq_i}<\frac{1}{t}
    \end{equation*}
It is easy to see that the $p$-adic valuation of the constant coefficient of $b{(f(x))}^t-b{f(0)}^t+p^r$ is exactly $r$. Thus, $g(f(x))-b{(f(0))}^t+p^r$ is $p^r$-pure.

For ii), given that $\frac{\nu_p(b)}{e-t}\geq \frac{r}{e}$ for some $e>t$, if $1\leq \sum_i iq_i< dt$, then one gets
\begin{equation*}
 \nu_p\left(b\prod_{i=0}^{d}a_i^{q_i}\right)=\nu_p(b)+\sum_{i=0}^dq_i\nu_p(a_i) \geq \frac{r}{e}(e-t)+\sum_{i=0}^dq_i\frac{r}{d}(d-i)=\frac{r}{e}(e-t)+rt-\frac{r}{d}\sum_{i=0}^diq_i.    
\end{equation*}
Again one claims that $x^{de}+g(f(x))$ satisfies condition iii) in Definition \ref{def: pure polynomials}. More precisely,  we will show that $\nu_p\left(b\prod_{i=1}^{d}a_i^{q_i}\right)\ge (de-\sum iq_i)r/(de)$, since otherwise 
\begin{equation*}
    \frac{r}{e}(e-t)+rt-\frac{r}{d}\sum_{i=0}^d iq_i<\frac{(de-\sum iq_i)r}{de}.
\end{equation*}
Simplification yields the following contradiction 
$\sum_i iq_i>dt.$

We note that for the monomial $ba_d^tx^{dt}$, one has 
\begin{equation*}
    \frac{\nu_p(ba^t)}{e-t}= \frac{\nu_p(b)}{e-t}\geq \frac{r}{e}, \text{hence } \frac{\nu_p(ba^t)}{de-dt}\ge\frac{r}{de}.
\end{equation*}
It follows that the polynomial $x^{de}+g(f(x))$ satisfies i) and iii) of Definition~\ref{def: pure polynomials}. Therefore, $x^{de}+g(f(x))-g(f(0))+p^r$ is a $p^r$-pure polynomial.
\end{proof}
We are now ready to prove the following result. 
If the polynomial $g\in\q[x]$ in Theorem \ref{thm: Composition_of_pure_and_p-type} is $p^r$-pure, we conclude that $p^r$-pure polynomials are closed under composition.  
\begin{theorem}\label{thm: the_composition_of _two_pure_polynomials_is_pure}
If $f, g \in \q[x]$ are $p^r$-pure with $\operatorname{deg}(f)>1$, then $f\circ g$ is $p^r$-pure.
\end{theorem}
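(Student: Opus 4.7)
The plan is to verify the three defining conditions of Definition \ref{def: pure polynomials} for $f\circ g$ directly, by expanding the composition via the multinomial theorem. Write $f(x)=\sum_{i=0}^{d}a_i x^i$ and $g(x)=\sum_{j=0}^{e}b_j x^j$. Condition (i) of $p^r$-purity is immediate since the leading coefficient of $f\circ g$ is $a_d b_e^{d}$, which has $p$-adic valuation $0$. For condition (ii), the constant term is $f(g(0))=f(b_0)$; since $\nu_p(b_0)=r$ and the hypothesis $\deg f>1$ gives $r>r/d$, Lemma \ref{lem: f(c)_where_f_is_pure} forces $\nu_p(f(b_0))=r$ exactly.

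The core of the argument is verifying condition (iii). Expanding,
\begin{equation*}
f(g(x))=\sum_{i=0}^{d} a_i \sum_{\alpha_0+\cdots+\alpha_e=i}\binom{i}{\alpha_0,\ldots,\alpha_e}\prod_{j=0}^{e}b_j^{\alpha_j}\, x^{N},
\end{equation*}
where $N=\sum_{j=0}^{e}j\alpha_j$ is the degree contributed by the given multi-index. Since the coefficient of $x^N$ in $f\circ g$ is a sum of monomial contributions of this form, it suffices to show each one has $p$-adic valuation at least $(de-N)r/(de)$. Using $\nu_p(b_e)=0$ together with $\nu_p(b_j)\ge (e-j)r/e$ for $0\le j\le e-1$ (from $p^r$-purity of $g$), and exploiting the identity $\sum_{j=0}^{e}(e-j)\alpha_j=ei-N$ (which follows from $\sum_j\alpha_j=i$), one obtains
\begin{equation*}
\nu_p\left(a_i\prod_{j=0}^{e}b_j^{\alpha_j}\right)\ge \nu_p(a_i)+\frac{r(ei-N)}{e}.
\end{equation*}

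The argument then splits according to whether $i=d$ or $i<d$. If $i=d$, then $\nu_p(a_d)=0$ and the bound becomes $r(de-N)/e\ge r(de-N)/(de)$, which is clear. If $i<d$, the $p^r$-purity of $f$ gives $\nu_p(a_i)\ge (d-i)r/d$; combining with the inequality above and clearing denominators reduces the desired estimate to $(d-1)(ei-N)\ge 0$, which holds since $N=\sum_j j\alpha_j\le e\sum_j\alpha_j=ei$. The main obstacle is essentially the algebraic bookkeeping: the $p^r$-pure bounds for $f$ and for $g$ must be invoked simultaneously and must align precisely, and the simplification $\sum_{j<e}(e-j)\alpha_j=ei-N$ is exactly what makes the two families of inequalities combine to yield the $p^r$-pure bound for $f\circ g$.
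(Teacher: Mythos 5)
Your proof is correct and follows essentially the same route as the paper's: both arguments verify conditions (i)--(iii) of Definition \ref{def: pure polynomials} directly, handling the constant term via Lemma \ref{lem: f(c)_where_f_is_pure} and condition (iii) by bounding the valuation of each monomial contribution in the expansion of $a_i(g(x))^i$. The only difference is organizational: the paper delegates the monomial bookkeeping to the Purity Lemma (Lemma \ref{lem: purity_lemma}), whereas you carry out the same multinomial-coefficient estimate inline using the identity $\sum_j (e-j)\alpha_j = ei - N$.
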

\begin{proof}
 If $f(x)=a_dx^d+\ldots+a_0$ and $g(x)=b_ex^e+\ldots+b_0$ are $p^r$-pure polynomials with $d>1$ and $e\geq1$, then each monomial in the expansion $a_i{(b_ex^e+\ldots+b_0})^i$, $0<i\leq d$, satisfies iii) in Definition~\ref{def: pure polynomials}, see Lemma~\ref{lem: purity_lemma}. It is obvious that the $p$-adic valuation of the leading coefficient is zero. Finally, by Lemma \ref{lem: f(c)_where_f_is_pure}, since $d>1$ and $\nu_p(g(0))=r>\frac{r}{d}$, then, $\nu_p(f(g(0)))=r$ and $f\circ g$ is a $p^r$-pure polynomial. 
\end{proof}

The following corollary follows directly from Theorem \ref{thm: the_composition_of _two_pure_polynomials_is_pure}.
\begin{corollary}\label{cor: iterates of a pure polynomial are pure}
If $f$ is a $p^r$-pure polynomial with $\deg f>1$, then  $f^n$ is $p^r$-pure for all $n\geq 1$.
\end{corollary}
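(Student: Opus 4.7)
The plan is to prove this by induction on $n$, leveraging Theorem \ref{thm: the_composition_of _two_pure_polynomials_is_pure} as the workhorse. The base case $n=1$ is immediate since $f$ itself is $p^r$-pure by hypothesis. For the inductive step, I assume $f^n$ is $p^r$-pure and write $f^{n+1} = f \circ f^n$; since $f$ is $p^r$-pure with $\deg f > 1$ and $f^n$ is $p^r$-pure, Theorem \ref{thm: the_composition_of _two_pure_polynomials_is_pure} applies directly and yields that $f^{n+1}$ is $p^r$-pure.

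The only subtlety worth checking is that the hypotheses of Theorem \ref{thm: the_composition_of _two_pure_polynomials_is_pure} are genuinely met at each stage of the induction: the \emph{outer} polynomial in the composition must have degree strictly greater than one, while the inner one merely needs degree at least one. In writing $f^{n+1}=f\circ f^n$, the outer polynomial is $f$, whose degree is $>1$ by hypothesis, so this condition is preserved throughout the induction and does not degenerate. No further obstacle arises, so the proof is essentially a one-line appeal to the preceding theorem combined with induction. Given how short the argument is, I would simply record it as:

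\begin{Proof}
We argue by induction on $n$. For $n=1$, the statement is exactly the hypothesis. Suppose $f^n$ is $p^r$-pure for some $n\ge 1$. Since $\deg f>1$ and both $f$ and $f^n$ are $p^r$-pure, Theorem \ref{thm: the_composition_of _two_pure_polynomials_is_pure} implies that $f^{n+1}=f\circ f^n$ is $p^r$-pure. This completes the induction.
\end{Proof}
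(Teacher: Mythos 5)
Your proof is correct and matches the paper's argument exactly: the paper simply states that the corollary "follows directly from Theorem \ref{thm: the_composition_of _two_pure_polynomials_is_pure}", and the intended argument is precisely your induction with $f^{n+1}=f\circ f^n$, keeping the degree-$>1$ polynomial $f$ on the outside. Your explicit check that the outer factor is the one needing degree greater than one is the right subtlety to note, and nothing further is required.
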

 Theorem~\ref{thm: Composition_of_pure_and_p-type} and Theorem~\ref{thm: the_composition_of _two_pure_polynomials_is_pure} can be used to prove $p^r$-purity of $f\circ g$ for different classes of pairs of polynomials $f$, $g$. In other words, for given polynomials $f$, $g$, Theorem~\ref{thm: Composition_of_pure_and_p-type} can be successfully used to show that $f\circ g$ is $p^r$-pure whereas either $f$ or $g$ fails to satisfy the hypothesis of Theorem~\ref{thm: the_composition_of _two_pure_polynomials_is_pure}, and vice versa. This can be illustrated by the following example.
\begin{example}
The following polynomials
\begin{align*}
    f(x)&=x^2+32,\\
    g(x)&= x^4+4x^3+32
\end{align*}
are both $2^5$-Dumas. The polynomial
\begin{align*}
    f\left(g(x)\right)=x^8+8x^7+16x^6+64 x^4+256 x^3+1056
    \equiv x^8+8 x^7+16 x^6 \pmod{2^5}
\end{align*}
is $2^5$-Dumas too as $\frac{\nu_2(8)}{8-7}, \frac{\nu_2(16)}{8-6}>\frac{5}{8}$. This is a direct application of Theorem~\ref{thm: the_composition_of _two_pure_polynomials_is_pure}. However, if we try to apply Theorem \ref{thm: Composition_of_pure_and_p-type}, then $d=\operatorname{deg}(f)=2$, $r=5$, so $s>\frac{5}{2}$. However, $g(x)\equiv x^4+4x^3\not \equiv x^4 \pmod{2^3}$. Thus, Theorem~\ref{thm: Composition_of_pure_and_p-type} fails to prove $f\circ g$ is $2^5$-pure in this case. 

Now, if we introduce the polynomial
\begin{equation*}
    h(x)=x^4+8,
\end{equation*}
we get $h(x)\equiv x^4 \pmod{2^3}$. According to Theorem~\ref{thm: Composition_of_pure_and_p-type}, $f\circ h$ is $2^5$-Dumas. In fact,
\begin{equation*}
   f\left(h(x)\right)=x^8+16 x^4+96 
\end{equation*}
and $\nu_2\left(f(h(0))\right)=\nu_2(96)=5$ and $\frac{\nu_2(16)}{8-2}=\frac{2}{3}>\frac{5}{8}$. Nevertheless, $h$ is not $2^5$-pure and Theorem~\ref{thm: the_composition_of _two_pure_polynomials_is_pure} can not be applied.
\end{example}

\section{Dynamical irreducibility and eventual stability of families of polynomials}
\label{sec4}
In this section, we will discuss several applications of Theorems ~\ref{thm: Composition_of_pure_and_p-type} and \ref{thm: the_composition_of _two_pure_polynomials_is_pure} to arithmetic dynamics. In the previous section, we introduced Dumas polynomials as a class of irreducible pure polynomials. The following corollary follows directly from Theorem~\ref{thm: the_composition_of _two_pure_polynomials_is_pure}.
\begin{corollary}\label{cor: dumas polynomials are dynamically irreducible}
Let $f$ and $g$ be $p^r$-Dumas polynomials in $\q[x]$, then $f\circ g$ is $p^r$-Dumas. In particular, a $p^r$-Dumas polynomial is dynamically irreducible over $\q$. 
\end{corollary}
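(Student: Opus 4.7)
The plan is to invoke Theorem \ref{thm: the_composition_of _two_pure_polynomials_is_pure} directly, since by Definition \ref{def: dumas criterion} a $p^r$-Dumas polynomial is precisely a $p^r$-pure polynomial that additionally satisfies the coprimality condition $\gcd(r,d)=1$ with its degree. Hence for $f\circ g$ only this fourth condition still needs to be verified; the purity conditions (i), (ii), (iii) of Definition \ref{def: pure polynomials} are handed to us for free.

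Concretely, I would set $d=\deg f$ and $e=\deg g$. The degree-one case $d=1$ is trivial: $f^n$ is then linear for every $n$ and hence irreducible, and $f\circ g$ has degree $e$ with the same constant-term valuation manipulation as in the proof of Theorem \ref{thm: Composition_of_pure_and_p-type} showing it is $p^r$-Dumas. For $d>1$ the hypothesis of Theorem \ref{thm: the_composition_of _two_pure_polynomials_is_pure} is satisfied, so $f\circ g$ is $p^r$-pure of degree $de$. To finish the first assertion, note that $\gcd(r,d)=\gcd(r,e)=1$ implies that no prime divisor of $r$ divides $d$ or $e$, hence none divides $de$; so $\gcd(r,de)=1$. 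All four conditions of Definition \ref{def: dumas criterion} are therefore met, and $f\circ g$ is $p^r$-Dumas.

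For the dynamical irreducibility statement, a straightforward induction on $n$ using the first assertion (applied with $g=f^{n-1}$) shows that $f^n$ is $p^r$-Dumas for every $n\ge 1$ whenever $f$ is $p^r$-Dumas. Since each $p^r$-Dumas polynomial is irreducible over $\q$ by Definition \ref{def: dumas criterion}, every iterate $f^n$ is irreducible, which is precisely the definition of dynamical irreducibility.

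There is essentially no obstacle once Theorem \ref{thm: the_composition_of _two_pure_polynomials_is_pure} is available: the entire argument reduces to the elementary observation that the coprimality condition $\gcd(r,-)=1$ is multiplicative in its second argument, combined with a one-line induction. The only mild care needed is disposing of the degree-one edge case, which is immediate since linear polynomials are trivially irreducible.
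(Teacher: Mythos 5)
Your proposal follows exactly the paper's route: the paper offers no separate argument and simply states that the corollary ``follows directly'' from Theorem~\ref{thm: the_composition_of _two_pure_polynomials_is_pure}, and your write-up supplies precisely the two missing details -- that $\gcd(r,d)=\gcd(r,e)=1$ forces $\gcd(r,de)=1$, and the induction giving irreducibility of all iterates. For $\deg f>1$ this is complete and correct.

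The one flaw is your treatment of the degree-one edge case. You claim that when $d=1$ the ``same constant-term valuation manipulation'' shows $f\circ g$ is $p^r$-Dumas, but Lemma~\ref{lem: f(c)_where_f_is_pure} needs $\nu_p(g(0))>r/d=r$, whereas a $p^r$-Dumas $g$ only gives $\nu_p(g(0))=r$, so the constant term of $f\circ g$ can have valuation strictly larger than $r$ (or the constant term can vanish). Concretely, $f(x)=x-p$ and $g(x)=x+p$ are both $p$-Dumas, yet $f\circ g=x$ is not $p^r$-Dumas for any $r$. So the first assertion of the corollary genuinely fails in degree one; the hypothesis $\deg f>1$ inherited from Theorem~\ref{thm: the_composition_of _two_pure_polynomials_is_pure} is not removable, and the paper implicitly assumes it. This does not damage the dynamical irreducibility conclusion, since linear polynomials are trivially dynamically irreducible, but you should either exclude $d=1$ from the composition statement or not claim to have proved it.
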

\begin{example}
Consider the following trinomial in $\q[x]$
\begin{equation*}
f(x)= x^{d}+ax^{d-1}+p^{2^k};  \text{ $d$ is odd, } k\geq 0, \text{ and } \nu_p(a)> \frac{2^k}{d}.    
\end{equation*}
Note that $\frac{\nu_p(a)}{d-(d-1)}=\nu_p(a)>\frac{2^k}{d}$ and $\operatorname{gcd}(2^k,d)=1$. In this case, $f$ is $p^{2^k}$-Dumas and thus dynamically irreducible over $\q$.
\end{example}
\begin{corollary}\label{cor: composition of dumas and p-type}
Let $g$ be a $p^r$-Dumas polynomial of degree $d$ and $f(x)= ax^e+ p^s h(x)\in\q[x]$ be such that $h(x)\in\q[x]$ with $\nu_p(a)=0$, $\deg(h)<e$, $\nu_p(h)\ge 0$, and $s>\frac{r}{d}$. If $\gcd(r,e)=1$, then $g^n\circ f^m$ is irreducible for all $n,m\geq 1$. In particular, $g^n$ is $f$-stable for any $n\geq 1$. 
\end{corollary}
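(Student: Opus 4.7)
The plan is to reduce the claim to the statement that $g^n\circ f^m$ is a $p^r$-Dumas polynomial for every $n,m\ge 1$, from which irreducibility is automatic. To do this I will first verify that the iterates of $f$ retain the structural shape required by Theorem~\ref{thm: Composition_of_pure_and_p-type}, and then invoke that theorem with $g^n$ playing the role of the pure polynomial.

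The first step is to prove by induction on $m$ that
\[
 f^m(x) = A_m x^{e^m} + p^{s} H_m(x),
\]
with $\nu_p(A_m)=0$, $\deg H_m < e^m$, and $\nu_p(H_m)\ge 0$. The base case $m=1$ is the hypothesis on $f$. For the inductive step, one writes $f^{m+1}=f\circ f^m$ and expands: the leading term $a A_m^{e}\,x^{e^{m+1}}$ has $p$-adic valuation zero, and every other contribution is either a mixed term from expanding $(A_m x^{e^m}+p^{s}H_m)^{e}$ (each of which carries at least one factor of $p^{s}$) or comes from $p^{s}h(f^{m}(x))$ (whose Gaussian valuation is at least $s$ since $\nu_p(h)\ge 0$ and $\nu_p(f^m)=0$). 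A routine degree count gives $\deg H_{m+1} < e^{m+1}$, completing the induction.

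Next, by Corollary~\ref{cor: dumas polynomials are dynamically irreducible}, $g^n$ is $p^r$-Dumas of degree $d^n$ for every $n\ge 1$; in particular it is $p^r$-pure with $d^n>1$. Since $s>\tfrac{r}{d}\ge \tfrac{r}{d^n}$, the shape of $f^m$ obtained above is exactly the hypothesis required by Theorem~\ref{thm: Composition_of_pure_and_p-type}, applied with $g^n$ as the pure polynomial and $f^m$ as the polynomial being substituted. That theorem yields that $g^n\circ f^m$ is $p^r$-pure.

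It remains to verify the coprimality condition needed to upgrade $p^r$-pure to $p^r$-Dumas: one needs $\gcd\bigl(r,\deg(g^n\circ f^m)\bigr)=\gcd(r,d^n e^m)=1$. Since $g$ is $p^r$-Dumas we already have $\gcd(r,d)=1$, and the hypothesis $\gcd(r,e)=1$ is given, so $\gcd(r,d^n e^m)=1$ follows at once. Therefore $g^n\circ f^m$ satisfies the $p^r$-Dumas criterion and is irreducible over $\q$; taking $m$ arbitrary with $n$ fixed then says exactly that $g^n$ is $f$-stable. The main subtlety (and the only nontrivial step) is the inductive description of $f^m$; the rest is a direct bookkeeping application of the results already established in \S\ref{sec3}.
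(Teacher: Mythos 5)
Your proof is correct and follows essentially the same route as the paper: show $f^m$ retains the shape $A_m x^{e^m}+p^sH_m(x)$ (the paper states this as $f^m(x)\equiv bx^{e^m}\pmod{p^s}$ without the explicit induction you supply), apply Theorem~\ref{thm: Composition_of_pure_and_p-type} with $g^n$ as the $p^r$-Dumas polynomial from Corollary~\ref{cor: dumas polynomials are dynamically irreducible}, and conclude via $\gcd(r,d^ne^m)=1$. Your write-up is simply a more detailed version of the paper's argument.
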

\begin{proof}
By Corollary~\ref{cor: dumas polynomials are dynamically irreducible}, $g^n$ is $p^r$-Dumas for any $n\geq 1$. Also, $f^m(x)\equiv bx^{e^m} \pmod{p^s}$ where $b\in\Q$ is such that $\nu_p(b)=0$. Thus, $f^m$ is $p$-type. By virtue of Theorem~\ref{thm: Composition_of_pure_and_p-type}, since $\operatorname{gcd}(r,e)=1$, one has $g^n\circ f^m$ is $p^r$-Dumas.
\end{proof}
\begin{example}
Set 
\begin{align*}
    f(x)&=x^{17}+27 x^{12}+27 x^{10}+162 x^7+729 x^5+4374\\
    &=(x^7+27)(x^5+9)(x^5+18).
\end{align*}
Note that $g_1(x)=x^7+27$ is $3^3$-Dumas with $\frac{r_1}{d_1}=\frac{3}{7}<1$, $g_2(x)=x^5+9$ is $3^2$-Dumas with $\frac{r_2}{d_2}=\frac{2}{5}<1$ and $g_3(x)=x^5+18$ is $3^2$-Dumas with  $\frac{r_3}{d_3}=\frac{2}{5}<1$. This implies that $s_1,s_2,s_3\geq 1$. Since $f(x)\equiv x^{17}\pmod{3}$ and $\gcd(17,7)=\gcd(17,5)=1$, by Corollary \ref{cor: composition of dumas and p-type}, the polynomials $g_1$, $g_2$ and $g_3$ are $f$-stable. In other words, for any $n\geq 1$, the number of irreducible factors of $f^n$ is exactly $3$.
\end{example}
The previous example motivates the following corollary.
\begin{corollary}\label{cor: if f^n is the product of dumas polynomials with some conditions then all the factors are f-stable}
Let $f(x)= ax^e+ p^s h(x)\in\q[x]$ be such that $h(x)\in\q[x]$ with $\nu_p(a)=0$, $\deg(h)<e$, $\nu_p(h)\ge 0$ and $s\ge1$.
 For $n\geq 1$, assume that $f^n(x)=g_1(x)g_2(x)\cdots g_t(x)$ where $g_i$ is irreducible of degree $d_i\ge 1$, $1\le i\le t$. 
 
 If for all $1\leq i\leq t$, the following conditions hold
\begin{itemize}
    \item[i)] $g_i$ is $p^{r_i}$-Dumas for some $r_i\geq 1$,
    \item[ii)] $\gcd(r_i,e)=1$,
    \item[iii)] $s>\frac{r_i}{d_i}$,
\end{itemize}
then $g_1,g_2,\ldots, g_t$ are all $f$-stable. In fact, for any $N\geq n$, the number of irreducible factors of $f^N$ is exactly $t$. Moreover, the irreducible factors $G_i$ of $f^N$ are $p^{r_i}$-Dumas, $1\le i\le t$. 
\end{corollary}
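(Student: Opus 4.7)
The plan is to apply Corollary~\ref{cor: composition of dumas and p-type} to each Dumas factor $g_i$ individually, then combine the resulting irreducibilities via the compositional structure of the iterates. The hypotheses (i)--(iii) placed on every $g_i$ coincide, factor by factor, with the hypotheses of Corollary~\ref{cor: composition of dumas and p-type} (with $g$, $d$, $r$ replaced by $g_i$, $d_i$, $r_i$). Consequently, for every $i\in\{1,\dots,t\}$ and every $k\ge 1$ the polynomial $g_i\circ f^k$ is irreducible over $\q$; by definition this is the $f$-stability of each $g_i$.

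For the factor count of $f^N$ with $N\ge n$, I would use the identity $f^N = f^n\circ f^{N-n}$. Substituting the assumed factorization $f^n = g_1 g_2 \cdots g_t$ gives
\[
f^N(x)\;=\;\prod_{i=1}^{t} g_i\bigl(f^{N-n}(x)\bigr)\;=\;\prod_{i=1}^{t} G_i(x),
\]
where $G_i := g_i\circ f^{N-n}$. Each $G_i$ is irreducible either by the previous step (when $N>n$) or directly by hypothesis (when $N=n$), so $f^N$ has exactly $t$ irreducible factors.

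To confirm that each $G_i$ is $p^{r_i}$-Dumas, I would invoke the observation recorded in the proof of Corollary~\ref{cor: composition of dumas and p-type} that $f^m(x)\equiv b_m x^{e^m}\pmod{p^s}$ for some $b_m\in\q$ with $\nu_p(b_m)=0$; in particular $f^{N-n}$ has the precise form required of the ``$g$'' in Theorem~\ref{thm: Composition_of_pure_and_p-type}. Applying that theorem to the $p^{r_i}$-pure polynomial $g_i$ of degree $d_i$ together with $f^{N-n}$, and using the hypothesis $s > r_i/d_i$, yields that $G_i$ is $p^{r_i}$-pure. Since $G_i$ is moreover irreducible of degree $d_i e^{N-n}$, and since $\gcd(r_i,\,d_i e^{N-n}) = 1$ follows from $\gcd(r_i, d_i)=1$ (because $g_i$ is Dumas) together with $\gcd(r_i, e)=1$ (hypothesis (ii)), the Dumas criterion is verified for $G_i$.

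I expect the only mildly delicate point to be the persistence of the shape $f^m(x) = b_m x^{e^m} + p^s H_m(x)$ with the \emph{same} exponent $s$ through every iteration; this follows by a routine induction on $m$, since in the binomial expansion of $(b_m x^{e^m} + p^s H_m(x))^e$ every summand except the pure leading monomial $b_m^e x^{e^{m+1}}$ carries an extra factor of $p^s$, and the outer additive term $p^s h(f^m(x))$ also contributes only to the $p^s$-part. Beyond this bookkeeping, the corollary reduces entirely to citations of Corollary~\ref{cor: composition of dumas and p-type} and Theorem~\ref{thm: Composition_of_pure_and_p-type}, so no substantive obstacle arises.
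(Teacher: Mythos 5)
Your proposal is correct and follows essentially the same route as the paper: cite Corollary~\ref{cor: composition of dumas and p-type} for the $f$-stability of each $g_i$, decompose $f^N=f^n\circ f^{N-n}=\prod_i g_i\circ f^{N-n}$, and apply Theorem~\ref{thm: Composition_of_pure_and_p-type} to see that each factor is $p^{r_i}$-Dumas. You in fact spell out two details the paper leaves implicit — the verification that $\gcd(r_i, d_i e^{N-n})=1$ (needed to upgrade purity to the Dumas property) and the persistence of the shape $f^m(x)=b_m x^{e^m}+p^sH_m(x)$ — so nothing further is needed.
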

\begin{proof}
 In view of Corollary \ref{cor: composition of dumas and p-type}, one sees that $g_i^n$, $1\le i\le t$, are $f$-stable for any $n\ge 1$. In particular, for any $N\ge n$, one obtains $f^N=f^n(f^{N-n})=g_1\left(f^{N-n}\right) g_2\left(f^{N-n}\right)\cdots g_t\left(f^{N-n}\right)$ where each factor $g_i\left(f^{N-n}\right)$ is irreducible. Moreover, given that $g_i$ is $p^{r_i}$-Dumas and $\gcd(r_i,d_i)=1$, Theorem~\ref{thm: Composition_of_pure_and_p-type} asserts that $g_i\left(f^{N-n}\right)$ is $p^{r_i}$-Dumas, $1\le i \le t$.    
\end{proof}
 One may drop the condition “$s>\frac{r}{d}$” in Corollary~\ref{cor: composition of dumas and p-type} to obtain the following result. 
\begin{corollary}\label{cor: the iterate of dumas when composed with p-type is dumas}
Let $g$ be a $p^r$-Dumas polynomial of degree $d$ and $f(x)= ax^e+ p^s h(x)\in\q[x]$ be such that $h(x)\in\q[x]$ with $\nu_p(a)=0$, $\deg(h)<e$, $\nu_p(h)\ge 0$, and $\gcd(r,e)=1$. There exists an integer $N\geq 1$ such that for all $n\geq N$, $g^n\circ f^k$ is irreducible for all $k\geq 1$. In particular, $g^n$ is $f$-stable for all $n\geq N$.   
\end{corollary}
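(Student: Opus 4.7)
The plan is to compose two ingredients already in hand: the outer polynomial $g^n$, which by Corollary~\ref{cor: dumas polynomials are dynamically irreducible} is $p^r$-Dumas of degree $d^n$ for every $n\ge1$, and the inner polynomial $f^k$, which we will show retains a $p$-type shape modulo $p^s$. The whole point of the corollary, as compared to Corollary~\ref{cor: composition of dumas and p-type}, is that we are no longer assuming $s>r/d$, so Theorem~\ref{thm: Composition_of_pure_and_p-type} cannot be applied directly for small $n$. However, since $d\ge 2$ (the case $d=1$ is immediate because $g^n$ is then linear), the ``slope'' $r/d^n$ of $g^n$ shrinks geometrically in $n$, and eventually falls below $s$. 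Choosing $N$ to be the smallest integer with $d^N>r/s$ will be enough.

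The preparatory step is an induction on $k$ establishing that
\begin{equation*}
f^k(x)=A_k\,x^{e^k}+p^s H_k(x),
\end{equation*}
where $A_k=a^{(e^k-1)/(e-1)}$ satisfies $\nu_p(A_k)=0$, and $H_k\in\q[x]$ satisfies $\nu_p(H_k)\ge 0$ and $\deg H_k<e^k$. The base case $k=1$ is the hypothesis on $f$. For the inductive step I would write $f^{k+1}(x)=a\bigl(f^k(x)\bigr)^e+p^s h(f^k(x))$, expand the $e$-th power by the binomial theorem, and note that every cross term collects at least one factor of $p^s$; since $\nu_p(A_k)=0$ and $\nu_p(H_k),\nu_p(h)\ge 0$, all residual coefficients live in $\Z_{(p)}$, giving the claim for $k+1$.

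With this description in hand, fix any $n\ge N$ and any $k\ge 1$. The polynomial $g^n$ is $p^r$-pure of degree $d^n>1$, and $f^k$ has exactly the shape ``$bx^{e^k}+p^s(\text{lower})$'' required by Theorem~\ref{thm: Composition_of_pure_and_p-type}, with $\nu_p(A_k)=0$, $\deg H_k\le e^k-1$, $\nu_p(H_k)\ge 0$, and $s>r/d^N\ge r/d^n$. Therefore Theorem~\ref{thm: Composition_of_pure_and_p-type} yields that $g^n\circ f^k$ is $p^r$-pure of degree $d^n e^k$.

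Finally, to upgrade $p^r$-pure to $p^r$-Dumas (and hence to irreducible), I would observe that $\gcd(r,d)=1$ from the Dumas hypothesis on $g$ and $\gcd(r,e)=1$ from the hypothesis on $f$ combine to give $\gcd(r,d^n e^k)=1$; thus $g^n\circ f^k$ is $p^r$-Dumas for every $n\ge N$ and every $k\ge 1$, proving the irreducibility statement and in particular the $f$-stability of $g^n$. The only step that carries any real content is the inductive description of $f^k$, and it is routine; I do not anticipate any genuine obstacle.
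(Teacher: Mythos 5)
Your argument is correct and is essentially the paper's own proof: the paper likewise sets $N=\min\{n: s>\frac{r}{d^n}\}$, invokes Corollary~\ref{cor: dumas polynomials are dynamically irreducible} to get that $g^n$ is $p^r$-Dumas of degree $d^n$, and then cites Corollary~\ref{cor: composition of dumas and p-type}, whose proof consists of exactly your remaining steps (checking that $f^k\equiv A_kx^{e^k}\pmod{p^s}$ with $\nu_p(A_k)=0$, applying Theorem~\ref{thm: Composition_of_pure_and_p-type}, and using $\gcd(r,e)=\gcd(r,d)=1$ to upgrade pure to Dumas). You have merely inlined that corollary rather than citing it.
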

\begin{proof}
Let $N=\min\{n: s>\frac{r}{d^n}\}$. For any $n\geq N$, $g^n$ is $p^r$-Dumas of degree $d^n$ by Corollary~\ref{cor: dumas polynomials are dynamically irreducible}. Since $s>\frac{r}{d^N}\geq \frac{r}{d^n}$, Corollary~\ref{cor: composition of dumas and p-type} implies that $g^n\circ f^k$ is $p^r$-Dumas for all $n\geq N$ and $k\geq 1$.
\end{proof}
In Corollary \ref{cor: the iterate of dumas when composed with p-type is dumas}, when $n<N$, the irreducibility of $g^n\circ f$  is not guaranteed. We consider the following example.
\begin{example}
The following polynomials are $3^3$-Dumas and $3$-type, respectively
 \begin{align*}
   g(x)&= x^2+27,\\
   f(x)&=x^2+3x+3.   
 \end{align*}
  We notice that the polynomial $f$ does not satisfy the hypothesis of Corollary \ref{cor: composition of dumas and p-type}. Based on Corollary~\ref{cor: the iterate of dumas when composed with p-type is dumas},  $N=\min\{n: 1>\frac{3}{2^n}\}=2$. Therefore, $g^n\circ f$ is $3^3$-Dumas for all $n\geq 2$. Indeed, 
  \begin{equation*}
     (g^2\circ f)(x)= x^8+12 x^7+66 x^6+54 x^5+27 x^4+27 x^2+27 
  \end{equation*}
 is $3^3$-Dumas and by Corollary~\ref{cor: dumas polynomials are dynamically irreducible}, $g^n\circ f$ is $3^3$-Dumas. Yet,
 \begin{equation*}
 g\circ f=\left(x^2+3\right) \left(x^2+6 x+12\right) 
  \end{equation*}  
  is reducible.
\end{example}
We have discussed the applications of Theorems~\ref{thm: Composition_of_pure_and_p-type} and \ref{thm: the_composition_of _two_pure_polynomials_is_pure} in constructing $p^r$-Dumas polynomials. We recall that $p^r$-pure polynomials are not always irreducible. This motivates questioning the existence of an upper bound on the number of irreducible factors of $p^r$-pure polynomials.  
\begin{proposition}\cite[Theorem 1.2]{anuj}\label{anuj}
Let $f$ be a $p^r$-pure polynomial of degree $d$ in $\q[x]$. Then $f$ has at most $\gcd(d,r)$ irreducible factors over $\q$ and each irreducible factor has degree at least $\frac{d}{\gcd(d,r)}$.
\end{proposition}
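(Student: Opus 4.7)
The plan is to work over $\q_p$ and exploit the multiplicative behavior of Newton polygon slopes under polynomial multiplication. First I would invoke the classical fact that for any monic irreducible polynomial $\tilde g\in \q_p[x]$ of degree $e$, all the roots of $\tilde g$ in $\overline{\q_p}$ share a common $p$-adic valuation, since the absolute Galois group of $\q_p$ acts transitively on these roots and preserves the unique extension of $\nu_p$ to $\overline{\q_p}$. Equivalently, the Newton polygon of such a $\tilde g$ consists of a single segment, whose slope is $\nu_p(\tilde g(0))/e$.

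Next, I would factor $f$ over $\q_p$ as $f = a_d\prod_{j=1}^{s} \tilde g_j$, where each $\tilde g_j$ is monic and irreducible over $\q_p$ of degree $e_j$. Since the multiset of slopes of the Newton polygon of a product is the union of the multisets of slopes of the factors, and since $f$ is $p^r$-pure (so its Newton polygon is a single segment of slope $r/d$), each $\tilde g_j$ must itself have Newton polygon of slope exactly $r/d$. Consequently $\nu_p(\tilde g_j(0)) = e_j r/d$ is a nonnegative integer; setting $m:=d/\gcd(d,r)$, this forces $m\mid e_j$, and in particular $e_j \ge m$ for every $j$.

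To finish, I would take any irreducible factor $g\in\q[x]$ of $f$ of degree $e$. Over $\q_p$, $g$ factors (up to a nonzero constant in $\q_p$) as a product of a subset of the $\tilde g_j$'s, so $e$ is the sum of the corresponding $e_j$'s. Since each such $e_j$ is divisible by $m$, so is $e$, and in particular $e \ge d/\gcd(d,r)$. Summing the degrees of the distinct irreducible factors of $f$ in $\q[x]$ then bounds their count by $d/m = \gcd(d,r)$, as claimed.

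The main technical ingredient to cite is the single-slope property of irreducible polynomials over $\q_p$; everything else is essentially Newton-polygon bookkeeping combined with the multiplicativity of slopes under products. I do not expect any substantive obstacle beyond invoking this standard local-field fact cleanly and tracking the divisibility condition $m\mid e_j$ through the descent from $\q_p$-irreducible factors back to $\q$-irreducible factors.
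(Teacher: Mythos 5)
Your argument is correct. Note, however, that the paper does not prove this proposition at all: it is imported verbatim from the cited reference \cite[Theorem 1.2]{anuj}, so there is no in-paper proof to compare against. Your route is the standard one for such results: pass to $\q_p$, use that every irreducible polynomial over $\q_p$ has a one-slope Newton polygon (Galois transitivity plus the uniqueness of the extension of $\nu_p$ to $\overline{\q_p}$), match slopes via multiplicativity of Newton polygons under products to force each local factor to have slope $r/d$, and then use integrality of $\nu_p(\tilde g_j(0))=e_jr/d$ to get $\frac{d}{\gcd(d,r)}\mid e_j$, which descends to the $\q$-irreducible factors since each is a product of local factors. All the steps check out (in particular $a_0\neq 0$ so $0$ is not a root, and $\gcd\bigl(d/\gcd(d,r),\,r/\gcd(d,r)\bigr)=1$ gives the divisibility), and the degree bound plus summing degrees yields the count $\gcd(d,r)$; this is almost certainly the same Newton-polygon bookkeeping underlying the cited theorem.
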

Dynamically, we can conclude the following result regarding the upper bound on the number of irreducible factors of an iteration of a $p^r$-pure polynomial. 
\begin{theorem}\label{pure polynomials are eventually stable}
Suppose that $f\in\q[x]$ is a $p^r$-pure polynomial of degree $d$. Then for any $n\geq 1$, the iterate $f^n$ has at most $\gcd(d^n,r)$ irreducible factors over $\q$ and each irreducible factor has degree at least $\frac{d^n}{\gcd(d^n,r)}$. Moreover, $f$ is eventually stable over $\q$. 
\end{theorem}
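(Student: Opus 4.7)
The plan is to leverage the two results already in place: Corollary \ref{cor: iterates of a pure polynomial are pure}, which guarantees that $p^r$-purity is preserved under iteration (when $\deg f > 1$), and Proposition \ref{anuj}, which bounds both the number of irreducible factors and the degree of each irreducible factor of any single $p^r$-pure polynomial. The theorem will follow by applying the second result to each iterate produced by the first.

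More precisely, I would first dispose of the degenerate case $d=1$, where every iterate $f^n$ is linear and each of the two claimed bounds reduces to a triviality. For $d \geq 2$, Corollary \ref{cor: iterates of a pure polynomial are pure} yields that $f^n$ is itself $p^r$-pure, now of degree $d^n$, for every $n \geq 1$. Feeding $f^n$ into Proposition \ref{anuj} with the role of the degree played by $d^n$ produces exactly the bounds claimed in the theorem: at most $\gcd(d^n, r)$ irreducible factors over $\q$, each of degree at least $d^n/\gcd(d^n, r)$.

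For the eventual stability assertion, I observe that $\gcd(d^n, r) \leq r$ for every $n \geq 1$, so the number of irreducible factors of $f^n(x) = f^n(x) - 0$ is uniformly bounded above by the constant $r$, which depends only on $f$ and not on $n$. This is exactly the definition of eventual stability of $(f, 0)$ over $\q$, with admissible constant $C(f, 0) = r$. There is no real obstacle to overcome at this stage: the technical content has been absorbed into Theorem \ref{thm: the_composition_of _two_pure_polynomials_is_pure} (from which Corollary \ref{cor: iterates of a pure polynomial are pure} was deduced) and into the cited Proposition \ref{anuj}; the final step is essentially bookkeeping plus the elementary inequality $\gcd(d^n, r) \leq r$.
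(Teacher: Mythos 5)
Your proposal is correct and follows essentially the same route as the paper: both deduce the factor and degree bounds by combining Corollary \ref{cor: iterates of a pure polynomial are pure} with Proposition \ref{anuj} applied to the $p^r$-pure iterate $f^n$ of degree $d^n$. For eventual stability the paper phrases the uniform bound via a stabilizing index $N$ with $c_N=\gcd(d^N,r)$, while you simply use $\gcd(d^n,r)\le r$; these are the same observation, and your explicit treatment of the $d=1$ case (where the cited corollary's hypothesis $\deg f>1$ fails) is a small point of extra care.
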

\begin{proof}
 By Corollary \ref{cor: iterates of a pure polynomial are pure}, the iterate $f^n$ is $p^r$-pure and by Proposition \ref{anuj} it has at most $\gcd(d^n,r)$ irreducible factors over $\q$ and each irreducible factor has degree at least $\frac{d^n}{\gcd(d^n,r)}$. Moreover, let $c_n=\gcd(d^n,r)$ and define $k_n$ to be the number of irreducible factors of $f^n$. Observe that the set $\{c_1,\ldots,c_n,\ldots\}$ is finite as $c_n\le r$. Therefore, there must exist an $N\geq 1$ such that for all $n\geq N$, one has $\gcd(d^n,r)=c_n=c_N=\gcd(d^N,r)$. In particular, one obtains that $k_n\leq c_n\leq c_N$ for all $n\geq 1$. It follows that the number of irreducible factors $k_n$ of $f^n$ is at most $c_N$ for all $n\ge 1$, hence $f $ is eventually stable.  
\end{proof}
Observe that Corollary 6 in \cite{hamblen} (except the case when $c=1$) follows as a corollary of Theorem~\ref{pure polynomials are eventually stable} and Corollary~\ref{cor: dumas polynomials are dynamically irreducible}  
\begin{corollary}\label{cor: x^d+c}
Let $f(x)=x^d+c \in \q[x]$. Then $f$ is eventually stable whenever $c\neq 0$ is not the reciprocal of an integer.
\end{corollary}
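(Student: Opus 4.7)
The plan is to realize $f(x)=x^d+c$ as a $p^r$-pure polynomial for a suitable prime $p$ and exponent $r$, then invoke Theorem \ref{pure polynomials are eventually stable}. The hypothesis that $c$ is not the reciprocal of an integer is exactly what one needs to find such a prime in the numerator of $c$.

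Write $c=a/b$ in lowest terms with $a\in\Zz\setminus\{0\}$ and $b\in\Zz_{>0}$, so that $\gcd(a,b)=1$. The assumption that $c\ne 0$ is nonzero and is not a reciprocal of an integer translates to $|a|\ge 2$, because $c=1/m$ for some nonzero $m\in\Zz$ would force $a\mid b$, hence $a=\pm1$. Consequently, there exists a rational prime $p$ dividing $a$. Since $\gcd(a,b)=1$, we have $p\nmid b$, and therefore $r:=\nu_p(c)=\nu_p(a)\ge 1$.

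Next I would verify that $f(x)=x^d+c$ is $p^r$-pure in the sense of Definition \ref{def: pure polynomials}. The leading coefficient equals $1$, so $\nu_p(a_d)=0$; the constant term is $c$, so $\nu_p(a_0)=r$; and all intermediate coefficients vanish, so the inequality $\nu_p(a_i)/(d-i)\ge r/d$ holds trivially for $1\le i\le d-1$. Thus all three conditions of Definition \ref{def: pure polynomials} are met.

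At this point the conclusion is immediate: Theorem \ref{pure polynomials are eventually stable} applied to the $p^r$-pure polynomial $f(x)=x^d+c$ shows that $f$ is eventually stable over $\Qq$. There is no real obstacle here; the only subtlety worth isolating is the arithmetic observation that the exclusion "$c$ is not the reciprocal of an integer" is precisely what guarantees a prime divisor of the numerator of $c$, which in turn produces the required pure structure. The case $c=1$ (which is a reciprocal of an integer) is legitimately excluded by this method, matching the statement of the corollary.
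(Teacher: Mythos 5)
Your proposal is correct and follows essentially the same route as the paper: locate a prime $p$ dividing the numerator of $c$ (which exists precisely because $c$ is not a reciprocal of an integer), observe that $x^d+c$ is $p^{\nu_p(c)}$-pure, and apply Theorem \ref{pure polynomials are eventually stable}. You simply spell out the verification of Definition \ref{def: pure polynomials} and the arithmetic of the hypothesis in more detail than the paper does.
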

\begin{proof}
let $c=\frac{a}{b}$ such that $a\neq \pm 1$, $b\neq 0$ and $\operatorname{gcd}(a,b)=1$. There exists a prime $p$ such that $\nu_p(c)>0$. By Theorem \ref{pure polynomials are eventually stable}, $f$ is $p^{\nu_p(c)}$-pure, hence eventually stable.  
\end{proof}
\begin{Remark}
In Corollary \ref{cor: x^d+c}, any iterate $f^n$ has at most $\max\{\gcd(\nu_p(c),d^m):m\geq 1\}$ irreducible factors. In particular, if $\gcd(\nu_p(c),d)=1$, then $f$ is dynamically irreducible by Corollary~\ref{cor: dumas polynomials are dynamically irreducible}.
\end{Remark}
The following definition was introduced in \cite{HeathBrown2019} for polynomials defined over a finite field. 
\begin{definition}
Let $I$ be a set of polynomials in $\q[x]$ with positive degrees. We say $I$ is a {\em dynamically irreducible set} in $\q[x]$ if any polynomial formed by composition of polynomials in $I$ is irreducible over $\q$. 
\end{definition}
One notices that our work up to this point has focused on dynamically irreducible sets of the form $I=\{f\}$ where $f\in\q[x]$. The set of  $p$-Eisenstein polynomials for a particular prime $p$ of degree at least $2$ is another example of a dynamically irreducible set. In light of our results, we display the following example. 
\begin{example}
Let $p$ and $q$ be rational primes. Define 
\begin{equation*}
  E(p):= \{f\in \q[x]: f \text{ is } p\text{-Eisenstein, } p \nmid \operatorname{deg}(f) \text{ and } \operatorname{deg}(f)>1\}.  
\end{equation*}
 The set $E(p)$ is  dynamically irreducible over $\q$. Also, define
\begin{equation*}
    D(p,q):=\{f\in \q[x]: f \text{ is } p^{q^k}\text{-Dumas with } \deg(f)>q^k \textrm{ for some } k\geq 1\}.
\end{equation*}
  In fact, $D(p,q)$ is a dynamically irreducible set because if $f$ and $g$ are $p^{q^k}$-Dumas and $p^{q^m}$-Dumas respectively, Corollary \ref{cor: Composition of pure with p-type when when r/d<1} ensures that the composition $f\circ g$ (respectively, $g\circ f$) is $p^{q^k}$-Dumas (respectively, $ p^{q^m}$-Dumas). 
  
  Moreover, the set $E(p)\cup D(p,q)$ is also dynamically irreducible because if $f\in D(p,q)$ (respectively, $ f\in E(p)$) and $g \in E(p)$  (respectively, $g\in D(p,q)$), then $f\circ g \in D(p,q)$  (respectively, $f\circ g\in E(p)$), see Corollary \ref{cor: Composition of pure with p-type when when r/d<1} (respectively, Proposition \ref{prop: If g is p-eisensten and f is p-type then fg is p-eisenstein}). 
\end{example}

We can extend this definition further for eventually stable polynomials.
\begin{definition}\label{def: eventually_stable_set}
 We say $S$ is an {\em eventually stable set} $^1$\footnote{\textsuperscript{1}Thanks to Wade Hindes for suggesting this definition.} in $\q[x]$ if there exists $c\ge 1$ such that the number of irreducible factors of any polynomial formed by composition of polynomials in $S$ is at most $c$.
\end{definition}
Based on our results, an example of an eventually stable set is the following.
\begin{example}
Let $p$ be a prime and $R$ be a finite set of positive integers. Define 
\begin{equation*}
    S(p,R):=\{f\in \q[x]: f \text{ is }p^{r}\text{-pure for some } r\in R \text{ such that } \deg(f)>r\}.  
\end{equation*}
If $f$ is $p^{r_1}$-pure and $g$ is $p^{r_2}$-pure such that $\deg(f)>r_1$ and $\deg(g)>r_2$, we know from Corollary~\ref{cor: Composition of pure with p-type when when r/d<1} that the composition $f\circ g$ (respectively, $g\circ f$) is $p^{r_1}$-pure (respectively, $p^{r_2}$-pure). Also, Theorem~\ref{thm: the_composition_of _two_pure_polynomials_is_pure} ensures that the iterates of $f$ (respectively, $g$) are $p^{r_1}$-pure (respectively, $p^{r_2}$-pure). It follows that the number of irreducible factors of any arbitrary composition is at most $\max(R)$, see Proposition \ref{anuj}.
\end{example}
%Let $F\subset \q[x]$ be nonempty. We define the set $I_F$ to be the set
%$$\{f_1\circ f_2\circ\cdots\circ f_k: f_i\in F, k\ge 1\}.$$
A consequence of Definition~\ref{def: eventually_stable_set} and Corollary~\ref{cor: the iterate of dumas when composed with p-type is dumas} is the following.
\begin{corollary}\label{cor: finite_set_pure_polnomial_induces_eventually_stable_set}
For a fixed prime $p$, let $F$ be any set of $p^{r_f}$-pure polynomials $f$ such that the set $\{r_f:f\in F\}$ is finite. Then there exists an $N\geq 1$ such that the set $F^N:=\{f^N:f\in F\}$ is an eventually stable set. In particular, If all polynomials $f\in F$ are $p^{r_f}$-Dumas polynomials such that $\gcd\left(\deg(f),r_g\right)=1$ for any $f,g\in F$, then $F^N$ is a dynamically irreducible set.  
\end{corollary}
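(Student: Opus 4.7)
The plan is to choose $N$ so large that every composition of iterates from $F^N$ inherits $p^r$-purity (with $r$ dictated by the outermost factor), after which Proposition~\ref{anuj} will give a uniform bound on the number of irreducible factors. Concretely, set $r := \max\{r_f : f \in F\}$ (finite by hypothesis), and fix $N \geq 1$ with $2^N > r$. Polynomials in $F$ of degree $1$ are irreducible and their presence in a composition only inserts a linear factor, so I may restrict attention to those $f \in F$ with $\deg(f) \geq 2$; for such $f$ one has $d_f^N \geq 2^N > r \geq r_f$. By Corollary~\ref{cor: iterates of a pure polynomial are pure}, each $f^N$ is $p^{r_f}$-pure of degree $d_f^N$.

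The main inductive claim is that any composition $g = f_1^N \circ f_2^N \circ \cdots \circ f_k^N$ with $f_i \in F$ is $p^{r_{f_1}}$-pure. The base case $k=1$ is the observation above. For the induction step, suppose $h := f_2^N \circ \cdots \circ f_k^N$ is $p^{r_{f_2}}$-pure of degree $D \geq d_{f_2}^N > r$. Writing $h(x) = bx^D + \sum_{i<D} c_i x^i$, Definition~\ref{def: pure polynomials} forces $\nu_p(c_i) \geq (D-i)r_{f_2}/D > 0$, and since each $\nu_p(c_i) \in \Z$, the exponent $s := \min_{0 \leq i < D} \nu_p(c_i)$ satisfies $s \geq 1$. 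Rewriting $h(x) = bx^D + p^s \tilde h(x)$ with $\nu_p(\tilde h) \geq 0$, and using $d_{f_1}^N > r_{f_1}$ so that $s \geq 1 > r_{f_1}/d_{f_1}^N$, Theorem~\ref{thm: Composition_of_pure_and_p-type} applied to the outer $p^{r_{f_1}}$-pure polynomial $f_1^N$ (of degree $\geq 2$) and the inner $h$ yields that $f_1^N \circ h$ is $p^{r_{f_1}}$-pure, completing the induction. Proposition~\ref{anuj} then bounds the number of irreducible factors of $g$ by $\gcd(\deg g, r_{f_1}) \leq r_{f_1} \leq r$, so $F^N$ is eventually stable in the sense of Definition~\ref{def: eventually_stable_set} with constant $c = r$.

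For the second assertion, assume every $f \in F$ is $p^{r_f}$-Dumas and $\gcd(\deg f, r_g) = 1$ for all $f, g \in F$. Any composition $g = f_1^N \circ \cdots \circ f_k^N$ is $p^{r_{f_1}}$-pure of degree $\prod_i d_{f_i}^N$ by the inductive claim, and the coprimality hypothesis gives $\gcd(d_{f_i}, r_{f_1}) = 1$ for every $i$, hence $\gcd(d_{f_i}^N, r_{f_1}) = 1$ and therefore $\gcd(\prod_i d_{f_i}^N, r_{f_1}) = 1$. Thus $g$ is $p^{r_{f_1}}$-Dumas and hence irreducible over $\q$, proving $F^N$ is a dynamically irreducible set.

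The main obstacle I anticipate is passing from the fractional purity estimate $\nu_p(c_i) \geq (D-i) r_{f_2}/D$, which is strictly less than $1$ for the highest-degree non-leading term when $D > r_{f_2}$, to an honest integer exponent $s \geq 1$ that can be fed into Theorem~\ref{thm: Composition_of_pure_and_p-type}. Integrality of the $p$-adic valuation settles this once $D > r_{f_2}$, and the uniform choice $2^N > r$ is exactly what guarantees this at every stage of the induction; without such a uniform $N$, the $s$ produced by the inner composition might not exceed $r_{f_1}/d_{f_1}^N$ for the next outer polynomial and the induction could break.
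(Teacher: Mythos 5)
Your argument is correct and follows essentially the same route as the paper's: choose $N$ uniformly so that Theorem~\ref{thm: Composition_of_pure_and_p-type} can be chained through an arbitrary composition, conclude that every composite is $p^{r_{f_1}}$-pure with the exponent dictated by the outermost factor, and then invoke Proposition~\ref{anuj}. The only substantive difference is the choice of $N$: the paper sets $s=\min_{f\in F}\nu_p\bigl(f(x)-ax^{d_f}\bigr)$ and takes the least $N$ with $s>r/d^N$ (which can be smaller), while you use the cruder bound $s\ge 1$ forced by integrality of $p$-adic valuations and require $2^N>r$; both work, and your version of the induction actually makes explicit a step the paper only asserts (that the inner composite, being pure, is $p$-type with $s\ge1$).

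One caveat. Your dismissal of degree-one members of $F$ is not justified as stated: if a linear $\ell\in F$ occurs as the \emph{outermost} (or an intermediate) factor, then $\ell^N\circ G=aG+c$ is not ``$G$ together with an inserted linear factor,'' and neither its purity exponent nor its factor count is controlled by that of $G$; Theorem~\ref{thm: Composition_of_pure_and_p-type} is unavailable there since it requires the outer degree to exceed $1$. This is, however, a defect shared by the paper's own proof (with $d=\min_f d_f=1$ no $N$ satisfying $s>r/d^N$ need exist), so the statement should be read as implicitly assuming $\deg f\ge 2$ for all $f\in F$; under that reading your proof is complete.
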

\begin{proof}
Define $s_f:=\nu_p(f(x)-ax^{d_f})$ where $a$ is the leading coefficient of $f\in F$ whose degree is $d_f$.
Set ~$r=\max\{r_f:f\in F\}$, $s=\min\{s_f: f\in F\}$ and $d=\min\{d_f: f\in F\}$. Let $N \ge 1$ be the least integer such that $s>\frac{r}{d^N}$. Note that for any $f\in F$, $s_f\geq s>\frac{r}{d^N}\geq \frac{r_f}{d_f^N}$. Indeed, the set $F^N=\{f^N:f\in F\}$ is an eventually stable set and the number of irreducible factors of any arbitrary composition is at most $r$. Suppose $f^N\circ G$ is an arbitrary composition of polynomials in $F^N$ where the degree of $G$ is $D$. We know that $f^N$ is $p^{r_f}$-pure by Corollary \ref{cor: iterates of a pure polynomial are pure}. Assume $\alpha=\nu_p(G(x))-bx^D$ such that $b$ is the leading coefficient of $G$. We have $\alpha\geq s>\frac{r}{d^N}\geq\frac{r_f}{d_f^N}$. Using Corollary \ref{thm: Composition_of_pure_and_p-type}, $f^N\circ G$ is $p^{r_f}$-pure with at most $\gcd(r_f,d_f^ND)\leq r_f\leq r$ irreducible factors and hence $F^N$ is an eventually stable set. If any $f\in F$ is $p^{r_f}$-Dumas and $\gcd(d_f,r_g)=1$ for any $g\in F$, then it is easy to see that the degree of $f^N\circ G$ is relatively prime to $r_g$ for any $f,g\in F$. It follows that $f^N\circ G$ is $p^{r_f}$-Dumas. Thus, $F^N$ is a dynamically irreducible set. 
\end{proof}
\section{Eventually $p^r$-Pure Polynomials}
\label{sec5}
In this section, we discuss polynomials that are not $p^r$-pure but one of the iterates is $p^r$-pure. Consider the following example. 
\begin{example}
The polynomial
\begin{equation*}
    f(x)=-x^3-\frac{39 x^2}{7}-\frac{72 x}{7}-\frac{31}{35}
\end{equation*}
is not $p$-type for any prime $p$. Yet,
\begin{align*}
    f^2(x)&=x^9+\frac{54 x^8}{7}+\frac{1287 x^7}{49}+\frac{56607 x^6}{1715}-\frac{53919 x^5}{1715}-\frac{36864 x^4}{245}\\&-\frac{696429 x^3}{8575}+\frac{1465479 x^2}{8575}+\frac{356184 x}{1715}-\frac{1090557}{6125}
\end{align*}
is $3^3$-pure.
\end{example}
The previous example motivates the following definition.
\begin{definition}\label{def: eventually_pure}
Let $p$ be a prime and $r$ be a positive integer. A polynomial $f\in \q[x]$ of degree $d$ is said to be {\em eventually $p^r$-pure} if $f^n$ is $p^r$-pure for some $n\ge 1$. Similarly, $f$ is {\em eventually $p^r$-Dumas} if $f^n$ is $p^r$-Dumas for some $n\ge 1$.
\end{definition}
The following corollary follows directly from Theorem \ref{pure polynomials are eventually stable} and \cite[Lemma 1.2]{Odoni}. 
\begin{corollary}
\label{cor1}
An eventually $p^r$-pure polynomial is eventually stable. In particular, an eventually $p^r$-Dumas polynomial is dynamically irreducible.  
\end{corollary}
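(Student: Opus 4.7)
The plan is to combine Theorem~\ref{pure polynomials are eventually stable} (respectively, Corollary~\ref{cor: dumas polynomials are dynamically irreducible}) with the monotonicity principle that underlies Odoni's Lemma~1.2: \emph{the number of irreducible factors of $f^m$ is a non-decreasing function of $m$}. This principle follows because if $f^m = g_1 \cdots g_t$ is the irreducible factorization over $\q$, then for every $k \geq 1$ one has the factorization
\begin{equation*}
f^{m+k}(x) = (g_1 \circ f^k)(x) \cdots (g_t \circ f^k)(x),
\end{equation*}
and each factor $g_i \circ f^k$ contributes at least one irreducible factor to $f^{m+k}$.

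To handle the first assertion, suppose $f$ has degree $d$ and let $n \geq 1$ be such that $f^n$ is $p^r$-pure. By Corollary~\ref{cor: iterates of a pure polynomial are pure} applied to $f^n$, every polynomial of the form $f^{nk} = (f^n)^k$ is $p^r$-pure of degree $d^{nk}$, so by Proposition~\ref{anuj} it has at most $\gcd(d^{nk}, r) \leq r$ irreducible factors over $\q$. For an arbitrary $m \geq 1$, choose $k = \lceil m/n \rceil$ so that $m \leq nk$; the monotonicity principle then gives that the number of irreducible factors of $f^m$ is at most that of $f^{nk}$, hence at most $r$. Taking $C(f,0) := r$ shows that $f$ is eventually stable over $\q$.

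For the Dumas case, I would argue identically, replacing Proposition~\ref{anuj} by Corollary~\ref{cor: dumas polynomials are dynamically irreducible}: the iterate $f^n$ being $p^r$-Dumas guarantees that $(f^n)^k = f^{nk}$ is $p^r$-Dumas (hence irreducible) for every $k \geq 1$, and the monotonicity principle then forces $f^m$ to be irreducible for every $m \geq 1$, so $f$ is dynamically irreducible. There is no significant obstacle; the only small subtlety worth verifying is that the monotonicity principle really does apply over $\q$ (equivalently, that $g_i \circ f^k$ is a nonconstant polynomial so that the factor count is well defined), which is automatic since $\deg f \geq 1$ and $\deg g_i \geq 1$. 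The proof amounts to observing that eventual stability and dynamical irreducibility of an iterate propagate downwards to the original map.
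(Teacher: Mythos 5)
Your proposal is correct and follows essentially the same route as the paper, which derives the corollary from Theorem~\ref{pure polynomials are eventually stable} together with Odoni's Lemma~1.2; your explicit ``monotonicity principle'' via $f^{m+k}=(g_1\circ f^k)\cdots(g_t\circ f^k)$ is exactly the content of that lemma, and your bound $\gcd(d^{nk},r)\le r$ on the factors of $f^{nk}=(f^n)^k$ matches the paper's use of Corollary~\ref{cor: iterates of a pure polynomial are pure} and Proposition~\ref{anuj}. The only cosmetic difference is that you spell out the downward propagation that the paper leaves to the citation.
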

Our aim is to provide a complete characterization of eventually $p^r$-pure polynomials of degree $d>r$. First, we introduce the following proposition.
\begin{proposition}\label{fg is pure then f(x+c)is pure}
Let $r$ be a positive integer and $p$ be a prime. Suppose $f,g$ are polynomials in $\q[x]$ with degrees $d$ and $e$ respectively. If $f\circ g$ is a $p^r$-pure polynomial, where $d>r$, and $g$ is $p$-type, then $f\left(x+g(0)\right)$ is $p^r$-pure.
\end{proposition}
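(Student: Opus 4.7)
The strategy is to deduce the $p$-adic valuations of the roots of $F(x):=f(x+g(0))$ from those of the roots of $f\circ g$, working in the algebraic closure $\overline{\q}$ equipped with an extension of $\nu_p$. Setting $c:=g(0)$ and $G(x):=g(x)-c$, one has $F\circ G=f\circ g$ and $G(0)=0$. Writing $g(x)=b_ex^e+\ldots+b_1x+b_0$, the $p$-type hypothesis on $g$ gives $\nu_p(b_e)=0$ and $\nu_p(b_k)\geq 1$ for $0\leq k\leq e-1$. Conditions (i) and (ii) of Definition~\ref{def: pure polynomials} for $F$ are immediate: the leading coefficient of $F$ equals $a_d$, which has $\nu_p=0$ because $f\circ g$ is $p^r$-pure; and the constant term of $F$ is $f(g(0))=(f\circ g)(0)$, whose $p$-adic valuation equals $r$ for the same reason.

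The heart of the proof is verifying condition (iii), and I would establish it by showing that every root $\alpha$ of $F$ in $\overline{\q}$ satisfies $\nu_p(\alpha)=r/d$. Since $f\circ g$ is $p^r$-pure of degree $de$, its Newton polygon consists of a single segment of slope $r/(de)$, so every root $\beta$ of $f\circ g$ in $\overline{\q}$ satisfies $\nu_p(\beta)=r/(de)$. Given a root $\alpha$ of $F$, pick any $\beta$ with $G(\beta)=\alpha$ (such a $\beta$ exists as a root of $G(x)-\alpha$); then $(f\circ g)(\beta)=F(G(\beta))=F(\alpha)=0$, so $\beta$ is a root of $f\circ g$ and $\nu_p(\alpha)=\nu_p(G(\beta))$. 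In the expansion $G(\beta)=\sum_{k=1}^{e}b_k\beta^k$, the leading term $b_e\beta^e$ has valuation exactly $r/d$, whereas for $1\leq k\leq e-1$ each $b_k\beta^k$ has valuation at least $1+kr/(de)$. The inequality $1+kr/(de)>r/d$ simplifies to $(e-k)r<de$, which is guaranteed by $d>r$. By the ultrametric property, $\nu_p(\alpha)=r/d$ exactly.

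With the valuations of all $d$ roots of $F$ in hand, I would finish by factoring $F(x)=a_d\prod_{i=1}^{d}(x-\alpha_i)$ in $\overline{\q}[x]$ and reading off the coefficients via elementary symmetric polynomials: the coefficient of $x^i$ equals $\pm a_d e_{d-i}(\alpha_1,\ldots,\alpha_d)$, and the ultrametric inequality yields $\nu_p(A_i)\geq (d-i)r/d$, which is precisely condition (iii) of Definition~\ref{def: pure polynomials}. The main obstacle is the strict-inequality step in the previous paragraph, namely verifying that $b_e\beta^e$ dominates every $b_k\beta^k$ with $k<e$ in $G(\beta)$; this is where the hypothesis $d>r$ is used, as it is exactly what pushes $(e-k)r$ strictly below $de$ uniformly in $k$ and thereby forces $\nu_p(G(\beta))$ to equal $r/d$ rather than to exceed it.
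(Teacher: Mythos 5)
Your proof is correct, but it takes a genuinely different route from the paper's. The paper argues entirely on the level of coefficients: it first rules out coefficients of $f$ with negative $p$-adic valuation by a contradiction in the expansion of $f(g(x))$, then invokes Lemma~\ref{lemma: If f composed with g is p-type then f(x+g(0)) and g(x)-g(0) is p-type} to see that $F(x)=f(x+g(0))$ is $p$-type, and finally assumes a maximal index $k$ at which condition (iii) fails for $F$ and derives a contradiction by isolating the monomial $a_kb_e^kx^{ek}$ in $F(G(x))$ and using the Purity Lemma (Lemma~\ref{lem: purity_lemma}) to show no other contribution in degree $ek$ can repair the defect. You instead work with roots: purity of $f\circ g$ forces every root $\beta$ to have $\nu_p(\beta)=r/(de)$, the ultrametric inequality (using $\nu_p(b_e)=0$, $\nu_p(b_k)\geq 1$ for $k<e$, and the hypothesis $d>r$ to make $b_e\beta^e$ strictly dominant) shows every root $\alpha=G(\beta)$ of $F$ has $\nu_p(\alpha)=r/d$ exactly, and the elementary symmetric functions then deliver conditions (i)--(iii). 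This buys you two simplifications: the exclusion of negative-valuation coefficients of $f$ comes for free from $\nu_p(A_i)=\nu_p(a_d)+\nu_p(e_{d-i}(\alpha))\geq 0$, so you do not need the paper's preliminary step or its $p$-type lemma; and the role of the hypothesis $d>r$ is cleanly localized in the single inequality $(e-k)r<de$. The trade-off is that you must import the standard correspondence between Newton polygon slopes and root valuations over an extension of $\nu_p$ to $\overline{\q}$, which the paper's coefficient-level argument avoids; it would be worth stating that correspondence explicitly (with a reference such as \cite{prasolov_polynomials_2004}) since the paper only ever uses Newton polygons descriptively. Two cosmetic points: in verifying condition (i) you should say that the leading coefficient of $f\circ g$ is $a_db_e^d$ and combine $\nu_p(a_db_e^d)=0$ with $\nu_p(b_e)=0$ to conclude $\nu_p(a_d)=0$; and note that condition (ii) also follows from your root computation, since $\nu_p(A_0)=\nu_p(a_d)+\sum_i\nu_p(\alpha_i)=d\cdot r/d=r$ exactly.
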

\begin{proof}
 In order to use Lemma~\ref{lemma: If f composed with g is p-type then f(x+g(0)) and g(x)-g(0) is p-type}, we shall show that $\nu_p(f)=0$. Assume that $f(x)=ax^d+c_{d-1}x^{d-1}+\ldots+c_ix^i+\ldots+c_0$ is such that $\nu_p(c_i)<0$ with $i$ being the largest such integer, $0\le i\le d-1$. We write $g(x)=b_ex^e+p^sG(x)$ for some $G\in \q[x]$ with $\nu_p(G)=0$, $\nu_p(b_e)=0$ and $s\ge1$. One sees that
\begin{equation*}
f(g(x))=a{(b_ex^e+p^sG(x))}^d+\ldots+c_i{(b_ex^e+p^sG(x))}^i+\ldots+ c_0.    
\end{equation*}
The coefficient $c_ib^{i}_e$ has negative $p$-adic valuation, yet there is no monomial of any expansion of $f(g(x))$ that has degree $ei$ and a coefficient of negative $p$-adic valuation. Thus, such $c_i$ does not exist and $\nu_p(f)=0$.  Let $h(x)=f(x+g(0))$. By Lemma \ref{lemma: If f composed with g is p-type then f(x+g(0)) and g(x)-g(0) is p-type}, $h$ is $p$-type. Moreover, $\nu_p(h(0))=\nu_p(f(g(0))=r$, see Lemma \ref{lem: f(c)_where_f_is_pure}. So we are left with showing that $h$ satisfies iii) in Definition \ref{def: pure polynomials}. 

Suppose $h(x)=a_dx^d+\ldots+a_0$ and $g(x)-g(0)=b_ex^e+\ldots+ b_1x$. We assume that $k$, $0<k<d$, is the maximum positive integer such that $a_k$ does not satisfy iii), i.e.,  $\frac{\nu_p(a_k)}{d-k}< \frac{r}{d}$. Given that
\begin{equation*}
f(g(x))=h\left(g(x)-g(0)\right)=a_d{\left(b_ex^e+\ldots+b_1x\right)}^d+\ldots + a_k{\left(b_ex^e+\ldots+b_1x\right)}^k+\ldots+a_0,
\end{equation*}
the monomial $a_kb_e^kx^{ek}$ doesn't satisfy condition iii) as $\frac{\nu_p(a_k)}{de-ek}< \frac{r}{de}$. Yet, when added with monomials of the same degree, the sum should satisfy iii) as $f\circ g$ is $p^r$-pure. Thus, there has to be other monomials in the expansion of $h(g(x)-g(0))$ of degree $ek$ whose coefficients have $p$-adic valuation less than $\frac{(d-k)r}{d}$. For $j$, $0<j<k$, the monomials in the expansion of $a_j{(b_ex^e+\ldots+b_1x)}^j$ have degrees strictly less than $ek$. If $k<j\leq d$, then $\frac{\nu_p(a_j)}{d-j}\geq \frac{r}{d}$ and by Lemma~\ref{lem: purity_lemma}, $\frac{\nu_p(c)}{t-de}\geq \frac{r}{de}$ for any monomial $cx^t$ in the expansion of $a_j{(b_ex^e+\ldots+b_1x)}^j$. Therefore, such $k$ does not exist and $f(x+g(0))$ is $p^r$-pure.
\end{proof}
Before introducing the main theorem of this section, we first prove the following lemma.
\begin{lemma}\label{cor: negative_valuation}
If $f$ is a $p^r$-pure polynomial and $c\in \q$ is such that $\nu_p(c)\le 0$, then  $\nu_p\big(f(c)\big)\le 0$. 
\end{lemma}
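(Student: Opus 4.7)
The strategy is to show that the leading monomial $a_d c^d$ strictly dominates every other summand of $f(c)$ in $p$-adic valuation, so the ultrametric inequality becomes an equality and pins down $\nu_p(f(c))$ exactly. Write $f(x)=a_d x^d+\ldots+a_0$ and set $v:=\nu_p(c)\le 0$. Since $\nu_p(a_d)=0$, the leading term contributes $\nu_p(a_d c^d)=dv\le 0$.

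For each intermediate index $0<i<d$, the $p^r$-pure condition $\nu_p(a_i)\ge (d-i)r/d$ from Definition~\ref{def: pure polynomials} gives
\[
\nu_p(a_i c^i)-dv \;\ge\; (d-i)\!\left(\frac{r}{d}-v\right) \;>\; 0,
\]
because $v\le 0 < r/d$ (recall $r\ge 1$ in the definition of $p^r$-purity). The constant term is even easier: $\nu_p(a_0)=r>0\ge dv$. Hence $a_d c^d$ is the unique summand of $f(c)$ that attains the minimum $p$-adic valuation, and the strong triangle inequality upgrades to equality, yielding $\nu_p(f(c))=dv\le 0$.

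I do not foresee a substantial obstacle; the lemma is the mirror companion of Lemma~\ref{lem: f(c)_where_f_is_pure}, which handled the regime $\nu_p(c)>r/d$, and the proof is essentially the same Newton-polygon computation read from the opposite end. The only point that warrants attention is the strictness of the inequality $v<r/d$, which is needed to rule out cancellation among terms of equal valuation; this strictness is guaranteed precisely by the combined hypotheses $\nu_p(c)\le 0$ and $r\ge 1$.
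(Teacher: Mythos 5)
Your proof is correct and follows essentially the same route as the paper: both arguments show that for every $i<d$ the purity bound $\nu_p(a_i)\ge (d-i)r/d$ together with $\nu_p(c)\le 0<r/d$ forces $\nu_p(a_ic^i)>d\nu_p(c)$, so the leading term uniquely attains the minimum and $\nu_p(f(c))=d\nu_p(c)\le 0$. Your factored form $(d-i)\left(\frac{r}{d}-v\right)>0$ is a slightly cleaner packaging of the same inequality chain the paper writes out term by term.
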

\begin{proof}
Let $f(x)=\sum_{i=1}^da_ix^i$ be a $p^r$-pure polynomial. We claim that $ \displaystyle \min_{0\leq i\leq d-1}\{\nu_p(a_i)+i\nu_p(c)\}>d\nu_p(c)$, hence $\nu_p\big(f(c)\big)=d\nu_p(c)\le 0$. In order to prove this claim, we note that for any $i$, $0\leq i<d$, $\nu_p(a_i)+i\nu_p(c)\geq \frac{r}{d}(d-i)+i\nu_p(c)=r-\frac{ri}{d}+i\nu_p(c)>r-\frac{ri}{d}+d\nu_p(c)>d\nu_p(c)$, hence the result.   
\end{proof}
\begin{theorem}\label{thm: characterization_of_eventually_pure}
Let $r$ be a positive integer and $p$ be a prime. Suppose $f(x)=a_dx^d+\ldots+a_0 \in \q[x]$ is not $p^r$-pure and $d>r$. Then $f(x)$ is eventually $p^r$-pure if and only if the following conditions hold
\begin{itemize}
    \item[i)] $d=p^m$ for some $m\geq 1$,
    \item[ii)] $f(x)\equiv a_dx^d+a_0 \pmod p$ such that $\nu_p(a_d)=\nu_p(a_0)=0$,
    \item[iii)] $f(x+c)$ is $p^r$-pure for some $c\in \q$.
\end{itemize}
Moreover, the least integer $n>1$ such that $f^n$ is $p^r$-pure is given by
$n=p$ if $a_d\equiv 1 \pmod p$; or $n=\operatorname{ord}_p(a_d)$ otherwise.
\end{theorem}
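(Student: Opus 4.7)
The plan is to prove the equivalence using Theorem~\ref{thm: characterization_of_eventually_p-type_polynomials} (for the $p$-type skeleton of $f$), Lemma~\ref{lemma: If f composed with g is p-type then f(x+g(0)) and g(x)-g(0) is p-type}, Proposition~\ref{fg is pure then f(x+c)is pure}, and Corollary~\ref{cor: Composition of pure with p-type when when r/d<1}. As a convenient auxiliary fact I first record a \emph{Shifting Lemma}: if $g \in \q[x]$ is $p^r$-pure of degree $d>r$ and $\nu_p(\delta) \geq 1$, then $g(x+\delta)$ is $p^r$-pure. This is immediate from Corollary~\ref{cor: Composition of pure with p-type when when r/d<1} since $x+\delta$ is a degree-one $p$-type polynomial.

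For the forward direction, assume $f^N$ is $p^r$-pure for some $N \geq 2$. Exactly as in the opening paragraph of the proof of Theorem~\ref{thm: characterization_of_eventually_p-type_polynomials}, the divisibility $f \mid f^N - f^N(0)$ combined with a Gaussian-valuation argument forces $\nu_p(f) = 0$. I would then rule out the possibility that $f$ is $p$-type: if it were, every iterate would be $p$-type, and Proposition~\ref{fg is pure then f(x+c)is pure} applied to $f^N = f \circ f^{N-1}$ would make $f(x + f^{N-1}(0))$ into a $p^r$-pure polynomial with $\nu_p(f^{N-1}(0)) \geq 1$; the Shifting Lemma (with $\delta = -f^{N-1}(0)$) would then force $f(x)$ itself to be $p^r$-pure, contradicting the hypothesis. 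Hence $f$ is not $p$-type, and Theorem~\ref{thm: characterization_of_eventually_p-type_polynomials} immediately yields conditions (i) and (ii). For (iii), I set $c = f^{N-1}(0)$ and decompose $f^N = F \circ H$ with $F(y) = f(y+c)$ and $H(x) = f^{N-1}(x) - c$, so $H(0)=0$; Lemma~\ref{lemma: If f composed with g is p-type then f(x+g(0)) and g(x)-g(0) is p-type} applied to the parallel decomposition $f^N = f \circ f^{N-1}$ shows $H$ is $p$-type, and Proposition~\ref{fg is pure then f(x+c)is pure} applied to $f^N = F \circ H$ then gives that $F(x) = f(x+c)$ is $p^r$-pure, which is precisely (iii).

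For the backward direction, (i) and (ii) together with Proposition~\ref{prop: which iteration of an eventually p-type is p-type} produce an explicit $n_0 > 1$ such that $f^{n_0}$ is $p$-type; I claim $f^{n_0}$ is in fact $p^r$-pure, which will simultaneously establish eventual $p^r$-purity and identify the minimal iterate. Any $c$ witnessing (iii) satisfies $\nu_p(f(c)) = r > 0$, so $f(c) \equiv 0 \pmod p$; since $\nu_p(a_0) = 0$ forces $\nu_p(c) = 0$, reducing via (ii) and using $d = p^m$ and Fermat's little theorem (giving $c^{p^m} \equiv c \pmod p$) yields $c \equiv -a_0/a_d \pmod p$. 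The same manipulation applied to $f^{n_0}(0) = f(f^{n_0-1}(0)) \equiv 0 \pmod p$, together with Lemma~\ref{lemma: iteration of a p-type mod p}, yields $f^{n_0-1}(0) \equiv -a_0/a_d \pmod p$, so $\nu_p(c - f^{n_0-1}(0)) \geq 1$. The Shifting Lemma then upgrades the $p^r$-purity of $f(x+c)$ to that of $F(y) := f(y + f^{n_0-1}(0))$. Lemma~\ref{lemma: iteration of a p-type mod p} also gives $f^{n_0-1}(x) \equiv a_d^{n_0-1} x^{d^{n_0-1}} + f^{n_0-1}(0) \pmod p$, so $G(x) := f^{n_0-1}(x) - f^{n_0-1}(0)$ is $p$-type; Corollary~\ref{cor: Composition of pure with p-type when when r/d<1} applied to $f^{n_0} = F \circ G$ finishes the claim. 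The ``moreover'' clause is then immediate: the set $\{n : f^n \text{ is } p\text{-type}\}$ equals $n_0 \mathbb{Z}_{\geq 1}$ (by Proposition~\ref{prop: which iteration of an eventually p-type is p-type} together with closure of the $p$-type property under composition), and $p^r$-purity forces $p$-type, so the minimal $p^r$-pure iterate must be at least $n_0$ while $f^{n_0}$ is exhibited as $p^r$-pure.

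The main obstacle I expect is the congruence bookkeeping in the backward direction that matches an arbitrary $c$ satisfying (iii) with the specific value $f^{n_0-1}(0)$ modulo $p$; this is where (ii) is essential (via $\nu_p(a_0) = 0$) to force $c$ into a single residue class, and where $d = p^m$ is essential to linearise the congruence $a_d c^d + a_0 \equiv 0 \pmod p$ through Fermat. A secondary technicality is verifying the hypothesis $\nu_p(f^{N-1}) = 0$ before invoking Lemma~\ref{lemma: If f composed with g is p-type then f(x+g(0)) and g(x)-g(0) is p-type} in the forward direction; this follows from $\nu_p(f) = 0$ (already established) and the elementary fact that the Gaussian valuation of a composition of two polynomials of Gaussian valuation zero is again zero.
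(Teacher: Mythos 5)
Your proof is correct and follows essentially the same route as the paper's: Theorem~\ref{thm: characterization_of_eventually_p-type_polynomials} and Proposition~\ref{fg is pure then f(x+c)is pure} for the forward direction, and Lemma~\ref{lemma: iteration of a p-type mod p}, Proposition~\ref{prop: which iteration of an eventually p-type is p-type} and Corollary~\ref{cor: Composition of pure with p-type when when r/d<1} for the converse and for identifying the minimal iterate. If anything you are more careful than the paper in two spots it glosses over: you explicitly rule out the case that $f$ is itself $p$-type (so that Theorem~\ref{thm: characterization_of_eventually_p-type_polynomials} genuinely applies), and you feed Proposition~\ref{fg is pure then f(x+c)is pure} the decomposition $f^N=F\circ H$ with $H=f^{N-1}-f^{N-1}(0)$, which really is $p$-type, rather than $f\circ f^{N-1}$ whose inner factor need not be; your Fermat-little-theorem computation pinning down $c\bmod p$ is an equivalent substitute for the paper's appeal to Lemma~\ref{cor: negative_valuation}.
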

\begin{proof}
Since $f$ is not $p$-type but eventually $p$-type, it follows that $d=p^m$ for some $m\geq 1$ and $f(x)\equiv a_dx^d+a_0 \pmod p$ such that $\nu_p(a_d)=\nu_p(a_0)=0$, see Theorem \ref{thm: characterization_of_eventually_p-type_polynomials}. Since $f^n(x)=f\left(f^{n-1}(x)\right)$ is $p^r$-pure, therefore by Proposition \ref{fg is pure then f(x+c)is pure}, one has $f\left(x+f^{n-1}(0)\right)$ is $p^r$-pure. 

Conversely, suppose $f$ satisfies conditions i), ii) and iii) in the statement of the theorem. Set $g(x)=f(x+c)$. Since $g$ is $p^r$-pure, it follows that $g$ is $p$-type. Theorem \ref{thm: characterization_of_eventually_p-type_polynomials} together with ii) imply the existence of an $n>1$ such that $f^n$ is $p$-type.  Note that $f^{n-1}(x)\equiv ax^{d^{n-1}}+f^{n-1}(0) \pmod p$ where $\nu_p\left(f^{n-1}(0)\right)=0$, see Lemma~\ref{lemma: iteration of a p-type mod p}. Given that $\nu_p\left(g\left(f^{n-1}(0)-c\right)\right)=\nu_p\left(f^{n}(0)\right)= r>0$  and that $g$ is $p^r$-pure, we must have $\nu_p(f^{n-1}(0)-c)>0$, see Lemma~\ref{cor: negative_valuation}. This implies that $f^{n-1}(x)-c$ is a $p$-type polynomial. Using Corollary~\ref{cor: Composition of pure with p-type when when r/d<1}, $f^n(x)=g\left(f^{n-1}(x)-c\right)$ is $p^r$-pure.
Finally, the value of $n$ is given by Proposition~\ref{prop: which iteration of an eventually p-type is p-type}.
\end{proof}
The previous Theorem gives rise to the following family of dynamically irreducible polynomials.
\begin{corollary}\label{cor: Characterization_of_eventually_Dumas}
Let $r\ge 1$ be an integer and $p$ be a rational prime. 
Let $f(x)=a_dx^d+\ldots+a_0 \in \q[x]$, $d>r$, be such that $f$ is not $p^r$-Dumas.

There is an integer $n\ge 2$ such that $f^n$ is $p^r$-Dumas if and only if the following conditions hold
\begin{itemize}
    \item[i)] $d=p^m$, for some $m\ge 1$,
    \item[ii)] $f(x)\equiv a_dx^{d}+a_0 \pmod p$ with $\nu_p(a_d)=\nu_p(a_0)=0$,
    \item[iii)] $f(x+c)$ is $p^r$-Dumas for some $c\in \q$.
\end{itemize}
\end{corollary}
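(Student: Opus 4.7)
The plan is to reduce the corollary to Theorem~\ref{thm: characterization_of_eventually_pure} by tracking the coprimality condition $\gcd(r,d)=1$ that distinguishes Dumas from pure.

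First I would prove the forward direction. Suppose $f^n$ is $p^r$-Dumas for some $n\ge 2$. In particular $f^n$ is $p^r$-pure, so $f$ is eventually $p^r$-pure. I would first rule out the case that $f$ itself is $p^r$-pure: if it were, then by Corollary~\ref{cor: iterates of a pure polynomial are pure} every iterate $f^n$ is also $p^r$-pure of degree $d^n$, and since $f$ is assumed not to be $p^r$-Dumas we have $\gcd(r,d)>1$, which forces $\gcd(r,d^n)>1$ and contradicts $f^n$ being $p^r$-Dumas. Hence $f$ is not $p^r$-pure, and Theorem~\ref{thm: characterization_of_eventually_pure} applies, yielding conditions i) and ii) directly, together with the existence of $c\in\q$ with $f(x+c)$ being $p^r$-pure. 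Finally, $f(x+c)$ has degree $d$, and from $\gcd(r,d^n)=1$ we deduce $\gcd(r,d)=1$, which upgrades $p^r$-pure to $p^r$-Dumas; this gives iii).

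For the converse, assume i), ii), iii) hold. Then $f(x+c)$ is $p^r$-Dumas, hence $\gcd(r,d)=1$, and in particular $p\nmid r$, so $\gcd(r,d^k)=1$ for every $k\ge 1$. Moreover, $f(x+c)$ being $p^r$-Dumas implies it is $p^r$-pure, so condition iii) of Theorem~\ref{thm: characterization_of_eventually_pure} in the pure version is satisfied, and combined with i) and ii) that theorem yields an integer $n>1$ such that $f^n$ is $p^r$-pure, where $n=p$ if $a_d\equiv 1\pmod p$ and $n=\operatorname{ord}_p(a_d)$ otherwise. Since $f^n$ has degree $d^n$ and $\gcd(r,d^n)=1$, it satisfies iv) of Definition~\ref{def: dumas criterion}, and therefore $f^n$ is $p^r$-Dumas.

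There is no real obstacle here beyond carefully invoking the right result at the right step; the entire content lies in the equivalence ``$p^r$-pure $+$ $\gcd(r,\deg)=1$ $\Longleftrightarrow$ $p^r$-Dumas'' and in the observation that if $d=p^m$ then $\gcd(r,d)=1$ iff $\gcd(r,d^n)=1$ for all $n$, which lets the Dumas hypothesis transfer freely between $f(x+c)$ and $f^n$. The main thing to be careful about is the initial case-split ensuring that $f$ itself cannot be $p^r$-pure under the hypothesis that $f$ is not $p^r$-Dumas but has a $p^r$-Dumas iterate, so that Theorem~\ref{thm: characterization_of_eventually_pure} is legitimately applicable.
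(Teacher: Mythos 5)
Your proposal is correct and follows essentially the same route as the paper, which states this corollary as a direct consequence of Theorem~\ref{thm: characterization_of_eventually_pure} with the bookkeeping of the condition $\gcd(r,d)=1$ (and $\gcd(r,d)=1 \Leftrightarrow \gcd(r,d^n)=1$ since $d=p^m$) left implicit. Your explicit case-split ruling out that $f$ is $p^r$-pure but not $p^r$-Dumas is a sensible precaution; note it also follows at once from condition ii), since $\nu_p(a_0)=0\ne r$ already prevents $f$ from being $p^r$-pure.
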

\begin{example}
Consider the family of polynomials
\begin{equation*}
   f(x)={(x+a)}^{p^m}+b \in \q[x] \text{; }\quad \nu_p(a)\geq 0\text{ and } 1\leq \nu_p(b) < p^m. 
\end{equation*}
In view of Theorem \ref{thm: characterization_of_eventually_pure}, $f(x)$ is eventually $p^r$-pure where $r=\nu_p(b)$. In addition, $f(x)$ is eventually stable over $\q$, see Corollary \ref{cor1}. In fact, if $\gcd\left(p^m,\nu_p(b)\right)=1$, then $f$ is eventually $p^r$-Dumas by Corollary \ref{cor: Characterization_of_eventually_Dumas}, hence $f$ is dynamically irreducible; otherwise, the number of irreducible factors of any iterate of $f$ is at most $\max\{\gcd\left(p^{nm},\nu_p(b)\right): n\geq 1\}$.  
\end{example}
In light of Theorem \ref{thm: characterization_of_eventually_pure} and Corollary \ref{cor: Characterization_of_eventually_Dumas}, it is reasonable to ask the following question.
\begin{question}\label{question: z-shifts_of_eventually_stable}
 If $f\in \z[x]$ is eventually stable (respectively, dynamically irreducible) over $\q$, is $f(x+c)$ eventually stable (respectively, dynamically irreducible) for any $c\in \z$?
\end{question}
The following examples provide a negative answer to the latter question. 
\begin{example}
The polynomial 
\begin{equation*}
f(x)=x^2 + 5x + 5   
\end{equation*} 
is $5$-Eisenstein, hence dynamically irreducible over $\q$. However,  
 $g(x)=f(x-3)=x^2-x-1$ is $3$-newly reducible. More precisely, $g^2(x)$ is irreducible, but $g^3(x)=(x^4 - 3 x^3 + 4 x - 1) (x^4 - x^3 - 3 x^2 + x + 1) $.
\end{example}
\begin{example}\label{ex: shift_eventually_stable}
The polynomial
\begin{equation*}
 f(x)=x^2+8x+12   
\end{equation*}
 is $2^2$-pure, hence eventually stable by Corollary \ref{pure polynomials are eventually stable}. However,  
\begin{equation*}
 f(x-3)=x^2+2x-3   
\end{equation*}
is not eventually stable as it belongs to the family $f_k(x)=x^2+kx-(k+1)\in \z[x]$ which is not eventually stable because $0$ is periodic under $f$, see \cite{jones_2008}.
\end{example}
Now, as an application of Theorem~\ref{thm: characterization_of_eventually_pure}.  we present a family of polynomials that answers Question \ref{question: z-shifts_of_eventually_stable} positively.
\begin{corollary}\label{cor: z-shifts_of_pure_polynomials}
Let $f\in \z[x]$ be a $p^r$-pure (respectively, $p^r$-Dumas) polynomial of degree $p^m>r$. The polynomial $f(x+c)$ is eventually stable (respectively, dynamically irreducible) for all $c\in \z$. In general, if $f\in \q[x]$ is a $p^r$-pure (respectively, $p^r$-Dumas) polynomial of degree $p^m>r$, then $f(x+c)$ is eventually stable (respectively, dynamically irreducible) for all $c \in \q$ with $\nu_p(c)\geq 0$.
\end{corollary}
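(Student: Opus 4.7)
The plan is to set $g(x) := f(x+c)$ and show that $g$ is either $p^r$-pure itself or satisfies the hypotheses (i)--(iii) of Theorem~\ref{thm: characterization_of_eventually_pure}; in either case, eventual stability (respectively, dynamical irreducibility in the Dumas case) will follow from Theorem~\ref{pure polynomials are eventually stable} or Corollary~\ref{cor1}. Since every $c\in \z$ satisfies $\nu_p(c)\ge 0$, it suffices to prove the $\q$-statement, which I split into the two cases $\nu_p(c)>0$ and $\nu_p(c)=0$.

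In the first case I verify directly that $g$ remains $p^r$-pure. Writing $f(x)=\sum_{i=0}^d a_i x^i$, the coefficient of $x^j$ in $g$ equals $\sum_{i=j}^{d}a_i\binom{i}{j}c^{i-j}$. Using $\nu_p(a_i)\ge (d-i)r/d$, $\nu_p(c)\ge 1$, and the degree hypothesis $d>r$ (so $1-r/d>0$), a short estimate yields
\[
\nu_p\!\left(a_i\binom{i}{j}c^{i-j}\right)\ge (d-i)r/d+(i-j)\ge (d-j)r/d\qquad \text{for every } i\ge j,
\]
while the constant term $f(c)$ has valuation exactly $r$: its $a_0$-contribution has valuation $r$ and every other summand has valuation strictly larger than $r$ by the same estimate. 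Hence $g$ is $p^r$-pure and Theorem~\ref{pure polynomials are eventually stable} concludes.

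In the second case, if $g$ is already $p^r$-pure we are done again. Otherwise I apply Theorem~\ref{thm: characterization_of_eventually_pure}: condition (i) holds because $\deg g=p^m$; condition (iii) holds with shift $-c$, since $g(x-c)=f(x)$ is $p^r$-pure by hypothesis; and for condition (ii), since $f$ is $p$-type one has $f(x)\equiv a_dx^{p^m}\pmod{p}$, so the Frobenius identity $(x+c)^{p^m}\equiv x^{p^m}+c^{p^m}\pmod p$ gives
\[
g(x)\equiv a_dx^{p^m}+a_dc^{p^m}\pmod{p},
\]
where both coefficients are units modulo $p$ because $\nu_p(a_d)=\nu_p(c)=0$. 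Theorem~\ref{thm: characterization_of_eventually_pure} then furnishes an $n\ge 1$ with $g^n$ a $p^r$-pure polynomial, and eventual stability follows.

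For the Dumas/dynamical irreducibility refinement, the hypothesis $\gcd(r,d)=\gcd(r,p^m)=1$ implies $\gcd(r,p^{mn})=1$ for every $n\ge 1$, so any $p^r$-pure iterate $g^n$ produced above is automatically $p^r$-Dumas, hence irreducible; Odoni's observation (as invoked in Corollary~\ref{cor1}) then yields the dynamical irreducibility of $g$. The only delicate step is the direct $p^r$-pure verification when $\nu_p(c)\ge 1$, which crucially uses $d>r$ to keep all slopes $\ge r/d$; the remainder is a clean combination of the Frobenius reduction mod~$p$ with the characterization already established in Theorem~\ref{thm: characterization_of_eventually_pure}.
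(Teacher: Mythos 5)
Your proposal is correct and follows essentially the same route as the paper: you split on $\nu_p(c)>0$ versus $\nu_p(c)=0$, show that purity is preserved in the first case (where the paper simply cites Theorem~\ref{thm: Composition_of_pure_and_p-type} applied to the $p$-type polynomial $x+c$, you re-derive that estimate by hand), and verify conditions (i)--(iii) of Theorem~\ref{thm: characterization_of_eventually_pure} (respectively Corollary~\ref{cor: Characterization_of_eventually_Dumas}) in the second. The Frobenius computation you spell out for condition (ii) and the $\gcd(r,p^{mn})=1$ remark for the Dumas refinement are exactly what the paper leaves implicit.
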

\begin{proof}
Let $f \in \q[x]$ be a $p^r$-pure (respectively, $p^r$-Dumas) of degree $p^m>r$ and $c\in \q$. If $\nu_p(c)>0$, then $x+c$ is a $p$-type polynomial and $f(x+c)$ is $p^r$-pure (respectively, $p^r$-Dumas), see Theorem~\ref{thm: Composition_of_pure_and_p-type} (respectively, Corollary \ref{cor: composition of dumas and p-type}). If $\nu_p(c)=0$, then $g(x)=f(x+c)$ satisfies conditions i), ii) and iii) in Theorem~\ref{thm: characterization_of_eventually_pure} (respectively, Corollary~\ref{cor: Characterization_of_eventually_Dumas}) and thus $g$ is eventually $p^r$-pure (respectively, $p^r$-Dumas).   
\end{proof}

 Note that we only dealt with eventually $p^r$-pure polynomials with degree $d>r$. This suggests the following question. 
\begin{question}
If $f$ is a $p^r$-pure polynomial with degree $p^m\leq r$, is $f(x+c)$ eventually $p^r$-pure for any rational $c$ with $\nu_p(c)\geq 0$?
\end{question}
 In fact, the polynomial $f(x)$ in Example \ref{ex: shift_eventually_stable} is a $2^2$-pure polynomial with $\deg f=2<2^2$, yet $f(x-3)$ is not eventually pure as it is not eventually stable.  
 
Next, the following result is an application of Corollary \ref{cor: dumas polynomials are dynamically irreducible} and  Corollary \ref{cor: Characterization_of_eventually_Dumas}. 
\begin{corollary}\label{brown application}
Let $p$ be a prime. Suppose $f,g$ are monic polynomials in $\q[x]$ such that $\overline{g}$ is the reduction of $g$ modulo $p$ and $\operatorname{deg}(g)=\operatorname{deg}(\overline{g})=e$.
If $f$ is eventually $p^r$-Dumas for some iteration $n\geq 1$, $\overline{g}$ is irreducible in $\mathbb{F}_p[x]$ and $\operatorname{gcd}\left(e,r\right)=1$, then $f^{kn}\circ g$ is irreducible in $\q[x]$ for all $k\geq 1$. In addition, if $\overline{g}$ is dynamically irreducible in $\mathbb{F}_p[x]$, then $f^{kn}\circ g^m$ is irreducible in $\q[x]$ for all $k,m\geq 1$ and $f^{kn}$ is $g$-stable for all $k\geq 1$ in $\q[x]$.
\end{corollary}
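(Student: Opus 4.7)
The plan is to reduce irreducibility of $f^{kn}\circ g$ to the irreducibility of $g(x)-\gamma$ over $\q(\gamma)$ (for $\gamma$ a root of $f^{kn}$) via Capelli's Lemma, and then to verify this irreducibility by passing to the completion $\q_p(\gamma)$ and reducing modulo its maximal ideal. First I would observe that iterating Corollary~\ref{cor: dumas polynomials are dynamically irreducible}, the polynomial $f^{kn}=(f^n)^k$ is $p^r$-Dumas of degree $d^{kn}$, where $d=\deg f$. In particular $\gcd(r,d^{kn})=1$ and $f^{kn}$ is irreducible over $\q$, so by Capelli's Lemma it suffices to show that $g(x)-\gamma$ is irreducible over $\q(\gamma)$ for some fixed root $\gamma$ of $f^{kn}$.

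The key step is a Newton polygon analysis at $p$. Because $f^{kn}$ is $p^r$-pure, every root $\gamma$ in the algebraic closure of $\q_p$ satisfies $\nu_p(\gamma)=r/d^{kn}$; combined with $\gcd(r,d^{kn})=1$, the ramification index of $\q_p(\gamma)/\q_p$ is at least $d^{kn}$, and since it divides $d^{kn}$ it equals $d^{kn}$. Hence $f^{kn}$ is already irreducible over $\q_p$, and $\q_p(\gamma)/\q_p$ is totally ramified of degree $d^{kn}$, in particular it has residue field $\mathbb{F}_p$.

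Now I would exploit the hypothesis that $\bar g\in\mathbb{F}_p[x]$ is irreducible of degree $e=\deg g$. Since $g$ is monic with $\deg g=\deg\bar g$, we have $g\in\mathbb{Z}_{(p)}[x]$, so $g(x)-\gamma\in\mathcal{O}_{\q_p(\gamma)}[x]$ is monic of degree $e$. Reducing modulo the maximal ideal of $\mathcal{O}_{\q_p(\gamma)}$ and using $\nu_p(\gamma)=r/d^{kn}>0$, the reduction equals $\bar g(x)\in\mathbb{F}_p[x]$, which is irreducible by hypothesis. The standard criterion (reduction of a monic polynomial is irreducible $\Rightarrow$ polynomial is irreducible) then gives that $g(x)-\gamma$ is irreducible over $\q_p(\gamma)$, hence over the subfield $\q(\gamma)$, which is exactly what Capelli's Lemma requires.

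For the second half, I would observe that $g\equiv\bar g\pmod p$ implies $g^m\equiv\bar g^m\pmod p$ for every $m\ge1$, so $\overline{g^m}=\bar g^m$ is irreducible in $\mathbb{F}_p[x]$ of degree $e^m=\deg(g^m)$ by the dynamical irreducibility assumption on $\bar g$. Since $\gcd(e,r)=1$ forces $\gcd(e^m,r)=1$, the hypotheses of the first half apply with $g$ replaced by $g^m$, yielding the irreducibility of $f^{kn}\circ g^m$ for all $k,m\ge1$, which is precisely the assertion that each $f^{kn}$ is $g$-stable. The main technical obstacle is the Newton polygon step used to conclude that $\q_p(\gamma)$ has residue field $\mathbb{F}_p$; everything else is a clean reduction-modulo-maximal-ideal argument once the ramification behavior is pinned down.
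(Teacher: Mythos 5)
Your proof is correct, but it takes a genuinely different route from the paper's. The paper proves this corollary by writing $f^{kn}\circ g$ in its $g$-expansion $g(x)^{D}+a_{D-1}g(x)^{D-1}+\cdots+a_0$ (the $a_i$ being the coefficients of $f^{kn}$) and invoking the generalized Sch\"{o}nemann criterion, Lemma \ref{special case of brown}, imported from \cite{bishnoi}: conditions (b)--(d) hold because $f^{kn}=(f^n)^k$ is $p^r$-Dumas by Corollaries \ref{cor: dumas polynomials are dynamically irreducible} and \ref{cor: Characterization_of_eventually_Dumas}, and condition (a) is the irreducibility of $\overline{g}$. You instead combine Capelli's Lemma with a local analysis: purity of $f^{kn}$ forces every root $\gamma$ to have valuation $r/d^{kn}$ with $\gcd(r,d^{kn})=1$, so $\q_p(\gamma)/\q_p$ is totally ramified with residue field $\mathbb{F}_p$, and the monic polynomial $g(x)-\gamma$ reduces to the irreducible $\overline{g}$ modulo the maximal ideal, hence is irreducible over $\q_p(\gamma)$ and a fortiori over $\q(\gamma)$. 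In effect you have re-proved, in the special case needed here, the criterion the paper cites as a black box; what your version buys is a self-contained argument that makes the mechanism transparent (purity controls ramification, $\overline{g}$ controls the residue extension) and, as a by-product, shows that the hypothesis $\gcd(e,r)=1$ plays no role in the first assertion --- it is likewise unused in the paper's proof, since condition (d) of Lemma \ref{special case of brown} concerns $\gcd(r,\deg f^{kn})$ rather than $\gcd(r,e)$. One small imprecision to fix: the ramification index divides $[\q_p(\gamma):\q_p]$, which is a priori only bounded above by $d^{kn}$ rather than a divisor of it; the correct closing step is the sandwich $d^{kn}\le e\bigl(\q_p(\gamma)/\q_p\bigr)\le[\q_p(\gamma):\q_p]\le d^{kn}$. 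Your treatment of the second assertion via $\overline{g^m}=\overline{g}^{\,m}$ matches the paper's.
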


The proof of the previous Corollary depends on a special case of the generalized Sch{\"o}nemann polynomial discussed in \cite{bishnoi}. We present this special case as a lemma.
\begin{lemma}\label{special case of brown}
 Let $A$ and $g$ be polynomials in $\q[x]$. Assume that the $g$-expansion of the polynomial $A$ in $\q[x]$ is given by
 \begin{equation*}
    A=a_d{g}^d+\ldots+a_1g+a_0.
 \end{equation*}
for some $a_0,\ldots,a_d\in \q[x]$. Suppose there exists a prime $p$ such that:
\begin{enumerate}[label=(\alph*)]
    \item The reduction of $g$ modulo $p$ is irreducible over $\mathbb{F}_p$,
    \item $a_d(x)=1$,
    \item $\frac{\nu_p(a_i)}{d-i} \geq \frac{\nu_p(a_0)}{d}>0$ for all $1 \leq i\leq d-1$,
    \item $\operatorname{gcd}\left(\nu_p(a_0),d\right)=1$.
\end{enumerate}
Then $A$ is irreducible in $\q[x]$. 
\end{lemma}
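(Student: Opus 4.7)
The plan is to reduce to showing $A$ is irreducible over $\q_p[x]$, which implies irreducibility over $\q[x]$ since any nontrivial $\q$-factorization would also be a nontrivial $\q_p$-factorization. I will pick any root $\alpha \in \overline{\q_p}$ of $A$ and prove that $[\q_p(\alpha):\q_p] = \deg A = de$ (where $e = \deg g$), which rules out any nontrivial factorization of $A$ over $\q_p$.

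Hypothesis (a), combined with Hensel's lemma, shows that $g$ (which we may assume monic, since (a) forces its leading coefficient to be a $p$-adic unit) is irreducible over $\q_p$; the associated extension $L = \q_p(\theta)$ with $g(\theta) = 0$ is the unramified extension of degree $e$, with $p$ as uniformizer and residue field $\mathbb{F}_{p^e}$. Conditions (b) and (c) together yield $\nu_p(a_i) > 0$ for every $i < d$, so $A \equiv g^d \pmod p$. For any root $\alpha$ of $A$ we then have $\bar g(\bar\alpha) = 0$, hence $\bar\alpha$ generates $\mathbb{F}_{p^e}$ over $\mathbb{F}_p$, and the residue field of $\q_p(\alpha)$ contains a degree-$e$ extension of $\mathbb{F}_p$.

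The heart of the argument is to prove the ramification index of $\q_p(\alpha)/\q_p$ is at least $d$. Set $\beta := g(\alpha) \in \q_p(\alpha)$; the identity $A(\alpha) = 0$ becomes
\begin{equation*}
\beta^d + a_{d-1}(\alpha)\beta^{d-1} + \cdots + a_1(\alpha)\beta + a_0(\alpha) = 0,
\end{equation*}
and I claim $w(\beta) = \nu_p(a_0)/d$ exactly, where $w$ denotes the unique extension of $\nu_p$ to $\overline{\q_p}$. The main obstacle is pinning down $w(a_0(\alpha))$ with equality rather than the crude bound $w(a_0(\alpha)) \geq \nu_p(a_0)$. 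For this, write $a_0 = p^{\nu_p(a_0)}\tilde a_0$ with $\tilde a_0 \in \z_{(p)}[x]$ satisfying $\nu_p(\tilde a_0) = 0$; the standing convention $\deg a_0 < e$ built into the $g$-adic expansion forces $\deg \bar{\tilde a_0} < e = \deg \bar g$, so $\bar g \nmid \bar{\tilde a_0}$, and hence $\bar{\tilde a_0}(\bar\alpha) \neq 0$ because $\bar\alpha$ is a root of the irreducible $\bar g$. This yields $w(\tilde a_0(\alpha)) = 0$, and consequently $w(a_0(\alpha)) = \nu_p(a_0)$. Combined with the weaker bounds $w(a_i(\alpha)) \geq \nu_p(a_i) \geq (d-i)\nu_p(a_0)/d$ supplied by (c), a standard Newton-polygon argument applied to the displayed vanishing sum --- the minimum valuation among the summands must be attained at least twice --- then pins down $w(\beta) = \nu_p(a_0)/d$.

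Condition (d) finally furnishes, via Bezout, integers $s, t$ with $s\nu_p(a_0) + td = 1$; hence $sw(\beta) + t\cdot w(p) = 1/d$ lies in $w(\q_p(\alpha)^\times)$, so the ramification index of $\q_p(\alpha)/\q_p$ is divisible by $d$. Together with the residue-degree bound of at least $e$, this gives $[\q_p(\alpha):\q_p] \geq de$. Since $\alpha$ is a root of $A$ of degree $de$, equality holds, no nontrivial factorization of $A$ over $\q_p$ is possible, and the lemma follows.
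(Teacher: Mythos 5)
Your proof is correct. Note, however, that the paper gives no internal proof of this lemma at all: it is quoted as a special case of the generalized Sch\"onemann criterion and attributed to the cited reference, so your argument is a self-contained substitute rather than a variant of an argument in the paper. Your route is the standard local-field proof of such Eisenstein--Dumas/Sch\"onemann statements: pass to $\q_p$, use (a) and Hensel to identify $\q_p[x]/(g)$ with the unramified extension of degree $e$, read off residue degree $\geq e$ from $\bar A=\bar g^{\,d}$, and extract ramification index divisible by $d$ from the exact value $w(g(\alpha))=\nu_p(a_0)/d$, with (d) entering through B\'ezout. The one genuinely delicate step --- upgrading $w(a_0(\alpha))\geq\nu_p(a_0)$ to an equality by writing $a_0=p^{\nu_p(a_0)}\tilde a_0$ and using $\deg\tilde a_0<\deg g$ together with the irreducibility of $\bar g$ to get $\bar{\tilde a_0}(\bar\alpha)\neq 0$ --- is exactly right, and your two-sided Newton-polygon estimate then correctly forces $w(\beta)=\nu_p(a_0)/d$. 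Two small points worth making explicit: the argument needs $\deg\bar g=\deg g$ (equivalently, that the leading coefficient of $g$ is a $p$-adic unit); this is implicit in the notion of $g$-expansion and holds in the paper's application where $g$ is monic, but it does not literally ``follow from (a)'' as you assert, so it should be recorded as a standing hypothesis. You also tacitly use that $\alpha$ is integral (so that $\bar\alpha$ exists and $w(a_i(\alpha))\geq\nu_p(a_i)$), which is immediate since $A$ is monic with $p$-integral coefficients but deserves a sentence.
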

 We remark that if we force $a_0,\ldots,a_d$ to be constant polynomials and $g(x)=x$, we deduce the monic case of Definition \ref{def: dumas criterion}.
\begin{proof}[Proof of Corollary \ref{brown application}]
Assume that  $f^n(x)=x^d+\ldots+a_0$ is $p^r$-Dumas for some iterate $n\geq 1$. We write 
\begin{equation*}
 A(x)=f^n\left(g(x)\right)= g(x)^d+a_{d-1}g(x)^{d-1}+\ldots+a_1g(x)+a_0.   
\end{equation*}
By assumption, the polynomial $f^k\circ g$ satisfies the conditions in Lemma \ref{special case of brown}, hence $A(x)$ is irreducible in $\q[x]$. If $k=1$ (respectively, $k>1$), then by Corollary \ref{cor: dumas polynomials are dynamically irreducible} (respectively, Corollary \ref{cor: Characterization_of_eventually_Dumas}), the polynomials $f^n$ (respectively, $f^{kn}$) are $p^r$-Dumas for all $n\geq 1$. It follows by Lemma~\ref{special case of brown}, $f^n\circ g$ (respectively, $f^{kn}\circ g$) is irreducible for all $n\geq 1$. Finally, if $g$ is dynamically irreducible over $\mathbb{F}_p$, then, $f^n\circ g^m$ (respectively, $f^{kn}\circ g^m$) is irreducible for any $n,m\geq 1$.    
\end{proof}
The following is another result that combines irreducibility over finite fields and irreducibility over number fields.
\begin{corollary}\label{finite, number and iteration}
Let $f$ be a $p^r$-Dumas polynomial, $r\geq 1$, and let $\alpha$ be a root of $f$.
Let $g\in \q[x]$ be such that the reduction $\overline{g}$ of $g$ modulo $p$ is irreducible over $\mathbb{F}_p$. Then $g(x)-\alpha$ is irreducible over the number field $\q(\alpha)$. 
\end{corollary}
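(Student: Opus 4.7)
The plan is to deduce the statement from Capelli's Lemma \cite[Lemma 0.1]{Capelli} combined with the generalized Sch\"onemann criterion of Lemma~\ref{special case of brown}. First, after replacing $f$ by $a_d^{-1} f$ (which has the same roots as $f$ and remains $p^r$-Dumas, since $\nu_p(a_d)=0$), I may assume $f$ is monic. This reduction does not alter the field $\q(\alpha)$, so the conclusion is unchanged.

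Next, I would show that $F(x) := f(g(x))$ is irreducible over $\q$ by a single application of Lemma~\ref{special case of brown} to its $g$-expansion
\[
F(x) = g(x)^d + a_{d-1}\, g(x)^{d-1} + \ldots + a_1\, g(x) + a_0,
\]
viewing $a_0, \ldots, a_{d-1} \in \q$ as constant polynomials. The four hypotheses of the lemma are immediate from the $p^r$-Dumas property of $f$: (a) $\overline{g}$ is irreducible over $\Ff_p$ by assumption; (b) the leading $g$-coefficient is $1$ by the monic normalization; (c) $\nu_p(a_i)/(d-i) \geq r/d = \nu_p(a_0)/d > 0$ for $1 \leq i \leq d-1$ by the Dumas slope condition; (d) $\gcd(\nu_p(a_0), d) = \gcd(r, d) = 1$, again by the Dumas criterion. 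Hence $f \circ g$ is irreducible over $\q$.

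Finally, Capelli's Lemma states that $f \circ g$ is irreducible over $\q$ if and only if both $f$ is irreducible over $\q$ and $g(x) - \alpha$ is irreducible over $\q(\alpha)$. Since a $p^r$-Dumas polynomial is irreducible, the first factor is automatic, and the previous step supplies the irreducibility of $f \circ g$. Therefore $g(x) - \alpha$ is irreducible over $\q(\alpha)$, as required.

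I do not expect a genuine obstacle here: the entire content of the proof is that the $p^r$-Dumas conditions on the coefficients of $f$ translate verbatim into the hypotheses of Lemma~\ref{special case of brown} applied to the $g$-expansion of $f \circ g$. The only subtle point is that Lemma~\ref{special case of brown} requires a monic leading coefficient in the $g$-expansion, which is why the initial normalization by $a_d^{-1}$ is needed before the composition step.
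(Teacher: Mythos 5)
Your proposal is correct and follows the same route as the paper: apply Lemma~\ref{special case of brown} to the $g$-expansion of $f\circ g$ to get irreducibility over $\q$, then invoke Capelli's Lemma to pass to $g(x)-\alpha$ over $\q(\alpha)$. The only difference is that you spell out the verification of the four hypotheses and the (harmless, valuation-preserving) monic normalization of $f$, which the paper leaves implicit.
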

\begin{proof}
 By Lemma \ref{special case of brown}, $f\circ g$ is irreducible over $\q$. By Capelli's Lemma, \cite[Lemma 1]{Ayad2000}, if $\alpha$ is a root of $f$, then the polynomial $g(x)-\alpha$ must be irreducible over $\q(\alpha)$.  
\end{proof}
We end this section with the following example.
\begin{example}
Consider the polynomial
\begin{equation*}
    g(x)=x^2+1\in \q[x].
\end{equation*}
We consider the polynomial $\overline{g}$ in $\mathbb{F}_3[x]$. Since $-\overline{g}(\gamma)=2=\overline{g}^n(\gamma)$ for all $n\geq 2$, and $2$ is a nonsquare in $\mathbb{F}_3$, it follows that $\overline{g}$ is dynamically irreducible over $\mathbb{F}_3$, see Proposition \ref{quadratic stable}. According to Corollary \ref{brown application}, if $f\in \q[x]$ is a monic  $3^r$-Dumas polynomial for some $r\geq 1$, then $f^n\circ g^m \in \q[x]$ is irreducible for all $n,m\geq 1$. Moreover, if $f$ is eventually $3^r$-Dumas, then by Theorem \ref{thm: characterization_of_eventually_pure}, the iterates $f^{kp}$ are $3^r$-Dumas for all $k\geq 1$. It follows again by Corollary \ref{brown application} that $f^{kp}\circ g^m$ is irreducible over $\q$.    
\end{example}

\bibliographystyle{alpha}
\bibliography{ref}

\end{document}